\newcounter{dummy} \numberwithin{dummy}{section}
\newtheorem{theorem}[dummy]{Theorem}
\newtheorem{corollary}[dummy]{Corollary}
\newtheorem{lemma}[dummy]{Lemma}
\newtheorem{definition}[dummy]{Definition}
\newtheorem{proposition}[dummy]{Proposition}
\theoremstyle{remark}
\newtheorem{remark}[dummy]{Remark}
\newcommand{\calL}{\mathcal{L}}
\newcommand\E{\mathbb{E}}
\DeclareMathOperator{\End}{End}
\DeclareMathOperator{\Dom}{Dom}
\DeclareMathOperator{\id}{id}
\DeclareMathOperator{\pr}{pr}
\DeclareMathOperator{\rank}{rank}
\DeclareMathOperator{\spn}{span}
\DeclareMathOperator{\tr}{tr}
\DeclareMathOperator{\Ric}{Ric}
\newcommand{\ptr}{/ \! /}
\newcommand{\hptr}{/ \! \hat{/}}
\DeclareMathOperator{\SO}{SO}
\DeclareMathOperator{\so}{\mathfrak{so}}
\DeclareMathOperator{\bnabla}{\boldsymbol{\nabla}}
\DeclareMathOperator{\Dev}{Dev}
\DeclareMathOperator{\gap}{gap}
\DeclareMathOperator{\e}{e}
\newcommand{\FC}{\mathscr{F}C^\infty}
\newcommand{\FCc}{\mathscr{F}C^\infty_0}
\newcommand{\ve}{\varepsilon}
\def\fatdot{{\text{\LARGE.}}}
\numberwithin{equation}{section}
\title[Functional inequalities on path space of sub-Riemannian manifolds]{Functional inequalities on path space of sub-Riemannian manifolds and applications}
\author[L.J.~Cheng, E.~Grong and A.~Thalmaier]{Li-Juan Cheng\textsuperscript{1,2}, Erlend Grong\textsuperscript{3} and Anton Thalmaier\textsuperscript{1}}
  \address{\textsuperscript{1}University of Luxembourg, Department of Mathematics, 
Maison du Nombre, L--4364 Esch-sur-Alzette, Luxembourg}
  \address{\textsuperscript{2}Department of Applied Mathematics, Zhejiang University of Technology,
    Hangzhou 310023, The People's Republic of China}
  \address{\textsuperscript{3}University of Bergen, Department of Mathematics, P.O.~Box 7803, 5020 Bergen, Norway}
  \email{lijuan.cheng@uni.lu \text{\rm and} chenglj@zjut.edu.cn}
  \email{erlend.grong@gmail.com}
  \email{anton.thalmaier@uni.lu}
\subjclass[2010]{60J60, 58J65, 53C17}
\keywords{Bounded Ricci curvature, path space, functional inequalities, sub-Riemannian geometry}
\begin{document}

\begin{abstract}
For sub-Riemannian manifolds with a chosen complement, we first establish the derivative formula and integration by parts formula on path space  with respect to a natural gradient operator. By using these formulae, we then show that upper and lower bounds of the horizontal Ricci curvature correspond to functional inequalities on path space analogous to what has been established in Riemannian geometry by Aaron Naber, such as gradient inequalities, log-Sobolev and Poincar\'{e} inequalities.
\end{abstract}

\maketitle


\section{Introduction}
Stochastic analysis on the path space over a complete Riemannian manifold
 has been well developed ever since B. K. Driver
\cite{Dri92} proved the quasi-invariance theorem for the Brownian motion
on compact Riemannian manifolds in 1992. A key point of the study is to first
establish an integration by parts formula for the associated gradient operator
induced by the quasi-invariant
flows, then prove functional inequalities
for the corresponding Dirichlet form (see e.g. \cite{Fang,Hsu97} and references within). For more analysis on Riemannian
path spaces we refer to \cite{Elworthy-Li,Malliavin,Stroock} and references within. Recently, there has been an extensive study by A.~Naber \cite{Naber} on the equivalence of bounded Ricci curvature and certain inequalities on path space. R.~Haslhofer and A.~Naber \cite{HN} extended these
results to characterize solutions of the Ricci flow, see also
\cite{HN1}.

In the present article, we develop this formalism in the framework of hypoelliptic operators and diffusions in sub-Riemannian geometry. Let $(M,H,g)$ be a sub-Riemannian manifold, meaning that $H$ is a subbundle of $TM$ with a metric tensor~$g$. Let $\nabla$ be an affine connection on $TM$ compatible with $(H,g)$ in the sense that it preserves $H$ and its metric~$g$ under parallel transport. We define an operator
\begin{equation} \label{LH} L =  \tr_H \nabla^2_{\times, \times},\end{equation}
as the trace of the Hessian $\nabla^2$ over $H$ with respect to the inner product $g$.
We assume that the subbundle $H$ is bracket-generating, meaning that its sections and their iterated brackets span the entire tangent bundle. This makes $L$ into a hypo\-elliptic operator on functions by H\"ormander's theorem \cite{Hor67}. Let $B_t^x$ be a standard Brownian motion in the inner product space $H_x$. Then the solution of the SDE,
\begin{equation*}  dX_t^x = \ptr_t \circ dB_t^x, \quad X_0^x = x\end{equation*}
is a diffusion on $M$ with $\frac{1}{2} L$ as infinitesimal generator, with $\ptr_t\colon T_x M \to T_{X_t^x}M$ denoting $\nabla$-parallel transport along $X_t^x$. We note that for the case when $H = TM$ and $\nabla$ is the Levi-Civita connection, the operator $L$ is the Laplacian and $X_t^x$ is the Brownian motion in $M$.

The analysis of path space of sub-Riemannian manifolds has been earlier considered in \cite{BF15,BGF17} for the case where the sub-Riemannian structure $(H,g)$ is the restriction to the transverse bundle of a foliation that is Riemannian, totally geodesic and of Yang-Mills type. In this present paper, we will generalize the approach in \cite{BGF17} to arbitrary sub-Riemannian manifolds with a metric preserving complement, which include sub-Riemannian manifolds coming from Riemannian foliations, but does not require anything of the metric along the foliation or even any extension of the sub-Riemannian metric.

The derivative formula and integration by parts formula established in this paper correspond to a generalization of the gradient in \cite{BGF17} to a more general class of sub-Riemannian manifolds. This gradient is defined in terms of a connection $\nabla$ compatible with the sub-Riemannian structure which is canonical in the sense that any choice of complement $V$ to the subbundle $H$ determines it uniquely. To motivate the reasonability of the definition, we first review the smooth path space and the development map with respect to an arbitrary connection. The underlying idea is that if we have a variation of curves $\{ \gamma^s\}$ that are all tangent to~$H$, then the corresponding variational vector field $Y=\partial_s \gamma^s|_{s=0}$ will not be in $H$ in general, yet it can not be arbitrary in the sense that is determined by $\pr_H Y$ for any choice of projection $\pr_H\colon TM \to H$. We will construct the sub-Riemannian gradient on path space to reflect this property.

Our formula for the damped gradient appears more similar to the definition in Riemannian geometry, however, it uses the adjoint connection $\hat \nabla$ of $\nabla$ which will not be compatible with the sub-Riemannian structure. We will show that there are nice formulas relating the two operators with the gradient operator defined by the adjoint connection,  which help us to establish the derivative and integration by parts formulae for both the gradient and the damped gradient.

Having set up this formalism, we extend the approach of Naber to the sub-Riemannian case in our main result in Theorem~\ref{main-th2}. We establish  functional inequalities on the path space of the stochastic flow $x \mapsto X_t^x$ including gradient inequalities, log-Sobolev inequalities and Poincar\'{e} inequalities. These inequalities are shown to be equivalent to bounds on \emph{the horizontal Ricci operator} $\Ric_H\colon H \to H$ which is defined taking the trace  of the curvature tensor only over $H$. We want to emphasize that it is quite surprising that we can establish almost identical relations between bounded Ricci curvature and functional inequalities in the sub-Riemannian path space as in the Riemannian case. By contrast, the relationship between lower Ricci curvature bounds and functional inequalities for the heat semigroup is much more complicated in the sub-Riemannian case compared to the Riemannian one, see e.g. \cite{BaGa17,BBG,BKW16,GT16a,GT16b} for details.

The structure of the paper is as follows. In Section~\ref{sec:Smooth} we first consider the smooth path space and development with respect to an arbitrary connection. We review the basic definitions of sub-Riemannian manifolds and connections compatible with such structures. Unlike in the Riemannian case, we do not have torsion-free compatible connections on such spaces, however, we give analogues of the Levi-Civita connection by defining a canonical connection with minimal torsion relative to a chosen complement $V$ to the horizontal bundle $H$. We finally use these connections to define corresponding vector fields on smooth path space.

We generalize the definition of these vector fields in Section~\ref{sec:Gradients} in order to define a gradient and a damped gradient for functions on path space. We relate these concepts and look at their properties in Theorems~\ref{IBPD},~\ref{IBPtildeD} and \ref{Dev-Intr-formulas}. In particular, we establish integration by parts formulas for both the gradient and the damped gradient, generalizing the Riemannian case and the case treated in \cite{BGF17}. Finally, in Section~\ref{sec:BoundCurv}, we show that several functional inequalities related to functions on path space are equivalent to the analogue of bounded Ricci curvature. We state our main result in Theorem~\ref{main-th2}. From this result, we also obtain a spectral gap estimate in Corollary~\ref{cor} for the Ornstein-Uhlenbeck operator corresponding to the gradient.

In Section~\ref{sec:Geometry}, we look closer at how such results can be interpreted geometrically. Intuitively, we show that if one uses the canonical connection $\nabla$ corresponding to a metric preserving complement $V$, then the sub-Riemannian path space has geometry ``similar to $M/V$''. This latter concept is well defined in the case when $V$ is an integrable submanifold corresponding to a regular foliation $\Phi$ in which $M/\Phi$ has an induced Riemannian structure, but our formalism is valid for non-integrable choices of complements as well.

For the main results of this paper, we need to choose a complement which is metric preserving. To explain the reasons behind this assumptions for this choice and for later references, we include some formulas related to a general choice of connection and complement in Appendix~\ref{sec:Any}, which show the additional complications that appear if the complement is not metric preserving.

\

\paragraph*{\bf Acknowledgements}
The first author was supported by the National Natural Science
Foundation of China (Grant No. 11901604), the second author 
in part by the Research Council of Norway (project number 249980/F20),
and the third author by the Fonds National
de la Recherche Luxembourg (Open project O14/7628746 GEOMREV).
We thank Fabrice Baudoin for interesting discussions regarding the topic.

\section{Smooth path space and sub-Riemannian geometry} \label{sec:Smooth}
\subsection{Smooth path space and development} \label{sec:SmoothPath}
An affine manifold is a pair $(M, \nabla)$ where $\nabla$ is an affine connection on $TM$. Let $\mathbf{T}$ denote the torsion of $\nabla$, i.e.
$$\mathbf{T}(Y,Z) = \nabla_Y Z - \nabla_Z Y - [Y,Z], \quad Y,Z \in \Gamma(TM),$$
and let $\mathbf{R}$ denote its curvature
$$\mathbf{R}(Y_1, Y_2) Z = \left( \nabla_{Y_1} \nabla_{Y_2} - \nabla_{Y_2} \nabla_{Y_1} - \nabla_{[Y_1,Y_2]} \right) Z, \quad Y_1, Y_2, Z \in \Gamma(TM).$$
We define its \emph{adjoint} $\hat \nabla$ as the connection
\begin{equation} \label{Adjoint}
\hat \nabla_Y Z = \nabla_Y Z - \mathbf{T}(Y,Z).
\end{equation}
Observe that the torsion of $\hat \nabla$ is $- \mathbf{T}$ and hence $\nabla$ is the adjoint of $\hat \nabla$. We remark that if $(s,t) \mapsto \omega_t^s$ is a two-parameter function with values in $M$, then
\begin{equation} \label{NablaHatNabla}
D_s \frac{\partial}{\partial t} \omega^s_t = \hat D_t \frac{\partial}{\partial s} \omega^s_t,
\end{equation}
where $D_s$ and $\hat D_t$ denote covariant derivatives of respectively $\nabla$ in the direction of~$s$ and $\hat \nabla$ in the direction of $t$.

Let $W^\infty_x(M)$ denote the space of smooth curves 
$[0, \infty) \to M$, $t \mapsto \omega_t$
satisfying $\omega_0 = x$. When $\ptr_t\colon T_x M \to T_{\omega_t} M$ denotes parallel transport with respect to $\nabla$ along a given path $\omega \in W^\infty_x(M)$, we say that $u \in W_0^\infty(T_xM)$ is \emph{the anti-development} of $\omega_t$ if it is the unique solution of
\begin{equation} \label{omegau} \dot u_t = \ptr_t^{-1} \dot \omega_t, \quad u_0 = 0,\end{equation}
with $\dot u_t =\frac{d}{dt} u_t$.
Conversely, we say that $\omega$ is \emph{the development} of $u$. We write $\Dev(u) = \omega$ and $\Dev^{-1}(\omega) = u$. We note that $\Dev^{-1}$ is defined for any element in $W^\infty_x(M)$, however, for a general $u \in W_{0}^\infty(T_x M)$,  $t\mapsto \Dev(u)_t$ might be only defined for short time. If $\omega_t =\Dev(u)_t$ is defined for all time for any $u \in W^\infty_0(T_xM)$, $x \in M$, then $\nabla$ is called \emph{complete}. For the rest of this subsection, we assume that $\nabla$ is complete. For the general case, see Remark~\ref{re:non-complete}. The next lemma describes the derivative of $\Dev$.
\begin{lemma} \label{lemma:SmoothPathSpace_0}
Let $\omega \in W^{\infty}_x(M)$ be an arbitrary smooth curve with $\Dev^{-1}(\omega) = u$. Consider $\omega^s_t = \Dev(u+ sk)_t$ for $k \in W^\infty_0(T_xM)$ and define
\begin{equation} \label{DerY} Y_t = \frac{\partial}{\partial s} \omega^s_t |_{s=0}.\end{equation}
Write $\ptr_t, \hptr_t\colon T_x M \to T_{\omega_t} M$ for parallel transport along $\omega$ relative to respectively $\nabla$ and $\hat \nabla$. If we write
$$Y_t = \ptr_t y_t = \hptr_t \hat y_t$$
with $\hat y_t = \hptr_t^{-1} \ptr_t y_t$, then $y_t$ and $\hat y_t$ are the unique solutions of
\begin{align*}
k_t & =  y_t + \int_0^t \mathbf{T}_{\ptr_s}(y_s, du_s) - \int_0^t\int_0^s \mathbf{R}_{\ptr_r} ( du_r , y_r) du_s \\
& = \int_0^t \ptr_s^{-1} \hptr_s \dot {\hat y}_s \, ds  - \int_0^t\int_0^s \mathbf{R}_{\ptr_r} ( du_r , y_r) du_s.
\end{align*}
\end{lemma}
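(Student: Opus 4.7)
The plan is to differentiate the anti-development equation $\dot\omega^s_t = \ptr^s_t(\dot u_t + s\dot k_t)$ in $s$ at $s = 0$ (with $\ptr^s_t$ denoting $\nabla$-parallel transport along $\omega^s$), and then to exchange $D_s$ and $D_t$ using the two commutator identities for the pullback connection applied to $f(s,t) = \omega^s_t$: the torsion identity $D_t \partial_s f - D_s \partial_t f = \mathbf{T}(\partial_t f, \partial_s f)$ and the adjoint identity (\ref{NablaHatNabla}), $D_s \partial_t f = \hat D_t \partial_s f$.

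The main technical step is the computation of $D_s \dot\omega^s_t|_{s=0}$. Splitting $\dot\omega^s_t = \ptr^s_t \dot u_t + s\,\ptr^s_t \dot k_t$ and evaluating the derivative at $s = 0$ gives
\[
D_s \dot\omega^s_t|_{s=0} = W_t(\dot u_t) + \ptr_t \dot k_t, \qquad W_t(v) := D_s \ptr^s_t v\big|_{s=0}, \quad v \in T_xM.
\]
To identify $W_t(v)$, I would let $V^s_t = \ptr^s_t v$ (so that $D_t V^s = 0$ and $V^s_0 = v$) and apply the curvature identity $D_s D_t V - D_t D_s V = \mathbf{R}(\partial_s f, \partial_t f) V$. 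This produces the linear ODE $D_t W_t(v) = \mathbf{R}(\dot\omega_t, Y_t)\, \ptr_t v$ with $W_0(v) = 0$; writing $W_t(v) = \ptr_t w_t$ and integrating yields
\[
W_t(v) = \ptr_t \int_0^t \mathbf{R}_{\ptr_r}(du_r, y_r)\, v .
\]

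For the first stated identity, I would apply the torsion commutator to obtain $\ptr_t \dot y_t = D_t Y_t = D_s \dot\omega^s_t|_{s=0} + \mathbf{T}(\dot\omega_t, Y_t)$, substitute the expression for $D_s \dot\omega^s_t|_{s=0}$, pull everything back by $\ptr_t^{-1}$, and use antisymmetry of $\mathbf{T}$ to reach
\[
\dot k_t = \dot y_t + \mathbf{T}_{\ptr_t}(y_t, \dot u_t) - \int_0^t \mathbf{R}_{\ptr_r}(du_r, y_r)\, \dot u_t\, dr .
\]
Integrating in $t$, using $y_0 = k_0 = 0$, gives the first formula. For the second, the adjoint identity (\ref{NablaHatNabla}) gives $\hptr_t \dot{\hat y}_t = \hat D_t Y_t = D_s \dot\omega^s_t|_{s=0}$, and solving for $\dot k_t$ and integrating produces the second formula directly.

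Uniqueness is automatic since each identity, read as an integral equation for $y$ (resp.\ $\hat y$) given $k$, is a linear Volterra equation on a compact time interval, so Picard iteration supplies a unique solution. The only real obstacle is the curvature computation identifying $W_t(v)$; once that is in hand, the rest is careful bookkeeping between the connections $\nabla$ and $\hat\nabla$ and the fact that the torsion difference between them is exactly what distinguishes the two stated formulas.
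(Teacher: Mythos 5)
Your proposal is correct and follows essentially the same route as the paper: both differentiate the development equation in $s$, commute $D_s$ with $D_t$ via the curvature identity to identify the $s$-derivative of parallel transport (your $W_t(v)$ is exactly the paper's $D_s e_{j,s}(t)|_{s=0}$ written frame-free), and then use the torsion relation $\hat D_t Y = D_t Y - \mathbf{T}(\dot\omega_t, Y_t)$ together with \eqref{NablaHatNabla} to produce the two stated forms. The only difference is cosmetic — you work with the transport operator $\ptr^s_t$ directly rather than with a $\nabla$-parallel frame $e_{j,s}(t)$ — and your explicit Volterra/Picard remark for uniqueness is a small addition the paper leaves implicit.
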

We remark that in the above statement, we used the notation
$$\mathbf{T}_{\ptr_t}(w_1,w_2) = \ptr_t^{-1} \mathbf{T}(\ptr_t w_1,\ptr_t w_2), \quad \mathbf{R}_{\ptr_t}(w_1, w_2) w_3= \ptr_t^{-1} \mathbf{R}(\ptr_t w_1,\ptr_t w_2) \ptr_t w_3 .$$
We will use this notation for tensors in general throughout the paper.

\begin{proof}
Let $D$ and $\hat D$ be the covariant derivative of respectively $\nabla$ and $\hat \nabla$. Write $e_{1,s}(t), \dots, e_{n,s}(t)$ for an orthonormal $\nabla$-parallel basis along $t \mapsto \omega^{s}_t$ with $e_{j,0}(t) = e_j(t)$ and $e_{j,s}(0) = e_j(0)$, and use the same basis to define $u_t + s k_t = \sum_{j=1}^n (u_j(t) + sk_j(t)) e_{j}(0)$. Then
$$\hat D_t \partial_s \omega =D_s \partial_t \omega = \sum_{j=1}^n \dot k_j e_{j,s}+ \sum_{j=1}^n (\dot u_j+s \dot k_j) D_s e_{j,s}.$$
By definition, we have $D_s e_{j,s}(0) =0$. Furthermore, we have
$$D_t D_s e_{j,s} = (D_t D_s - D_s D_t ) e_{j,s} = \mathbf{R}\left( \frac{\partial}{\partial t} \omega, \frac{\partial}{\partial s} \omega \right) e_{j,s} .$$
It follows that at $s=0$,
$$\hat D_t Y  = \hptr_t \dot{\hat y}_t= D_t Y - \mathbf{T}(\dot \omega_t, Y_t) = \sum_{j=1}^n \dot k_j e_{j}(t) + \sum_{j=1}^n \dot u_j ( D_s e_{j,s}(t) |_{s=0}),$$
and hence,
\begin{align*}
& \ptr_t^{-1} \hptr_t \dot {\hat y}_t = \dot y_t- \mathbf{T}_{\ptr_t}(\dot u_t, y_t) = \dot k_t + \sum_{j=1}^n \dot u_j(t) \int_0^t \mathbf{R}_{\ptr_s}(\dot u_s, y_s ) e_j(0) ds.
\end{align*}
\end{proof}

\begin{remark}[Non-complete connections] \label{re:non-complete}
Let $\omega \in W^\infty_x(M)$ be any given curve with $u = \Dev^{-1}(\omega)$. Then for arbitrary $k \in W_0^\infty(T_xM)$ and any $T > 0$, there is some $\ve >0$ such that $t \mapsto \Dev(u + s k)_t$ has a solution on $[0,T]$ for $|s|< \ve$. Hence, we have that $t \mapsto Y_t$ can still be defined as a derivative of a two-parameter family as in \eqref{DerY} for any $t \geq 0$.
\end{remark}

\subsection{Sub-Riemannian manifolds} We consider a sub-Riemannnian manifold as a triple $(M, H,g)$ where $M$ is a connected manifold, $H \subseteq TM$ is a subbundle of the tangent bundle and $g = \langle \, \cdot \, , \, \cdot \, \rangle_g$ is a metric tensor on~$H$. The sub-Riemannian structure $(H,g)$ induces a map $\sharp: T^*M \to H\subseteq TM$ defined by
$$\langle \alpha, v \rangle = \langle \sharp \alpha, v \rangle_{g}, \quad \alpha \in T^*_xM, \, v \in H_x, \, x \in M.$$
We can then define a (degenerate) sub-Riemannian cometric $g^*$ by
$$g^*(\alpha, \beta) = \langle \alpha, \beta \rangle_{g^*} = \langle \sharp \alpha, \sharp \beta \rangle_{g}.$$
We remark that in what follows, we use $g$, the map $\sharp$ as well as the cometric $g^*$ to state our results. For $v \in H$ and $\alpha \in T^*M$, we also use the notation $|v|_g = \langle v, v \rangle_g^{1/2}$ and $|\alpha|_{g^*} = \langle v, v \rangle_{g^*}^{1/2}$ and ask the reader to keep in mind that $|\alpha|_{g^*}$ may vanish for non-zero covectors.

As usual, we assume that~$H$ is bracket-generating, meaning that sections of~$H$ and their iterated brackets span the entire tangent bundle. A curve $\omega_t$ is called horizontal if it is absolutely continuous and satisfies $\dot \omega_t \in H_{\omega_t}$ for almost every~$t$. The bracket-generating condition implies that any pair of points can be connected by a horizontal curve. We hence have a well defined distance on $M$ given by
\begin{equation} \label{cc-distance} d_g(x,y) =\inf \left\{ \int_0^T \langle \dot \omega_t, \dot \omega_t \rangle_g^{1/2}\, dt \colon\ \omega_0 = x,\ \omega_T = y,\ \omega_t \text{ is horizontal}\right\}.\end{equation}
The topology induced by the metric $d_g$ coincides with the manifold topology. We say that $(M,H,g)$ is complete if $(M, d_g)$ is a complete metric space. For more details on sub-Riemannian geometry, see e.g. \cite{Mon02}.

\subsection{Connections compatible with the sub-Riemannian structure and development}
Let $\nabla$ be an affine connection on $TM$ for a sub-Riemannian manifold $(M,H, g)$. We are interested in the following types on connections.
\begin{definition}
\begin{enumerate}[\rm (a)]
\item We say that $H$ is parallel with respect to $\nabla$ if $H$ is preserved under parallel transport. Equivalently, $H$ is parallel with respect to $\nabla$ if for any $Y \in \Gamma(H)$, $Z \in \Gamma(TM)$ and $x \in M$, we have $\nabla_Z Y|_x \in H_x$. It is called horizontally parallel if $H$ is preserved by parallel transport along horizontal curves or equivalently if $\nabla_Z Y|_x \in H_x$ for any $x \in M$, $Y,Z \in \Gamma(H)$.
\item We say that $\nabla$ is compatible with  the sub-Riemannian structure $(H,g)$ if orthonormal frames in $H$ are taken to orthonormal frames in $H$ under parallel transport along all smooth curves. Equivalently, $\nabla$ is compatible with $(H,g)$ if $H$ is parallel with respect to $\nabla$ and for any $Y_1, Y_2 \in \Gamma(H)$, $Z \in \Gamma(TM)$, we have
\begin{equation} \label{compatible} Z \langle Y_1, Y_2 \rangle_g = \langle \nabla_Z Y_1, Y_2 \rangle_g + \langle Y_1, \nabla_Z Y_2 \rangle_g.\end{equation}
We say that $\nabla$ is horizontal compatible with $(H,g)$ if $H$ is horizontally parallel with respect to $\nabla$ and \eqref{compatible} holds for $Z \in \Gamma(H)$.
\end{enumerate}
\end{definition}
Unlike what holds in Riemannian geometry, there exists no affine connection that is both compatible with the sub-Riemannian structure and also torsion free when $H$ is bracket-generating and a proper subbundle of $TM$, see e.g. \cite{GT16}.

Let $t \mapsto \omega_t$ be any smooth horizontal curve with $\omega_0 = x$. If $H$ is horizontally parallel relative to $\nabla$, then the corresponding anti-development $u = \Dev^{-1}(\omega)$ is a smooth curve $H_x$, and the converse is also true for any curve $u \in W^\infty_x(H_x)$ if only for short time in general. We say that $\nabla$ is \emph{horizontally complete} if $\Dev(u)$ is defined for all time for every $u \in W^\infty_0(H_x)$. We note the following relation.

\begin{proposition}
Let $(M, H,g)$ be a complete sub-Riemannian manifold and let $\nabla$ be a connection that is horizontally compatible with the sub-Riemannian structure $(H,g)$. Then $\nabla$ is horizontally complete.
\end{proposition}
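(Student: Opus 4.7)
The plan is to combine horizontal compatibility (which turns the development into a length-preserving procedure) with the sub-Riemannian Hopf--Rinow theorem, and then to run a standard non-explosion argument for ODEs on an orthonormal frame bundle.

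First I would observe that, by definition of development, if $\omega_t = \Dev(u)_t$ is defined on some interval $[0,T_0)$ and $\ptr_t\colon T_xM \to T_{\omega_t}M$ denotes $\nabla$-parallel transport along $\omega$, then $\dot \omega_t = \ptr_t \dot u_t$. Since $u$ takes values in $H_x$ and $\nabla$ is horizontally compatible with $(H,g)$, parallel transport along $\omega$ preserves $H$ and the inner product $g$, so $\dot\omega_t \in H_{\omega_t}$ with $|\dot\omega_t|_g = |\dot u_t|_g$. In particular $\omega$ is a horizontal curve and for every $t \in [0, T_0)$,
\[
d_g(x,\omega_t) \le \int_0^t |\dot u_s|_g\, ds.
\]

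Next, suppose for contradiction that $\Dev(u)$ has a maximal interval of existence $[0, T^*)$ with $T^* < \infty$. Fix any $T \in (T^*, \infty)$; the right-hand side above is bounded by $C_T := \int_0^T |\dot u_s|_g\, ds < \infty$, so $\omega_t$ stays in the closed sub-Riemannian ball $\overline{B}_g(x, C_T)$ for all $t \in [0, T^*)$. Here I would invoke the sub-Riemannian Hopf--Rinow theorem (see e.g.\ \cite{Mon02}): since $(M,d_g)$ is complete and $H$ is bracket-generating, closed bounded sets for $d_g$ are compact. Hence $\omega_t$ is confined to a compact set $K \subseteq M$ on $[0, T^*)$.

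Now I would lift the development to the $\Ort(r)$-bundle $O_H(M) \to M$ of $g$-orthonormal frames in $H$ (where $r = \rank H$). Fix an orthonormal basis of $H_x$ and write $\dot u_t = \sum_{i=1}^r \dot u^i_t e_i$. Using the connection $\nabla$, define the $r$ standard horizontal vector fields $B_1,\dots, B_r$ on $O_H(M)$, so that if $f_t = (e_1(t), \dots, e_r(t))$ denotes the $\nabla$-parallel orthonormal frame of $H$ along $\omega_t$ with initial frame $(e_1,\dots,e_r)$, then $(f_t)$ solves the ODE
\[
\dot f_t \;=\; \sum_{i=1}^r \dot u^i_t\, B_i(f_t), \qquad f_0 = (e_1,\dots,e_r).
\]
This ODE is smooth with time-dependent right-hand side bounded (on any $[0,T]$) by $|\dot u_t|_g$ times the maximum of $|B_i|$ in any auxiliary Riemannian metric on $O_H(M)$ restricted to the compact set $O_H(M)|_K$ (a compact subset of $O_H(M)$, as the fibers $\Ort(r)$ are compact).

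Finally, since $\pi(f_t) = \omega_t \in K$ for all $t < T^*$, the trajectory $f_t$ stays in the compact set $O_H(M)|_K$ on $[0, T^*)$. Combined with the $L^1$ bound on $\dot f_t$ over $[0, T^*)$, standard ODE theory yields that $\lim_{t \uparrow T^*} f_t$ exists in $O_H(M)|_K$, and the ODE can then be solved past $T^*$, contradicting maximality. Hence $T^* = \infty$ and $\nabla$ is horizontally complete.

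The main obstacle is step two, where we need to know that closed bounded sets in $(M, d_g)$ are compact; this is precisely the content of the sub-Riemannian Hopf--Rinow theorem, and the rest of the argument is a routine lift-to-the-frame-bundle non-explosion argument.
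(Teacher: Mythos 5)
Your proof is correct and follows essentially the same route as the paper: horizontal compatibility makes parallel transport a $g$-isometry, so the developed curve is horizontal and confined to a sub-Riemannian ball of radius $\int_0^T|\dot u_s|_g\,ds$, whose closure is compact by completeness, whence the development extends to all time. The only difference is that you spell out the final non-explosion step via the orthonormal frame bundle, which the paper leaves implicit.
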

\begin{proof}
Let $u \in W^\infty_0(H_x)$ be fixed. For a given $T > 0$, let $\varphi(T) = \int_0^T |\dot u|^2_{g_x} dt$ denote the length of $u$ up to time $T$. Let $[0,T]$ be some interval for which the solution of
$$\dot u =\ptr_t^{-1} \dot \omega_t, \quad \omega_0 = x$$
exists. Then since $\ptr_t^{-1}$ is a linear isometry by our assumptions, we have that $\omega_t$, $t \in [0,T]$ has to be contained in the ball $B_g(x, \varphi(T)+\ve)$, $\ve >0$, centered at $x$ with radius $\varphi(T)+\ve$ defined relative to the sub-Riemannian distance $d_g$ defined in \eqref{cc-distance}. Since we are assuming that $(M,H,g)$ is complete, all such balls have compact closures, see e.g.~\cite{Bel96}. Hence, for any $T >0$, we can solve the development equation in $B_g(x, \varphi(T)+\ve)$. It follows that $\Dev(u)$ is well defined.
\end{proof}

We finally note that the map $\Dev$ restricted to $W^\infty_0(H_x)$ only depends on parallel transport along horizontal curves. For this reason, we will consider the concept of partial connections. Investigating this concept also allows us to find a unique choice of horizontally compatible connection relative to a choice of complement.

\subsection{Partial connections on sub-Riemannian manifolds}
\emph{A partial connection $\bnabla$ on $H$ in the direction of $H$} is a map $\bnabla\colon \Gamma(H) \times \Gamma(H) \to \Gamma(H)$, $(Y, Z) \mapsto \bnabla_Y Z$ satisfying that for $f\in C^{\infty}(M)$,
$$\bnabla_{fY} Z = f\bnabla_Y Z \quad \text{ and } \quad \bnabla_Y f Z = (Yf) Z + f \bnabla_Y Z.$$
In other words, covariant derivatives are only defined in the direction of $H$. A partial connection will give us a well defined parallel transport along $H$-horizontal curves. For more on partial connections, see \cite{CGJK15}.

Let $(M, H, g)$ be a sub-Riemannian manifold. A partial connection on $H$ in the direction of $H$ is \emph{compatible with $(H,g)$} if
\begin{equation} \label{PartiallyCompatible}
Z \langle Y_1, Y_2 \rangle_g = \langle \bnabla_Z Y_1, Y_2 \rangle_g + \langle Y_1, \bnabla_Z Y_2 \rangle\end{equation} for any $Z, Y_1, Y_2 \in \Gamma(H)$. We define its torsion $\mathbf{t}\colon H \times H \to TM$ by
$$\mathbf{t}(X,Y) = \bnabla_X Y- \bnabla_Y X - [X,Y].$$

\begin{lemma} Let $\bnabla$ be a partial connection on $H$ in the direction of $H$.
\begin{enumerate}[\rm (a)]
\item The map $X, Y \mapsto \mathbf{t}(X, Y) \mod H$ does not depend on the choice of $\bnabla$. In particular, $\mathbf{t}$ cannot vanish when $H$ is bracket-generating. Furthermore, if $V$ is a choice of complement for $H$, that is $TM = H \oplus V$, with corresponding projection $\pr_V$, then $\pr_V \mathbf{t}$ is independent of choice of partial connection.
\item Assume that $\bnabla$ is compatible with the sub-Riemannian metric. Then it is uniquely determined by its torsion.
\item Let $V$ be a choice of complement to $H$. Then there is a unique partial connection $\bnabla$ compatible with the sub-Riemannian structure $(H,g)$ and with $\mathbf{t}(H,H) \subseteq V$.
\end{enumerate}
\end{lemma}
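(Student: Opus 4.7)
The plan is to treat the three parts in order, using (a) and (b) to make (c) essentially formal.

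For part (a), I would start by observing that if $\bnabla$ and $\bnabla'$ are two partial connections on $H$ in the direction of $H$, the difference $A(X,Y) = \bnabla_X Y - \bnabla'_X Y$ is $C^\infty(M)$-bilinear with values in $H$ (the Leibniz terms cancel). Consequently $\mathbf{t}(X,Y) - \mathbf{t}'(X,Y) = A(X,Y) - A(Y,X)$ takes values in $H$, so $\mathbf{t} \bmod H$ is intrinsic. In fact, since $\bnabla_X Y - \bnabla_Y X \in \Gamma(H)$ by definition, the identity $\mathbf{t}(X,Y) = \bnabla_X Y - \bnabla_Y X - [X,Y]$ shows $\mathbf{t}(X,Y) \equiv -[X,Y] \bmod H$, and in particular $\pr_V \mathbf{t}(X,Y) = -\pr_V[X,Y]$, independent of $\bnabla$. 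If $\mathbf{t}$ vanished, then $[X,Y] \in \Gamma(H)$ for all $X,Y \in \Gamma(H)$, so $H$ would be involutive, contradicting the bracket-generating hypothesis (as $H$ is a proper subbundle).

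For part (b), I would derive a Koszul-type formula in the partial setting. Applying the compatibility identity \eqref{PartiallyCompatible} to the three pairings $X\langle Y,Z\rangle_g$, $Y\langle X,Z\rangle_g$, $Z\langle X,Y\rangle_g$ for $X,Y,Z \in \Gamma(H)$, and eliminating the antisymmetric combinations $\bnabla_X Y - \bnabla_Y X$ via the torsion formula, one obtains
\begin{align*}
2\langle \bnabla_X Y, Z\rangle_g
&= X\langle Y,Z\rangle_g + Y\langle X,Z\rangle_g - Z\langle X,Y\rangle_g \\
&\quad + \langle [X,Y] + \mathbf{t}(X,Y), Z\rangle_g - \langle [X,Z] + \mathbf{t}(X,Z), Y\rangle_g - \langle [Y,Z] + \mathbf{t}(Y,Z), X\rangle_g.
\end{align*}
The point is that each combination $[\,\cdot\,,\,\cdot\,]+\mathbf{t}(\cdot,\cdot)$ lies in $H$ (this is exactly the content of the torsion identity), so the inner products with horizontal vectors are well defined even though $g$ is only given on $H$. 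Since $g$ is non-degenerate on $H$, the right-hand side determines $\bnabla_X Y$ uniquely.

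For part (c), uniqueness combines (a) and (b): the constraint $\mathbf{t}(H,H)\subseteq V$ together with $\pr_V \mathbf{t}(X,Y) = -\pr_V[X,Y]$ from (a) forces $\mathbf{t}(X,Y) = -\pr_V[X,Y]$, and then (b) pins down $\bnabla$. For existence, I would extend $g$ to any Riemannian metric $\tilde g$ on $TM$ making $H$ and $V$ orthogonal, take its Levi-Civita connection $\tilde\nabla$, and set $\bnabla_X Y := \pr_H \tilde\nabla_X Y$ for $X,Y\in \Gamma(H)$. The partial-connection axioms are immediate; compatibility with $(H,g)$ follows because $H \perp_{\tilde g} V$ makes the projection $\pr_H$ harmless inside pairings with horizontal vectors; and the torsion is
\[
\mathbf{t}(X,Y) = \pr_H \tilde\nabla_X Y - \pr_H \tilde\nabla_Y X - [X,Y] = \pr_H[X,Y] - [X,Y] = -\pr_V[X,Y] \in \Gamma(V).
\]

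The only genuinely delicate step is the Koszul derivation in (b), where one must keep track that each bracket appears paired with a horizontal vector, which is ensured precisely by the accompanying torsion term. Once (b) is in hand, (a) is an algebraic identity and (c) reduces to extending the metric and projecting.
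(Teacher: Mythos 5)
Your proposal is correct, and for parts (b) and (c) it takes a mildly different route from the paper. For (b), the paper fixes a reference compatible partial connection $\bnabla'$, writes $\bnabla_X Y = \bnabla'_X Y + \kappa(X)Y$, and solves for $\kappa$ via the usual cyclic-permutation trick, concluding that $\kappa$ is determined by the horizontal part of the torsion; you instead derive a direct Koszul formula, which avoids the reference connection altogether and makes transparent why only the combinations $[\,\cdot\,,\cdot\,]+\mathbf{t}(\cdot,\cdot)\in\Gamma(H)$ (rather than the brackets themselves) need to be paired against $g$. For (c), the paper obtains existence by choosing the particular $\kappa$ that kills $\mathbf{t}_H$, which again presupposes that some compatible partial connection exists; your construction $\bnabla_X Y = \pr_H\tilde\nabla_X Y$ from the Levi-Civita connection of a taming metric with $H\perp V$ supplies that existence explicitly, so your argument is in this respect slightly more self-contained. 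Part (a) is the same observation as in the paper ($\mathbf{t}(X,Y)\equiv -[X,Y]\bmod H$), with your parenthetical that $H$ must be a proper subbundle for the non-vanishing claim being a reasonable reading of the intended hypothesis. Both approaches buy the same conclusion with comparable effort; yours trades the affine-space-of-connections bookkeeping for a one-line verification that the Koszul right-hand side is well defined.
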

\begin{proof}
The result in (a) follows from the fact that $\mathbf{t}(X, Y)  =  -[X,Y] \mod H$. To prove (b), choose an arbitrary complement $V$ and a reference compatible partial connection $\bnabla'$. Write its torsion $\mathbf{t}' = \mathbf{t}_H' + \mathbf{t}_V' = \pr_H \mathbf{t}' + \pr_V \mathbf{t}'$. For any partial connection $\bnabla$, we write $\bnabla_X Y = \bnabla'_X Y + \kappa(X) Y$ with torsion $\mathbf{t} = \mathbf{t}_H + \mathbf{t}_V'$. We then have that
$$\kappa(X) Y - \kappa(Y) X = \mathbf{t}(X,Y) - \mathbf{t}'(X,Y) = (\mathbf{t}_H - \mathbf{t}'_H)(X,Y) ,$$
from definition of torsion and using that \eqref{PartiallyCompatible} implies,
$$\langle \kappa(X) Y_1, Y_2 \rangle_g + \langle Y_1, \kappa(X) Y_2 \rangle_g = 0.  $$
Hence, it follows that $\kappa$ is determined by
\begin{align*}
\langle \kappa(X) Y_1, Y_2 \rangle & = \frac{1}{2}  \langle  (\mathbf{t}_H - \mathbf{t}'_H)(X,Y_1) , Y_2 \rangle_g \\
& \qquad  - \frac{1}{2} \langle  (\mathbf{t}_H - \mathbf{t}'_H)(Y_1,Y_2) , X \rangle_g - \frac{1}{2} \langle  (\mathbf{t}_H - \mathbf{t}'_H)(X,Y_2) , Y_1 \rangle_g. \end{align*}
Hence $\kappa$ is uniquely determined by $\mathbf{t}_H$. Furthermore, to prove (c), if we take
$$\langle \kappa(X) Y_1, Y_2 \rangle = \frac{1}{2} \left(- \langle  \mathbf{t}'_H(X,Y_1) , Y_2 \rangle_g + \langle  \mathbf{t}'_H(Y_1,Y_2) , X \rangle_g + \langle  \mathbf{t}'_H(X,Y_2) , Y_1 \rangle_g\right),$$
then $\mathbf{t}_H = 0$ and this is the unique such choice.
\end{proof}

Given a sub-Riemannian manifold $(M,H,g)$, let $V$ be a choice of complement. Let $\pr_H$ and $\pr_V$ be the corresponding projections. We write $\bnabla^{g,V}$ for the unique compatible partial connection with $\mathbf{t}(H,H) \subseteq V$. We will also write $\nabla = \nabla^{g,V}$ for an affine connection on the following form
$$\nabla_X Y = \left\{ \begin{array}{ll} \bnabla_X^{g,V} Y, & \text{if $X,Y \in \Gamma(H)$}; \\
\pr_H [X, Y], & \text{if $X \in \Gamma(V),\ Y \in \Gamma(H)$}; \\
\pr_V [X,Y], & \text{if $X \in \Gamma(H),\ Y \in \Gamma(V)$}, \end{array} \right.$$
and where $\nabla_{|V} |V$ can be an arbitrary partial connection on $V$ in the direction of~$V$. We allow this ambiguity in covariant derivatives, since, in what follows, the values of covariant derivatives of sections of $V$ in the directions of $V$ will not affect our formulas. We note the following.
\begin{proposition} \label{pro:NablaGV}
The connection on the form $\nabla = \nabla^{g,V}$ with torsion $\mathbf{T}$ satisfies the following properties:
\begin{enumerate}[\rm (a)]
\item Both $H$ and $V$ are parallel with respect to $\nabla$;
\item $\nabla$ is horizontally compatible with $(H,g)$;
\item $\mathbf{T}(H, H) \subseteq V$;
\item $\mathbf{T}(H, V) = 0$.
\end{enumerate}
Conversely, any connection $\nabla$ satisfying {\rm (a)--(d)} is of the form $\nabla^{g,V}$.
\end{proposition}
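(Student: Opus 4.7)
The plan is to verify the four properties (a)--(d) directly from the piecewise definition of $\nabla = \nabla^{g,V}$, and then to run the argument in reverse for the converse.

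For the forward direction, properties (a) and (b) are essentially built into the definition. For (a), I would check on each of the four cases of pairs $(X,Y) \in \Gamma(TM)^2$ decomposed according to the splitting $TM = H \oplus V$: if $X \in \Gamma(H)$ and $Y \in \Gamma(H)$, then $\nabla_X Y = \bnabla^{g,V}_X Y \in \Gamma(H)$; if $X \in \Gamma(V)$, $Y \in \Gamma(H)$, then $\nabla_X Y = \pr_H[X,Y] \in \Gamma(H)$; symmetrically one sees that $\nabla_X Y \in \Gamma(V)$ whenever $Y \in \Gamma(V)$ (the ambiguous $V$-in-$V$ piece is by assumption a partial connection on $V$). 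Property (b) is then immediate from compatibility of $\bnabla^{g,V}$ with $(H,g)$, since horizontal compatibility only involves derivatives $\nabla_Z$ for $Z \in \Gamma(H)$ applied to sections of $H$, which by definition equal $\bnabla^{g,V}$.

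For (c), when $X, Y \in \Gamma(H)$,
\[
\mathbf{T}(X,Y) = \bnabla^{g,V}_X Y - \bnabla^{g,V}_Y X - [X,Y] = \mathbf{t}(X,Y) \in V,
\]
by the defining property $\mathbf{t}(H,H) \subseteq V$ of the canonical partial connection. For (d), let $X \in \Gamma(H)$ and $Y \in \Gamma(V)$; then by construction $\nabla_X Y = \pr_V[X,Y]$ and $\nabla_Y X = \pr_H[Y,X]$, so
\[
\mathbf{T}(X,Y) = \pr_V[X,Y] - \pr_H[Y,X] - [X,Y] = \pr_V[X,Y] + \pr_H[X,Y] - [X,Y] = 0.
\]

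For the converse, suppose $\nabla$ satisfies (a)--(d). Restricting to $X, Y \in \Gamma(H)$, property (a) says $\nabla_X Y \in \Gamma(H)$, and (b) gives compatibility of the resulting partial connection $\bnabla := \nabla|_{H \times H}$ with $(H,g)$; its torsion takes values in $V$ by (c). By the uniqueness statement of the preceding lemma, $\bnabla = \bnabla^{g,V}$. Next, for $X \in \Gamma(V)$ and $Y \in \Gamma(H)$, (a) gives $\nabla_X Y \in \Gamma(H)$ and $\nabla_Y X \in \Gamma(V)$, and (d) yields $\nabla_Y X - \nabla_X Y - [Y,X] = 0$; projecting onto $H$ kills $\nabla_Y X$ and forces $\nabla_X Y = \pr_H[X,Y]$. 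The analogous argument, projecting the torsion identity onto $V$, gives $\nabla_X Y = \pr_V[X,Y]$ for $X \in \Gamma(H)$, $Y \in \Gamma(V)$. This exhausts the components that appear in the definition of $\nabla^{g,V}$, so $\nabla$ is of the prescribed form. There is no real obstacle here; the proof is bookkeeping on the decomposition $TM = H \oplus V$, and the only substantive input is the uniqueness part of the lemma classifying compatible partial connections by their torsion.
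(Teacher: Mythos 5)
Your proof is correct. The paper states this proposition without proof (it is introduced only by ``We note the following''), and your verification --- checking (a)--(d) case by case on the splitting $TM = H \oplus V$ and, for the converse, recovering each piece of the defining formula from the torsion identities together with the uniqueness of the compatible partial connection with $\mathbf{t}(H,H)\subseteq V$ --- is exactly the routine bookkeeping the authors are implicitly relying on.
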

We also note that $\nabla^{g,V}$ is always horizontally compatible with $(H,g)$, and hence is horizontally complete if $(H,g)$ is complete. In order to get compatible connections, we consider the following property introduced in \cite{GT16a}.
\begin{definition} \label{def:MetricPreserving}
For any $Z \in \Gamma(TM)$, let $\mathcal{L}_Z$ denote the corresponding Lie derivative. Let $V$ be a complement to a sub-Riemannian manifold $(M,H,g)$ with corresponding projection $\pr_H$ and $\pr_V$. The complement $V$ is then called metric preserving if for any $Z \in \Gamma(V)$ and $X \in \Gamma(H)$, we have
$$(\calL_Z \pr^*_H g)(X,X)  =0.$$
\end{definition}
One verifies from the definition of $\nabla = \nabla^{g,V}$ that $\nabla$ is compatible with $(H,g)$ if and only if $V$ is metric preserving.

\begin{remark}
Let $(M,H,g)$ be a sub-Riemannian manifold. We say that a Riemannian metric $\bar{g}$ \emph{tames} $g$ if $\bar{g}| _{H \times H}= g$. Assume that we have chosen a taming metric $\bar{g}$ for which $V = H^\perp$ is the orthogonal complement of $H$. Assume further that $V$ is integrable with corresponding foliation $\Phi$. Then the assumption of $V$ being metric-preserving is equivalent to assuming that the metric $\bar{g}$ is bundle-like, or, in a different terminology, assuming that $\Phi$ is a Riemannian foliation. We refer to \cite{GT16a} for details. We emphasize that none of these properties depend on $\bar{g}|_{V \times V}$.
\end{remark}

\subsection{The smooth horizontal path space seen from a metric preserving complement} \label{sec:HorPathSmooth}
Let $(M, H, g)$ be a complete sub-Riemannian manifold with a chosen complement $V$. We assume that $V$ is metric preserving such that $\nabla = \nabla^{g,V}$ is a connection compatible with the sub-Riemannian structure. For discussion of the general case, see Appendix~\ref{sec:Any}.

Let $\nabla$ have torsion $\mathbf{T}$ and curvature $\mathbf{R}$ and define $\Dev = \Dev^\nabla$ relative to this connection. Then the following result holds.

\begin{proposition} \label{lemma:SmoothPathSpace}
Consider $\omega^s_t = \Dev(u+ sk)_t$ for $u,k \in W^\infty_0(H_x)$. Write $\omega = \Dev(u)$ and introduce a linear map $A_t = A_t^\omega: T_xM \to T_xM$ by
\begin{equation}
A_t w = \int_0^t \mathbf{T}_{\ptr_s} (du_s, w).
\end{equation}
If $Y_t = \frac{\partial}{\partial s} \omega_t^s |_{s=0}$, then $Y_t = \ptr_t y_t = \hptr_t \hat y_t$ with
$$y_t = h_t + \int_0^t dA_s h_s, \quad \hat y_t = h_t - \int_0^t A_s dh_s,$$
where $h_t = \pr_H y_t$ is the unique solution of
\begin{align} \label{kandh}
k_t & = h_t  - \int_0^t\int_0^s \mathbf{R}_{\ptr_r} \left( du_r , h_r \right) du_s. \end{align}
\end{proposition}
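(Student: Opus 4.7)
The plan is to apply Lemma~\ref{lemma:SmoothPathSpace_0} to the specific connection $\nabla = \nabla^{g,V}$ and to exploit the structural properties collected in Proposition~\ref{pro:NablaGV} in order to project the resulting identity onto $H$ and $V$. Writing $y_t = \ptr_t^{-1} Y_t$, the lemma yields
\begin{equation*}
k_t = y_t + \int_0^t \mathbf{T}_{\ptr_s}(y_s, du_s) - \int_0^t \int_0^s \mathbf{R}_{\ptr_r}(du_r, y_r)\, du_s.
\end{equation*}
Since both $H$ and $V$ are parallel with respect to $\nabla$, parallel transport preserves the splitting $T_xM = H_x \oplus V_x$, so I can decompose $y_t = h_t + v_t$ with $h_t = \pr_H y_t \in H_x$ and $v_t = \pr_V y_t \in V_x$. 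The torsion conditions $\mathbf{T}(H,H)\subseteq V$ and $\mathbf{T}(H,V)=0$ show that $A_t$ maps $H_x$ into $V_x$ and annihilates $V_x$; using the antisymmetry of $\mathbf{T}$, the torsion integral reduces to $-\int_0^t dA_s\, h_s \in V_x$.

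I next project the displayed equation onto $H$ and $V$. Since $H$ is parallel, $\mathbf{R}(X_1,X_2)Z\in H$ whenever $Z\in H$, so the double curvature integral has values in $H_x$. The $V$-projection therefore yields $v_t = \int_0^t dA_s\, h_s$, which gives the asserted formula $y_t = h_t + \int_0^t dA_s\, h_s$. The $H$-projection gives $k_t = h_t - \int_0^t \int_0^s \mathbf{R}_{\ptr_r}(du_r, y_r)\, du_s$, and it remains to replace $y_r$ by $h_r$ inside the curvature. This replacement rests on the identity $\mathbf{R}(X,v)Z = 0$ for $X,Z\in\Gamma(H)$ and $v\in\Gamma(V)$, which I expect to be the main technical point. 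I would establish it by direct calculation from the explicit formulas $\nabla_X Z = \bnabla^{g,V}_X Z$, $\nabla_v Z = \pr_H[v,Z]$, $\nabla_X v = \pr_V[X,v]$, using the metric-preserving hypothesis (equivalently, full $(H,g)$-compatibility of $\nabla^{g,V}$) together with the Jacobi identity to produce the needed cancellations. Uniqueness of $h_t$ in the resulting linear Volterra equation is then standard.

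For the adjoint-parallel version $\hat y_t$, I introduce the linear map $Q_t := \hptr_t^{-1}\ptr_t \colon T_xM \to T_xM$. Using \eqref{NablaHatNabla} and the fact that $\ptr_t w$ is $\nabla$-parallel, differentiation gives
\begin{equation*}
\tfrac{d}{dt} Q_t w = \hptr_t^{-1} \hat D_t \ptr_t w = -\hptr_t^{-1}\mathbf{T}(\dot\omega_t, \ptr_t w) = -Q_t\, \mathbf{T}_{\ptr_t}(\dot u_t, w).
\end{equation*}
For $w\in V_x$ the right-hand side vanishes by $\mathbf{T}(H,V)=0$, so $Q_t|_{V_x} = \id$; for $w\in H_x$, the torsion $\mathbf{T}_{\ptr_t}(\dot u_t, w)$ lies in $V_x$ and is therefore fixed by $Q_t$, so that integrating gives $Q_t w = w - A_t w$. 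Thus $Q_t = \id - A_t$, and in particular $\hat y_t = Q_t y_t = y_t - A_t h_t$, since $A_t$ kills $v_t$.

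The smooth product rule $A_t h_t = \int_0^t dA_s\, h_s + \int_0^t A_s\, dh_s$ (valid because $A_0 = 0$), together with $y_t = h_t + \int_0^t dA_s\, h_s$, then yields $\hat y_t = h_t - \int_0^t A_s\, dh_s$, as required. The principal obstacle in the argument is the vanishing $\mathbf{R}(X,v)Z = 0$ for horizontal $X,Z$ and vertical $v$ under $\nabla^{g,V}$: this is exactly what permits the simplification from $y_r$ to $h_r$ inside the curvature double integral, and is the point where the metric-preserving hypothesis on $V$ enters essentially.
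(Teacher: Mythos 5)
Your overall strategy is exactly the one the paper follows: Proposition~\ref{lemma:SmoothPathSpace} is obtained by specializing Lemma~\ref{lemma:SmoothPathSpace_0} to $\nabla=\nabla^{g,V}$ and projecting the resulting identity onto $H\oplus V$ using the structural properties of Proposition~\ref{pro:NablaGV} (in the paper this is packaged as the $q\equiv 0$ special case of Lemma~\ref{lemma:SmoothPathSpaceAny}). Your reduction of the torsion integral to $-\int_0^t dA_s\,h_s$, the $V$-projection giving $\pr_V y_t=\int_0^t dA_s\,h_s$, and the derivation of $\hat y_t$ via $\hptr_t^{-1}\ptr_t=\id-A_t$ together with the product rule are all correct and complete (one notational caveat: your auxiliary $Q_t$ collides with the damped transport $Q_t$ used throughout the paper).

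The one genuine gap is the step you yourself flag as the crux: the vanishing $\mathbf{R}(X,v)Z=0$ for $X,Z\in\Gamma(H)$, $v\in\Gamma(V)$ is asserted, with a promise that it follows ``by direct calculation from the explicit bracket formulas and the Jacobi identity'', but it is never established. This is precisely where the metric-preserving hypothesis enters: for a general complement the term $\int_0^t\int_0^s\mathbf{R}_{\ptr_r}(du_r,\pr_V y_r)\,du_s$ does not vanish and produces exactly the $\nabla q$ corrections appearing in \eqref{k-h-R}, so the proposition as stated fails without this step. The efficient argument (the one in the proof of Lemma~\ref{lemma:SmoothPathSpaceAny}) is tensorial rather than a bracket computation: set $S_z(v_1,v_2,v_3)=\langle\mathbf{R}(v_1,z)v_2,v_3\rangle_g$ for $v_i\in H$, $z\in V$. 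Full compatibility ($q\equiv 0$, i.e.\ $\nabla g=0$ on $H$) gives $\mathbf{R}(v_1,z)\cdot g=0$, hence antisymmetry of $S_z$ in $(v_2,v_3)$; the first Bianchi identity combined with $\mathbf{T}(H,V)=0$, $\mathbf{T}(H,H)\subseteq V$ and the parallelism of $H$ and $V$ gives symmetry in $(v_1,v_2)$; a trilinear form symmetric in one overlapping pair and antisymmetric in another vanishes identically; and since $H$ is parallel, $\mathbf{R}(v_1,z)v_2\in H$, so $S_z\equiv 0$ forces $\mathbf{R}(v_1,z)v_2=0$. Your proposed brute-force route through $\nabla_v Z=\pr_H[v,Z]$ etc.\ could in principle be pushed through (the Jacobi identity is indeed what underlies the Bianchi identity with torsion), but as written the key claim is unproven, so the argument is incomplete at its most essential point.
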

The statement is a special case of Lemma~\ref{lemma:SmoothPathSpaceAny}, Appendix. Based on this result, we make the following definition.
\begin{definition} \label{Def:SmoothDh}
Let $(M,H,g)$ be a sub-Riemannian manifold with a metric preserving complement $V$ and define $\nabla = \nabla^{g,V}$. For any $h \in W^\infty_0(H_x)$, we define a vector field $D_h$ on $W^\infty_x(M)$ by
$$D_h|_\omega = \ptr_t^{-1} \left( \textstyle h_t + \int_0^t \mathbf{T}_{\ptr_s}(du_s, h_s)  \right)= \ptr_t^{-1} \left( \textstyle h_t + \int_0^t dA_s h_s  \right), \quad u = \Dev^{-1}(\omega),$$
where $\ptr_t$ denotes parallel transport along $\omega$ with respect to $\nabla$.
\end{definition}
We note the following immediate consequence of Proposition~\ref{lemma:SmoothPathSpace}.
\begin{corollary}
For any horizontal curve $\omega$ with $u = \Dev^{-1}(\omega)$, we have 
$$\left\{ \frac{d}{ds} \Dev(u + s k) |_{s=0} \colon\, k \in W_0^\infty(H) \right\} = \{ D_h |_\omega \colon\, h \in W_0^\infty(H) \}.$$
\end{corollary}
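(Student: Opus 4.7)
The plan is to use Proposition~\ref{lemma:SmoothPathSpace} to recognize both sides of the claimed equality as parameterized by $h \in W_0^\infty(H_x)$, and then to invoke a Volterra-type argument showing that the implicit correspondence between $k$ and $h$ is a bijection.

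First I would note that, by Proposition~\ref{lemma:SmoothPathSpace}, for each $k \in W_0^\infty(H_x)$ the path derivative $\frac{d}{ds}\Dev(u+sk)|_{s=0}$ equals, at time $t$ along $\omega$, the vector $\ptr_t\bigl(h_t + \int_0^t dA_s\, h_s\bigr)$, where $h \in W_0^\infty(H_x)$ is the unique solution of the integral equation~\eqref{kandh}. Definition~\ref{Def:SmoothDh} expresses $D_h|_\omega$ in terms of precisely the same quantity $h_t + \int_0^t dA_s\, h_s$. Hence the right-hand side of the corollary is the set of all tangent vector fields along $\omega$ obtained as $h$ ranges over $W_0^\infty(H_x)$, whereas the left-hand side is the subset obtained from those $h$ that arise from some $k$ via~\eqref{kandh}. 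The equality therefore reduces to showing that the linear map $\Psi\colon W_0^\infty(H_x) \to W_0^\infty(H_x)$ sending $h \mapsto k$ via~\eqref{kandh} is bijective.

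To prove this, I would write $\Psi = I - K$, where
\[ K(h)_t = \int_0^t \int_0^s \mathbf{R}_{\ptr_r}(du_r, h_r)\, du_s. \]
Because $\nabla = \nabla^{g,V}$ preserves $H$ under parallel transport (Proposition~\ref{pro:NablaGV}(a)), the curvature operator $\mathbf{R}_{\ptr_r}(\dot u_r, h_r)$ sends $H_x$ into $H_x$; evaluated on $\dot u_s \in H_x$ and integrated in $s$, the result is again horizontal, so $K$ maps $W_0^\infty(H_x)$ into itself. The operator $K$ is of Volterra type: $K(h)_t$ depends only on $h|_{[0,t]}$ and involves two time integrations, which in particular improves regularity. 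Given $k \in W_0^\infty(H_x)$, the fixed-point equation $h = k + K(h)$ therefore admits a unique smooth solution, namely the Neumann series $h = \sum_{n \ge 0} K^n(k)$, which converges locally uniformly (together with all time derivatives) because iterated Volterra kernels decay like $C^n t^n / n!$ on compact time intervals.

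The one point where I expect to have to be careful --- and essentially the only mild obstacle --- is checking that each iterate $K^n(k)$ remains in $W_0^\infty(H_x)$, i.e.\ stays smooth and $H_x$-valued. Smoothness is automatic from the two time-integrations, while the $H_x$-valuedness uses nothing beyond the horizontal-parallelism of $\nabla$ recorded above. This gives bijectivity of $\Psi$, and hence equality of the two sets in the corollary.
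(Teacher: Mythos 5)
Your argument is correct and is essentially the route the paper takes: the paper treats the corollary as an immediate consequence of Proposition~\ref{lemma:SmoothPathSpace}, resting on the bijective correspondence $k \leftrightarrow h$ through the linear Volterra equation \eqref{kandh} (with uniqueness already asserted in the proposition), plus the observation that both $k\mapsto h$ and its inverse preserve $W_0^\infty(H_x)$ because $H$ is $\nabla$-parallel. Your Neumann-series justification simply makes explicit the standard existence/uniqueness step that the paper leaves implicit.
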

We note that $\frac{d}{ds} \Dev(u + s k) |_{s=0} = D_h|_\omega$ where $k$ and $h$ are related by \eqref{kandh}. In the case when $\nabla$ is a flat connection, i.e. if $\mathbf{R} \equiv 0$, then $k = h$. We will generalize such vector fields to functions $h$ with values in the Cameron-Martin space in the next section.

\section{Diffusions and gradients on path space} \label{sec:Gradients}
Throughout this section, we assume that $M$ is compact for a simpler presentation. We hence have that all tensors are bounded and that all local martingales are indeed martingales. The same results hold in the non-compact case under some additional assumptions, see Section~\ref{sec:Noncompact} for details.
\subsection{Sub-Riemannian diffusions and notation}
Let $M$ be a compact manifold and let $W_x = W_x(M)$ be the space of continuous maps 
$\omega\colon[0,\infty) \to M$ 
with $\omega_0 = x$. Let $(H,g)$ be a sub-Riemannian structure on $M$ and let $\nabla$ be a horizontally compatible connection. Recall that the Hessian of $\nabla$ is defined as
$$\nabla_{Y_1, Y_2}^2 = \nabla_{Y_1} \nabla_{Y_2} - \nabla_{\nabla_{Y_1} Y_2}, \quad Y_1, Y_2 \in \Gamma(TM).$$
We write $L =  \tr_H \nabla^2_{\times,\times}$ for the connection sub-Laplacian of $\nabla$ and let  $x \mapsto X_t^x \in W_x$ be the stochastic flow with generator $\frac{1}{2} L$ and $X_0^x = x$ defined on the filtered probability space $(W_x , \mathscr{F}_{\!\fatdot}, \mathbb{P}_x )$.

For $0 \leq s \leq t < \infty$, let $\ptr_{s,t} : T_{X_s^x} M \to T_{X_t^x} M$ denote the parallel transport along $X_t^x$ with respect to $\nabla$ and write $\ptr_{0,t} = \ptr_t$. Note that $\ptr_{s,t} = \ptr_t \ptr_s^{-1}$. The solution $B_t^x$ of
$$dB_t^x = \ptr_t^{-1} \circ d X_t^x, \quad B_0^x = 0 \in H_x,$$
is a standard Brownian motion in $H_x$. Hence, $X_t^x$ can be considered as the development of the Brownian motion in $H_x$.

For any $T > 0$, we define $W^T_x$ as the curves in $W_x$ restricted to $[0, T]$. We write the induced structure of a filtered probability space as $(W_x^T, \mathscr{F}^T_{\!\fatdot}, \mathbb{P}_{x}^T )$ and the corresponding stochastic process as $X^x_{[0,T]}$. Introduce the Cameron-Martin space $\mathbb{H}^T_x := \mathbb{H}^T(H_x)$ as the Hilbert space of absolutely continuous functions $h\colon [0, T] \to H_x$ with $\int_0^T |\dot h_t|^2_{g} \,dt < \infty$ and with inner product
$$\langle h, k \rangle_{\mathbb{H}} = \int_0^T \langle \dot h_t , \dot k_t \rangle_{g}\,dt, \quad h,k \in \mathbb{H}^T_x.$$
More generally, we define
\begin{align*}
\mathbb{H}^T_{W,x} & = L^2(W^T_x \to \mathbb{H}^T_x;  \mathscr{F}_{\!\fatdot}^T , \mathbb{P}_{x}^T )\\
& = \left\{ h \in L^2(W^T \to \mathbb{H}^T_x; \mathbb{P}_x^T)\colon\, \text{$h_t$ is $\mathscr{F}_t$-measurable, $t \in [0,T]$} \right\},\end{align*}
as a Hilbert space with inner product $\langle h, k \rangle_{L^2} = \mathbb{E} \langle h, k \rangle_{\mathbb{H}}$. As usual, we write $\langle h, B^x \rangle_{\mathbb{H}} = \int_0^T \langle \dot h_s, dB_s^x \rangle_{g}$.

\subsection{Gradient on path space}
Let $V$ be an arbitrary complement to $H$ and define $\nabla = \nabla^{g,V}$. Let $x \mapsto X_{\fatdot}^x$ be the corresponding stochastic flow with generator equal to the trace of the Hessian of $\nabla$. For any $h \in W^\infty_0(H_x)$, recall the definition of $D_h$ on $W^\infty_x(M)$ from Definition~\ref{Def:SmoothDh}. As parallel transport is well defined along a path in $W_x$ almost surely, we can consider $D_h$ as a $\mathbb{P}$-almost surely defined vector field on $W_x$. We want to make this definition more precise and valid for functions $h$ in the Cameron-Martin space.

We first introduce the following notation. Let $\mathbf{T}$ be the torsion of $\nabla$ and define $\delta_H\mathbf{T}(\, \cdot \,) = - \tr_H (\nabla_\times \mathbf{T})(\times, \, \cdot \,)$. We define the following endomorphism $A_{x,t} = A_t: T_xM \to T_xM$ by
$$A_t(\, \cdot \,) = \int_0^t \mathbf{T}_{\ptr_s}( \circ dB_s^x, \, \cdot \,) =  \int_0^t \mathbf{T}_{\ptr_s}( dB_s^x, \, \cdot \,) - \frac{1}{2}\int_0^t (\delta_H \mathbf{T})_{\ptr_s}(\, \cdot \,) ds,$$
where $\circ d$ denotes the Stratonovich differential and $\delta_H \mathbf{T} = - \tr_H (\nabla_\times \mathbf{T})(\times, \, \cdot \,)$.
We remark that by the defining properties of $\nabla$ in Proposition~\ref{pro:NablaGV}, we have that $A_t(H_x) \subseteq V_x$ and $A_t(V_x) = 0$. For fixed $T > 0$, consider the space of cylindrical functions
$$\FC = \left\{ F\colon\, \omega \in W^T_x \mapsto f(\omega_{t_1}, \dots, \omega_{t_n})\left| \begin{array}{c} 0 \leq t_1 < \cdots < t_n \leq T, \\ n \geq 0,\ f \in C^\infty(M^n) \end{array} \right.\right\}.$$
We define $D_h$ acting on a cylindrical function $F : \omega \mapsto f(\omega_{t_1}, \omega_{t_2}, \dots,  \omega_{t_n})$ by
\begin{align*}
D_h F & = \sum_{i=1}^n  \langle \ptr_{t_i}^{-1} d_if|_{(\omega_{t_1}, \dots, \omega_{t_n})}, {\textstyle h_{t_i} + \int_0^{t_i} dA_t h_t} \rangle \\
& =\sum_{i=1}^n  \langle \ptr_{t_i}^{-1} d_if|_{(\omega_{t_1}, \dots, \omega_{t_n})}, {\textstyle  \int_0^{t_i} (\id + A_{t_i} - A_t) dh_t} \rangle . \end{align*}
This is consistent with our definition in the smooth case in Section~\ref{sec:HorPathSmooth}. If we define $D_tF \in H_x$ by
$$D_t F := \sum_{i=1}^n  1_{t \leq t_i}  \sharp (\id + A_{t_i} - A_t )^* \ptr_{t_i}^{-1}   d_if|_{(\omega_{t_1}, \dots, \omega_{t_n})},$$
then for every $h \in \mathbb{H}^T_x$,
\begin{align*}
& \int_0^T \langle D_t F, \dot h_t \rangle_g dt = D_h F.
\end{align*}
We define \emph{the gradient} $DF \in \mathbb{H}^T_{W,x}$ by the relation $\langle DF,h\rangle_{\mathbb{H}} = D_hF$.

We like to show that the operator $D\colon F \to DF$ can be closed on path space in the case when $V$ is a metric preserving complement. For this case, we need the following integration by parts formula. Recall that $\mathbf{R}$ is the curvature of~$\nabla$. Introduce the corresponding Ricci operator $\Ric\colon TM \to TM$ by
\begin{equation} \label{Ric} \Ric(v) = - \tr_H \mathbf{R}(\times, v ) \times.\end{equation}\goodbreak

\begin{theorem} \label{IBPD}
Assume that $V$ is metric preserving.
\begin{enumerate}[$(a)$]
  \item For any $F \in \FC$, we have
   $$d_x\E_x[F]=\E_x[D_0F]-\frac{1}{2}\int_0^T\E_x[(\Ric_{\ptr_s}Q_s)^*D_sF]\,ds,$$
   where $Q_t$ is the solution to the following equation:
   \begin{align}\label{Q-equation}
    dQ_t = - \frac{1}{2} \Ric_{\ptr_t} Q_t \,dt, \qquad Q_0 = \id_{T_xM}.
   \end{align}
  \item For any $F \in \FC$ and $h \in \mathbb{H}^T_x$, we have
$$\mathbb{E}_x[\langle DF, h \rangle_{\mathbb{H}}] =\mathbb{E}_x\left[F \int_0^T \langle \dot h_t + \frac{1}{2} \Ric_{\ptr_t} h_t, dB_t \rangle_{g}\right]. $$
\end{enumerate}
\end{theorem}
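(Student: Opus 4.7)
The plan is to combine the martingale representation theorem for the Brownian filtration of $B^x$ with a Bismut-type derivative formula for the heat semigroup $P_rg(y)=\E_y[g(X_r^y)]$, adapted to the sub-Riemannian setting with a metric preserving complement. Statements (a) and (b) are two facets of the same computation, so I would set up a common framework and extract both at the end rather than derive one from the other.

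\textbf{Bismut formula for $P_r$ and proof of (a).} The analytic backbone is the identity
$$d(P_Tg)(x)\cdot v \;=\; \E_x\bigl[\langle dg(X_T^x),\;\ptr_T\,(\id+A_T)\,Q_T v\rangle\bigr]$$
for smooth $g$ and $v\in T_xM$, where $A_T v=\int_0^T \mathbf{T}_{\ptr_s}(\circ dB_s^x,v)$ records the torsion contribution predicted by Proposition~\ref{lemma:SmoothPathSpace} and $Q_T$ provides the damping from \eqref{Q-equation}. I would prove this by verifying that $r\mapsto Q_r^{*}(\id+A_r)^{*}\ptr_r^{-1}\sharp d(P_{T-r}g)(X_r^x)$ is a local martingale: the Stratonovich increment from $dA_r$ together with the ODE for $Q_r$ should cancel the drift coming from the commutator $[L,\sharp d]$, which is governed by $\Ric$ via a Weitzenb\"ock-type identity available once $\nabla$ is compatible with $(H,g)$ (guaranteed by $V$ being metric preserving). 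Iterating this formula through the Markov property on a cylindrical $F(\omega)=f(\omega_{t_1},\ldots,\omega_{t_n})$, the variational pieces $(\id+A_{t_i})^{*}\ptr_{t_i}^{-1} d_if$ aggregate precisely to $D_0F$, while the damping accumulated by the $Q_{t_i}$'s combined with $dQ_s=-\tfrac12\Ric_{\ptr_s}Q_s\,ds$ produces the remainder $-\tfrac12\int_0^T \E_x[(\Ric_{\ptr_s}Q_s)^{*}D_sF]\,ds$, giving (a).

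\textbf{Proof of (b).} For part (b), I would use the martingale representation
$$\E_x[F\mid\mathscr F_t]=\E_x[F]+\int_0^t\langle U_s,dB_s^x\rangle_g$$
and identify $U_s$ by applying the Bismut formula above conditionally from time $s$ onwards. Once $U_s$ is written explicitly in terms of $D_sF$ together with a residual $Q$-correction, It\^o's isometry gives
$$\E_x\bigl[F\cdot{\textstyle\int_0^T}\langle\eta_s,dB_s^x\rangle_g\bigr]=\E_x\bigl[{\textstyle\int_0^T}\langle U_s,\eta_s\rangle_g\,ds\bigr]$$
for every adapted $H_x$-valued $\eta$. Plugging in $\eta_s=\dot h_s+\tfrac12\Ric_{\ptr_s}h_s$ and integrating by parts in the $s$-variable, again driven by the ODE for $Q$, collapses the right-hand side to $\E_x[\int_0^T\langle D_sF,\dot h_s\rangle_g\,ds]=\E_x[\langle DF,h\rangle_{\mathbb{H}}]$, which is exactly (b).

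\textbf{Main obstacle.} The principal difficulty is the accurate bookkeeping of the torsion correction $A_t$: in contrast with the Riemannian case, the infinitesimal variation of the development of $B^x+\varepsilon k$ is $\ptr_t\bigl(k_t+\int_0^t dA_s k_s\bigr)$ rather than $\ptr_tk_t$, and this extra piece must thread through every layer of the martingale/Bismut argument so as to reproduce exactly the $D_sF$ defined at the start of the section. The hypothesis that $V$ is metric preserving is what keeps $\nabla$ compatible with $(H,g)$, keeps both $H$ and $V$ parallel, and yields $A_t(H)\subseteq V$ with $A_t(V)=0$; without it the same scheme would produce additional cross-terms of the type discussed in Appendix~\ref{sec:Any}, which is precisely why metric preservation appears as a hypothesis in the statement.
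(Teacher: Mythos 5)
Your overall architecture---a Bismut-type derivative formula established by exhibiting a covector-valued local martingale whose drift is cancelled via a Weitzenb\"ock identity, iteration over cylindrical functions through the Markov property, and then martingale representation plus the It\^o isometry for the integration by parts---is exactly the route the paper takes: the theorem itself is deduced in a few lines from Theorem~\ref{Dev-Intr-formulas} and Theorem~\ref{IBPtildeD}, and those are proved by your scheme. The problem is that your central formula is wrong, so the verification you propose would fail.

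You assert that $r\mapsto Q_r^*(\id+A_r)^*\ptr_r^{-1}dP_{T-r}g|_{X_r}$ is a local martingale, i.e.\ that $dP_Tg(x)v=\E_x\bigl[\langle dg|_{X_T},\ptr_T(\id+A_T)Q_Tv\rangle\bigr]$. Since $(\id+A_r)^*\ptr_r^{-1}=\hptr_r^{-1}$ on covectors, your process is $Q_r^*\hptr_r^{-1}dP_{T-r}g|_{X_r}$, whereas the true martingale is $\hat Q_r^*\hptr_r^{-1}dP_{T-r}g|_{X_r}$ (Lemma~\ref{lemma:DampGradient}), where $\hat Q$ is damped by $\Ric_{\hptr_t}$, not $\Ric_{\ptr_t}$. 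Because $\Ric$ vanishes on $V$ and $A_r(H)\subseteq V$, one has $\Ric_{\hptr_r}-\Ric_{\ptr_r}=-A_r\Ric_{\ptr_r}$, so your process carries the uncancelled drift $-\tfrac12(A_r\Ric_{\ptr_r}Q_r)^*\hptr_r^{-1}dP_{T-r}g\,dr$, which is nonzero in general. Equivalently, $(\id+A_T)Q_T=Q_T+A_TQ_T$ differs from the correct weight $Q_T+\int_0^TdA_sQ_s=(\id+A_T)\hat Q_T$ of Corollary~\ref{cor-Dev-Int}(a) by $\int_0^TA_s\,dQ_s=-\tfrac12\int_0^TA_s\Ric_{\ptr_s}Q_s\,ds$; note that your own ``main obstacle'' paragraph records the variation as $\ptr_t(k_t+\int_0^tdA_sk_s)$, which is inconsistent with the factor $(\id+A_T)Q_T$ once $Q$ is non-constant. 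The missing idea is thus the adjoint connection $\hat\nabla=\nabla-\mathbf{T}$: the damping ODE must be written along $\hptr$, where the Weitzenb\"ock formula \eqref{GenWeitzHatNabla} reduces to exactly $\tfrac12\Ric^*$ when $V$ is metric preserving, and only afterwards does one convert back to $\ptr$, $Q$ and $A$ via $U_t\hat Q_t=Q_t+\int_0^t dA_rQ_r$ (Lemma~\ref{lemma:Conversion}, \eqref{hatQ-eq}); this conversion is what produces the remainder $-\tfrac12\int_0^T(\Ric_{\ptr_s}Q_s)^*D_sF\,ds$ in (a) and the shift $dk_t=dh_t+\tfrac12\Ric_{\ptr_t}h_t\,dt$ in (b). The same defect propagates into your identification of the integrand in the martingale representation for part (b). (A smaller slip: applying $\sharp$ at $X_r$ before transporting discards the $V$-component of $dP_{T-r}g$, on which the $A$-corrections in $D_0F$ genuinely depend.)
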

In particular, for $F(X_{[0,T]})=f(X_t)$, our result reduces to the following form, see the end of Section~\ref{sec: Proof:Dev-Intr-formulas} for more details.
\begin{corollary}\label{cor-Dev-Int}
Assume that $V$ is metric preserving.
\begin{enumerate}[\rm (a)]
  \item Let $Q_t$ be the solution of \eqref{Q-equation}.
   $$dP_t f(v) = \E_x\left[ \left\langle \ptr_t^{-1} df|_{X_t} , Q_t v + \int_0^t dA_s Q_s v\right\rangle \right], \quad v \in T_xM;$$
  \item for any $k \in \mathbb{H}^T_x$ with $h_t = Q_t \int_0^t Q_s^{-1} dk_s$, we have
\begin{align*}
\E_x\left[ f(X_T)\langle k, B \rangle_{\mathbb{H}} \right] & = \E_x\left[ \left\langle \ptr_T^{-1} df|_{X_T}  , h_T + \int_0^T dA_t h_t  \right\rangle \right] .\end{align*}
\end{enumerate}

\end{corollary}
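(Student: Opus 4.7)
The plan is to deduce both identities from Theorem~\ref{IBPD} by specializing to a cylindrical function depending on only one time, namely $F(\omega)=f(\omega_t)$ for (a) and $F(\omega)=f(\omega_T)$ for (b). In both cases the sum in the definition of $D_sF$ collapses to a single term, so the whole argument reduces to bookkeeping with the two endomorphisms $A_s$ and $Q_s$ together with the ODE $dQ_sv = -\tfrac{1}{2}\Ric_{\ptr_s}Q_sv\,ds$.

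For (a), the definition gives $D_sF = 1_{s\le t}\,\sharp(\id+A_t-A_s)^*\ptr_t^{-1}df|_{X_t}$. Evaluating Theorem~\ref{IBPD}(a) at $v\in T_xM$ and using that $D_sF$ is of the form $\sharp(\text{covector})$ yields
\begin{equation*}
dP_tf(v) = \E_x\!\left[\left\langle \ptr_t^{-1}df|_{X_t},\ (\id+A_t)v - \tfrac{1}{2}\!\int_0^t (\id+A_t-A_s)\Ric_{\ptr_s}Q_sv\,ds \right\rangle\right].
\end{equation*}
It then suffices to establish the pathwise identity
\begin{equation*}
(\id+A_t)v - \tfrac{1}{2}\!\int_0^t (\id+A_t-A_s)\Ric_{\ptr_s}Q_sv\,ds = Q_tv + \int_0^t dA_s\,Q_sv,
\end{equation*}
which I expect to be the main obstacle. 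The key input is a Stratonovich product rule: since $s\mapsto Q_sv$ has bounded variation, $\circ d(A_sQ_sv) = dA_s\,Q_sv + A_s\,dQ_sv$, and combining this with \eqref{Q-equation} gives
\begin{equation*}
A_tQ_tv = \int_0^t dA_s\,Q_sv - \tfrac{1}{2}\!\int_0^t A_s\,\Ric_{\ptr_s}Q_sv\,ds.
\end{equation*}
Substituting this identity together with $Q_tv = v-\tfrac12\!\int_0^t\Ric_{\ptr_s}Q_sv\,ds$ into the left-hand side and splitting the factor $(\id+A_t-A_s)$ into its three summands yields the desired equality after a short rearrangement.

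For (b), apply Theorem~\ref{IBPD}(b) to $F(\omega)=f(\omega_T)$. Directly from the definition of $D_hF$,
\begin{equation*}
D_hF = \langle DF,h\rangle_{\mathbb{H}} = \big\langle \ptr_T^{-1}df|_{X_T},\ h_T + \textstyle\int_0^T dA_s\,h_s\big\rangle,
\end{equation*}
which already matches the right-hand side of the claimed identity. It remains to check that the candidate $h_s = Q_s\!\int_0^s Q_r^{-1}\,dk_r$ is admissible and converts the martingale on the right-hand side of Theorem~\ref{IBPD}(b) into $\langle k,B\rangle_{\mathbb{H}}$. Adaptedness is immediate since $Q_s$ depends only on $X^x_{[0,s]}$, and $h_0=0$; differentiating and using \eqref{Q-equation} gives
\begin{equation*}
\dot h_s = \dot Q_sQ_s^{-1}h_s + \dot k_s = -\tfrac{1}{2}\Ric_{\ptr_s}h_s + \dot k_s,
\end{equation*}
so $\dot h_s + \tfrac12\Ric_{\ptr_s}h_s = \dot k_s$. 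Consequently $\int_0^T\langle \dot h_s+\tfrac12\Ric_{\ptr_s}h_s, dB_s\rangle_g = \int_0^T\langle \dot k_s, dB_s\rangle_g = \langle k,B\rangle_{\mathbb{H}}$, completing the argument.
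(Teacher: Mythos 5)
Your proof is correct, but part (a) takes a slightly different route from the paper's. The paper obtains (a) directly from Lemma~\ref{lemma:DampGradient}, which gives $dP_tf|_x=\E\bigl[\hat Q_t^*\hptr_t^{-1}df|_{X_t}\bigr]=\E\bigl[\langle\ptr_t^{-1}df|_{X_t},U_t\hat Q_t v\rangle\bigr]$, and then invokes the identity \eqref{hatQ-eq}, $U_t\hat Q_t=Q_t+\int_0^t dA_rQ_r$, already established in Lemma~\ref{lemma:Conversion}. You instead specialize Theorem~\ref{IBPD}(a) to the one-time cylindrical function $F=f(\omega_t)$ and then verify the pathwise identity
$(\id+A_t)v-\tfrac12\int_0^t(\id+A_t-A_s)\Ric_{\ptr_s}Q_sv\,ds=Q_tv+\int_0^t dA_sQ_sv$
by hand via the product rule $A_tQ_tv=\int_0^t dA_sQ_sv+\int_0^t A_s\,dQ_sv$ (legitimate here since $Q$ has bounded variation, so no covariation term appears). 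Your computation checks out, and it is in substance the inverse of the conversion of Theorem~\ref{IBPtildeD}(b) that the paper used to produce Theorem~\ref{IBPD}(a) in the first place; your route has the modest advantage of never mentioning the adjoint connection, $U_t$ or $\hat Q_t$, at the cost of redoing algebra the paper already has in \eqref{hatQ-eq}. There is no circularity, since Theorem~\ref{IBPD} is proved before the corollary. Part (b) is exactly the paper's argument: apply Theorem~\ref{IBPD}(b) with $F=f(\omega_T)$ and note $\dot h_t+\tfrac12\Ric_{\ptr_t}h_t=\dot k_t$; your extra check that $h$ is adapted and determined by the deterministic $k$ is consistent with how the paper itself deploys that theorem.
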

We note that the result in (a) has already appeared in \cite{GT16}. We show this formula by proving the corresponding derivative formula and integration by parts formula in Theorem~\ref{Dev-Intr-formulas} for the damped gradient.

\subsection{The damped gradient on path space} We define \emph{the damped gradient} $\tilde DF$ similarly to the formula in Riemannian geometry, but using parallel transport of the adjoint connection. We use the connection $\nabla = \nabla^{g,V}$ and define $\Ric$ as in \eqref{Ric}. Define $\hptr_{s,t}: T_{X_s} M \to T_{X_t}M$ as parallel transport along $X_t$ with respect to~$\hat \nabla$, the adjoint of~$\nabla$, and write $\hptr_t = \hptr_{0,t}$. We first introduce $\hat Q_{s,t} : T_{X_s} M \to T_{X_s} M$, $s \leq t$,
$$\frac{d}{dt} \hat Q_{s,t} = - \frac{1}{2}\Ric_{\hptr_{s,t}}  \hat Q_{s,t}, \quad \hat Q_{s,s} = \id_{T_{X_s} M}.$$
We note that if $\hat Q_t = \hat Q_{0,t}$, then $\hat Q_{s,t} =  \hptr_s \hat Q_t \hat Q_s^{-1} \hptr_s^{-1}$ and for $s\leq r\leq t$,
$$\hat Q_{s,t} = \hptr_{s,r}^{-1} \hat Q_{r,t}  \hptr_{s,r}   \hat Q_{s,r}.$$
For $F \in \FC$ with $F(\omega) = f(\omega_{t_1}, \dots, \omega_{t_n})$, we define
$$\tilde{D}_tF(\omega) :=  \sum_{i=1}^n 1_{t\leq t_i} \sharp \ptr_{t}^{-1} \hat Q_{t,t_i}^* \hptr_{t, t_i}^{-1} d_if|_{(\omega_{t_1}, \dots, \omega_{t_n})},$$
and furthermore, for any $k \in \mathbb{H}^T_x$,
$$\tilde D_k F: = \langle \tilde DF, k \rangle_{\mathbb{H}} := \int_0^T \tilde D_t F dk_t.$$
The next result clarifies the relationship between $DF$ and $\tilde D F$.

\begin{theorem} \label{IBPtildeD}
Assume that $V$ is metric preserving. Let $Q_t: T_xM \to T_x M$ be the solution to
$$Q_0 = \id_{T_xM}, \quad dQ_t = - \frac{1}{2} \Ric_{\ptr_t} Q_t dt.$$
\begin{enumerate}[\rm (a)]
\item For any $k \in \mathbb{H}^T_x$ and $F \in \FC$, if $h_t = Q_t \int_0^t Q_s^{-1} dk_s$, then
$$\tilde D_k F = D_h F,$$
\item For any $F \in \FC$,
$$\tilde D_t F= D_tF - \frac{1}{2} \int_t^T  (\Ric_{\ptr_s} Q_s Q_t^{-1})^* D_s F ds.$$
\end{enumerate}
\end{theorem}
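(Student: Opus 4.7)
For part (a), by linearity in each covector $d_if$ it suffices to treat the single-time cylindrical case $F(\omega)=f(\omega_T)$; the general case follows additively. Using the composition law $\hat Q_{t,T}=\hptr_t\hat Q_T\hat Q_t^{-1}\hptr_t^{-1}$ together with $\hptr_{t,T}=\hptr_T\hptr_t^{-1}$, I unpack the definition of $\tilde D_tF$ to obtain
\[
\tilde D_kF = df|_{X_T}\Bigl(\hptr_T\hat Q_T\!\int_0^T\!\hat Q_t^{-1}\Psi_t\,\dot k_t\,dt\Bigr), \qquad \Psi_t:=\hptr_t^{-1}\ptr_t,
\]
while $D_hF = df|_{X_T}\bigl(\ptr_T\bigl(h_T+\int_0^T\circ dA_t\,h_t\bigr)\bigr)$. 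Since $df|_{X_T}$ is arbitrary, the claim in (a) is equivalent to the pathwise vector identity
\[
\Phi_TM_T = \int_0^T \Phi_t\dot k_t\,dt \quad\text{in } T_xM,
\]
where $\Phi_t:=\hat Q_t^{-1}\Psi_t$ and $M_t:=h_t+\int_0^t \circ dA_s\,h_s$.

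To establish this, set $N_t:=\Phi_tM_t-\int_0^t\Phi_s\dot k_s\,ds$, so that $N_0=0$, and show $\circ dN_t\equiv 0$ by a direct Stratonovich computation. The key ingredients are: (i) $\circ d\Psi_t = -\Psi_t\circ dA_t$, obtained by differentiating $\hptr_t\Psi_t=\ptr_t$ and using $D_t\hptr_t = \mathbf{T}(\ptr_t\circ dB_t,\hptr_t\,\cdot)$; (ii) the algebraic identity $\Ric_{\hptr_t}\Psi_t = \Psi_t\Ric_{\ptr_t}$ (both sides equal $\hptr_t^{-1}\Ric\,\ptr_t$), which together with $\tfrac{d}{dt}\hat Q_t^{-1} = \tfrac12\hat Q_t^{-1}\Ric_{\hptr_t}$ gives $\circ d\Phi_t = \tfrac12\Phi_t\Ric_{\ptr_t}\,dt - \Phi_t\circ dA_t$; (iii) the damping relation $\dot k_t = \dot h_t+\tfrac12\Ric_{\ptr_t}h_t$ coming from the ODE defining $h$ in terms of $k$; and (iv) the torsion constraints $\mathbf{T}(H,H)\subseteq V$ and $\mathbf{T}(H,V)=0$, the second of which forces $\Phi_t\circ dA_t\,V_t = 0$ for $V_t:=M_t-h_t=\int_0^t\circ dA_s h_s \in V_x$. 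Inserting (i)--(iv) into $\circ dN_t = \circ d\Phi_t\cdot M_t + \Phi_t\circ dM_t - \Phi_t\dot k_t\,dt$, the $\Phi_t\dot h_t$ terms and the $\tfrac12\Phi_t\Ric_{\ptr_t}h_t$ terms cancel, and the metric-preserving hypothesis on $V$ is used to close the remaining $V_t$-contributions, yielding $\circ dN_t=0$.

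For part (b), I derive it from (a) via Fubini. Starting from $D_hF = \int_0^T \langle D_tF,\dot h_t\rangle_g\,dt$, substitute $\dot h_t = \dot k_t - \tfrac12\Ric_{\ptr_t}h_t$ and $h_t = Q_t\int_0^t Q_s^{-1}\dot k_s\,ds$, then swap the order of integration in the resulting double integral to obtain
\[
D_hF = \int_0^T\Bigl\langle D_tF - \tfrac12\!\int_t^T(\Ric_{\ptr_s}Q_sQ_t^{-1})^*D_sF\,ds,\;\dot k_t\Bigr\rangle_g dt.
\]
Equating this with $\tilde D_kF = \int_0^T\langle\tilde D_tF,\dot k_t\rangle_g\,dt$ by (a) and invoking the arbitrariness of the deterministic direction $\dot k_t\in H_x$ yields the pathwise identity in (b). The main obstacle is Step~2: verifying the cancellation $\circ dN_t=0$ requires careful bookkeeping of Stratonovich corrections and a delicate interplay between the torsion identities, the commutation $\Ric_{\hptr_t}\Psi_t=\Psi_t\Ric_{\ptr_t}$, and the metric-preserving hypothesis on the complement~$V$.
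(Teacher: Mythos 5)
Your proposal is correct and follows essentially the same route as the paper: part (a) reduces, via the composition law for $\hat Q_{s,t}$, to the pathwise identity $U_T\hat Q_T\int_0^T\hat Q_t^{-1}U_t^{-1}dk_t=h_T+\int_0^T dA_sh_s$ (the paper's Lemma~\ref{lemma:Conversion}), which the paper proves by the same Stratonovich/ODE computation you outline (you differentiate the inverse flow $\Phi_t=(U_t\hat Q_t)^{-1}$ while the paper differentiates $U_t\hat Q_t$ itself, using the identical ingredients $A_t^2=0$, $\mathbf{T}(H,V)=0$ and $\Ric|_V=0$ from metric preservation). Part (b) is the same Fubini swap as in the paper.
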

Next,
with respect to the damped gradient, we  establish the gradient formula and
integration by parts formula on path space as follows.
\begin{theorem}\label{Dev-Intr-formulas}
Assume that $V$ is metric preserving.
\begin{enumerate}[\rm (a)]
\item (Derivative formula) For any $F\in \FC$ and $t >0$, we have
$$D_t \E_x[F|\mathscr{F}_t]=\E_x[\tilde D_tF|\mathscr{F}_t].$$

\item (The Clark-Oc\^one formula) For any $F \in \FC$, we have
$$F = \E_x[F] + \int_0^T \langle  \E_x[\tilde D_s F | \mathscr{F}_s] , dB_s^x \rangle.$$
\item (Integration by parts formula) For any $k \in \mathbb{H}_x^T$ and $F \in \FC$,
$$\mathbb{E}_x[\langle \tilde DF, k \rangle_{\mathbb{H}}] =\mathbb{E}_x\left[F \langle k, B \rangle_{\mathbb{H}} \right]. $$
In particular, for any $k \in \mathbb{H}^T_x$ with $h_t = Q_t \int_0^t Q_s^{-1} dk_s$, we have
\begin{align} \label{IBPHatNabla}
\E\left[ f(X_T^x)\langle k, B \rangle_{\mathbb{H}} \right] & = \E\left[ \left\langle \ptr_T^{-1} df|_{X_T^x}  , U_T \hat Q_T \int_0^T \hat Q_s^{-1} U_s^{-1} dk_s  \right\rangle \right].\end{align}
\end{enumerate}
\end{theorem}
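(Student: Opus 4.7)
The three statements interlock, with (a) the main content from which (b) and (c) will follow by standard It\^o arguments. Once (a) is known, It\^o's formula applied to the martingale $M_t=\E_x[F\mid\mathscr F_t]$ for cylindrical $F$ identifies its stochastic integrand as $D_tM_t=\E_x[\tilde D_tF\mid\mathscr F_t]$, yielding the Clark--Oc\^one formula (b). Multiplying the Clark--Oc\^one representation by $\langle k,B\rangle_{\mathbb H}=\int_0^T\langle\dot k_s,dB_s^x\rangle_g$ for deterministic $k\in\mathbb H^T_x$ and invoking the It\^o isometry then gives
\begin{equation*}
\E_x[F\langle k,B\rangle_{\mathbb H}]=\E_x\!\int_0^T\!\langle\E_x[\tilde D_sF\mid\mathscr F_s],\dot k_s\rangle_g\,ds=\E_x[\langle\tilde DF,k\rangle_{\mathbb H}],
\end{equation*}
which is the general integration by parts formula in (c).

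For (a), I would first treat the one-time case $F=f(X_T)$. Then $\E_x[F\mid\mathscr F_t]=P_{T-t}f(X_t)$ is a cylinder function of $\omega_t$ alone, and from the definition of $D_t$ on cylindrical functions,
\begin{equation*}
D_t\E_x[F\mid\mathscr F_t]=\sharp\ptr_t^{-1}\,dP_{T-t}f|_{X_t}.
\end{equation*}
On the other hand, by the Markov property, $\E_x[\tilde D_tF\mid\mathscr F_t]=\sharp\ptr_t^{-1}\E^{X_t}[\hat Q_{T-t}^{*}\hptr_{T-t}^{-1}df|_{X_{T-t}}]$, where the inner expectation is taken along the flow restarted at $X_t$. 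Matching the two sides amounts to the damped Bismut--Elworthy--Li formula
\begin{equation*}
dP_sf|_y(v)=\E\bigl[\langle v,\,\hat Q_s^{*}\hptr_s^{-1}df|_{X_s^y}\rangle\bigr],\qquad v\in T_yM,\ s>0.
\end{equation*}
For a general multi-time cylinder $F=f(\omega_{t_1},\dots,\omega_{t_n})$ with $t_j\le t<t_{j+1}$, I would argue by backward iteration: conditioning rewrites $\E_x[F\mid\mathscr F_t]$ as a nested semigroup expression in the last $n-j$ variables, and iterated application of the damped BEL formula together with the composition identity $\hat Q_{s,t}=\hptr_{s,r}^{-1}\hat Q_{r,t}\hptr_{s,r}\hat Q_{s,r}$ reassembles the sum $\sum_{i\ge j+1}\sharp\ptr_t^{-1}\hat Q_{t,t_i}^{*}\hptr_{t,t_i}^{-1}d_if|_{(X_{t_1},\dots,X_{t_n})}$ defining $\E_x[\tilde D_tF\mid\mathscr F_t]$. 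The bookkeeping across the frame changes $\ptr$ and $\hptr$ over successive subintervals is the main labour in this step.

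The main \emph{conceptual} obstacle is the damped Bismut--Elworthy--Li formula itself. My plan is the classical martingale approach: set $N_u:=\hat Q_u^{*}\hptr_u^{-1}\,dP_{s-u}f|_{X_u^y}\in T_y^*M$ for $u\in[0,s]$ and show that $N$ is a martingale, so that $N_0=\E N_s$ delivers the formula. The It\^o expansion of $N_u$ combines three contributions: the time derivative $\partial_uP_{s-u}f=-\tfrac12LP_{s-u}f$, the covariant It\^o formula for $\hptr_u^{-1}\alpha_{X_u^y}$ with $\alpha=dP_{s-u}f$, and the ODE $d\hat Q_u=-\tfrac12\Ric_{\hptr_u}\hat Q_u\,du$. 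Proving that the drift vanishes reduces to a Weitzenb\"ock-type commutation identity between the connection sub-Laplacian $L$ on functions and the corresponding horizontal second-order operator on one-forms along $\hat\nabla$-parallel transport, with correction term exactly $\Ric_H$; the ODE for $\hat Q_u$ is engineered to absorb this correction. The hypothesis that $V$ is metric preserving is essential here so that $\nabla^{g,V}$ is compatible with $(H,g)$ and $\ptr_u$ is an $H$-isometry, while parallel-transporting covectors via $\hat\nabla$ rather than $\nabla$ ensures the hat-covariant It\^o formula has the right shape for the $\Ric$-correction to appear with a sign matching the ODE for $\hat Q$.

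Finally, the concrete identity \eqref{IBPHatNabla} at the end of (c) with $F=f(X_T^x)$ follows by substituting $\tilde D_sF=\sharp\ptr_s^{-1}\hat Q_{s,T}^{*}\hptr_{s,T}^{-1}df|_{X_T^x}$ into the first identity of (c), using $\langle\sharp\alpha,w\rangle_g=\langle\alpha,w\rangle$ to eliminate $\sharp$, and applying the factorization $\hptr_{s,T}\hat Q_{s,T}=\hptr_T\hat Q_T\hat Q_s^{-1}\hptr_s^{-1}$ that is immediate from the composition rule for $\hat Q_{\cdot,\cdot}$. Writing $U_t:=\ptr_t^{-1}\hptr_t$ so that $\ptr_T^{-1}\hptr_T=U_T$ and $\hptr_s^{-1}\ptr_s=U_s^{-1}$, the stochastic integral collapses to $U_T\hat Q_T\int_0^T\hat Q_s^{-1}U_s^{-1}dk_s$, yielding \eqref{IBPHatNabla}.
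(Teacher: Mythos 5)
Your proposal follows essentially the same route as the paper: the core is the damped derivative formula for one-time cylinder functions, proved by showing that $N_u=\hat Q_u^{*}\hptr_u^{-1}dP_{s-u}f|_{X_u}$ is a martingale via the Weitzenb\"ock identity for $\hat\nabla$ (where metric preservation of $V$ kills the extra terms), followed by induction over the time marginals using the Markov property and the composition rule for $\hat Q_{\cdot,\cdot}$, with (b) and (c) then obtained from martingale representation and the It\^o isometry exactly as in the paper. The plan is correct and matches the paper's proof in all essential steps, including the derivation of \eqref{IBPHatNabla} via $U_T\hat Q_T\int_0^T\hat Q_s^{-1}U_s^{-1}dk_s$.
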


We will prove Theorem~\ref{IBPtildeD} and Theorem~\ref{Dev-Intr-formulas} in the next subsections. Now we show how Theorem~\ref{IBPD} follows from these results.
\begin{proof}[Proof of Theorem \ref{IBPD}]
We can prove (a) directly by using Theorem \ref{Dev-Intr-formulas} (a) and  Theorem \ref{IBPtildeD} (b). For (b),
\begin{align*}
\E_x D_hF&=\E_x\tilde{D}_kF=\mathbb{E}\left[F \langle k, B \rangle_{\mathbb{H}} \right]\\
&=\mathbb{E}_x\left[F \int_0^T \langle \dot h_t + \frac{1}{2} \Ric_{\ptr_t} h_t, dB^x_t \rangle_{g}\right],
\end{align*}
where the last equation follows from
$dk_t= dh_t + \frac{1}{2} \Ric_{\ptr_t} h_t dt$.
\end{proof}

\subsection{Proof of Theorem~\ref{IBPtildeD}} Let $(M,H,g)$ be a sub-Riemannian manifold with a metric preserving complement $V$. In all the steps below, we will consider the connection $\nabla = \nabla^{g,V}$.

 Define the tensor $\Ric$ relative to $\nabla$ as in~\eqref{Ric}. To prove Theorem \ref{IBPtildeD}, we first observe the $\Ric$ vanish outside the horizontal bundle $H$.
\begin{lemma}\label{P-Ric}
Write $\Ric_H = \Ric|_H$. Then
$$\Ric v = \Ric_H \pr_H v = \pr_H \Ric_H \pr_H.$$
\end{lemma}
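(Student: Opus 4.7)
The lemma asserts two things: (i) that $\Ric$ sends all of $TM$ into $H$, and (ii) that $\Ric$ annihilates $V$. Claim (i) is immediate from Proposition~\ref{pro:NablaGV}(a): since $H$ is parallel under $\nabla$, the endomorphism $\mathbf{R}(X, Y)$ preserves $H$ for every $X, Y \in TM$, so each summand $\mathbf{R}(e_i, v) e_i$ lies in $H$ and hence $\Ric(v) \in H$. Once (ii) is established, decomposing $v = \pr_H v + \pr_V v$ gives $\Ric v = \Ric_H \pr_H v$, while (i) yields $\pr_H \Ric_H \pr_H = \Ric_H \pr_H$.

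For (ii), fix $v \in V_x$ and an orthonormal frame $\{e_i\}$ of $H$ in a neighbourhood of $x$. I aim to show $g(\Ric(v), e_j) = 0$ for every $j$. Because $V$ is metric preserving, $\nabla$ is compatible with $(H,g)$ (not merely horizontally compatible), so $\mathbf{R}(X, Y)|_H \in \mathfrak{so}(H)$ for all $X, Y \in TM$. Hence
\[
g(\Ric(v), e_j) = -\sum_i g(\mathbf{R}(e_i, v) e_i, e_j) = \sum_i g(\mathbf{R}(e_i, v) e_j, e_i).
\]
My plan is to apply the first Bianchi identity with torsion at the triple $(e_i, v, e_j)$, namely
\[
\sum_{\mathrm{cyc}} \mathbf{R}(X, Y) Z \;=\; \sum_{\mathrm{cyc}} (\nabla_X \mathbf{T})(Y, Z) + \sum_{\mathrm{cyc}} \mathbf{T}(\mathbf{T}(X, Y), Z).
\]
Properties (a), (c), and (d) of Proposition~\ref{pro:NablaGV} force most terms to collapse: every instance of $\mathbf{T}(\text{something in }H, v)$ vanishes by (d); $(\nabla_{e_i}\mathbf{T})(v, e_j)$ and $(\nabla_{e_j}\mathbf{T})(e_i, v)$ vanish outright; and $(\nabla_v \mathbf{T})(e_j, e_i)$ lies in $V$ since (a) says $V$ is parallel. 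On the LHS, (a) gives $\mathbf{R}(e_j, e_i) v \in V$, and the skew-symmetry on $H$ kills $\sum_i g(\mathbf{R}(v, e_j) e_i, e_i)$. Pairing with $e_i$, summing, and using once more that $\nabla$ preserves $V$ to compute $\pr_H \mathbf{T}(W, v) = -\pr_H[W, v]$ for $W \in V$, the Bianchi identity reduces to
\[
g(\Ric(v), e_j) \;=\; \sum_i g\bigl([\pr_V[e_j, e_i], v],\, e_i\bigr).
\]

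The main obstacle is the closing step: showing this trace vanishes. The plan is to expand $\pr_V[e_j, e_i] = [e_j, e_i] - \pr_H[e_j, e_i]$ and then apply Jacobi to rewrite $[[e_j, e_i], v] = [e_j, [e_i, v]] - [e_i, [e_j, v]]$. The three resulting trace sums are to be evaluated using the defining relations $\nabla_X Y = \bnabla_X Y$ for $X, Y \in H$, $\nabla_v X = \pr_H[v, X]$ for $X \in H$, and $\nabla_X v = \pr_V[X, v]$ for $X \in H$ (from Proposition~\ref{pro:NablaGV}), together with the orthonormal-frame identity $\sum_i g(\nabla_Z e_i, e_i) = \tfrac{1}{2} Z\sum_i g(e_i, e_i) = 0$. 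The metric-preservation hypothesis enters decisively through its reformulation that $\nabla_v|_H = \pr_H[v,\cdot\,]$ is skew-adjoint on $(H, g)$: this is exactly what pairs off the remaining bracket terms. If the direct Jacobi bookkeeping proves too intricate, a cleaner alternative is to use $\mathbf{R}(e_i, v) e_i = \nabla^2_{e_i, v} e_i - \nabla^2_{v, e_i} e_i$ (legitimate since $\mathbf{T}(e_i, v) = 0$) in a frame $\nabla$-parallel at $x$, where all first-derivative terms vanish at $x$ and the remaining second-order contributions reorganize into a difference that cancels by compatibility.
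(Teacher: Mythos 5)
Your part (i) ($\Ric(TM)\subseteq H$ because $H$ is $\nabla$-parallel) is exactly the paper's argument and is fine, as is your reduction of part (ii): the skew-symmetry of $\mathbf{R}(\cdot,\cdot)|_H$ from compatibility, the vanishing of $\mathbf{T}(H,V)$ and of $(\nabla_H\mathbf{T})(V,H)$, and the fact that $(\nabla_v\mathbf{T})(e_j,e_i)$ and $\mathbf{R}(e_j,e_i)v$ lie in $V$ all check out, so the Bianchi identity does leave you with
$g(\Ric(v),e_j)=\sum_i g\bigl(\pr_H[\pr_V[e_j,e_i],v],e_i\bigr)$.
The genuine gap is that you never close this: you offer a ``plan'' (Jacobi plus the skew-adjointness of $\pr_H[v,\cdot]$ on $H$) and a vague fallback, but neither is carried out, and neither is likely to work as stated. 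The residual term is governed by $\pr_H[V,V]$ --- a bracket of two sections of $V$ --- whereas the metric-preserving hypothesis, in the reformulation you invoke, only controls $\pr_H[V,H]$ (it says $\pr_H[Z,\cdot]$ is a metric partial connection on $H$ for $Z\in\Gamma(V)$). Since the paper explicitly allows non-integrable $V$ (e.g.\ the $\SO(4)$ example, where $\pr_H[V,V]\neq 0$), this term does not vanish for structural reasons accessible to your listed ingredients, and your Jacobi expansion only reshuffles it into other second-order bracket expressions without an evident cancellation. A proof that stops at ``the remaining contributions reorganize into a difference that cancels by compatibility'' is not a proof of the one identity that actually carries the content of the lemma.

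For comparison, the paper's route is different and stronger: it does not trace first. In the proof of Lemma~\ref{lemma:SmoothPathSpaceAny} it studies the full trilinear form $S_z(v_1,v_2,v_3)=\langle\mathbf{R}(v_1,z)v_2,v_3\rangle_g$ for $v_i\in H$, $z\in V$, shows via the first Bianchi identity that it is symmetric in $(v_1,v_2)$ (all torsion remainders being argued away there), and combines this with the antisymmetry in $(v_2,v_3)$ coming from $q\equiv 0$ in the metric-preserving case; a form with these two symmetries vanishes identically, so $\langle\mathbf{R}(H,V)H,H\rangle=0$ pointwise and a fortiori $\Ric(V)=0$. If you want to repair your argument, the efficient fix is to prove the uncontracted symmetry $\langle\mathbf{R}(v_1,z)v_2,v_3\rangle=\langle\mathbf{R}(v_2,z)v_1,v_3\rangle$ first and then run the symmetric/antisymmetric cancellation, rather than trying to kill the traced bracket term directly; note, though, that even in that route the Bianchi remainder $\langle\mathbf{T}(\mathbf{T}(v_2,v_1),z),v_3\rangle=-\langle\pr_H[\pr_V[v_2,v_1],z],v_3\rangle$ reappears and must be addressed, so your bookkeeping has in fact isolated precisely the term that needs a genuine argument.
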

\begin{proof}
We note that since $H$ is parallel with respect to $\nabla$, we have $\mathbf{R}( \, \cdot \, , \, \cdot \,) v \in H_x$ for any $v \in H_x$, $x \in M$. It follows that $\Ric(TM) \subseteq H$. From the proof of Lemma~\ref{lemma:SmoothPathSpaceAny}, we also have that $\langle \mathbf{R}(v_1, z)v_1, v_2 \rangle_g = 0$ whenever $V$ is metric preserving, giving us $\Ric(V) = 0$.
\end{proof}
Next, we relate the damped gradient and the gradient by the following conversion formula.
\begin{lemma} \label{lemma:Conversion}
For any element in $k \in \mathbb{H}_x^T$, define $h_t = Q_t \int_0^t Q_s^{-1} dk_s$. We then have that
$$\ptr_t^{-1} \hptr_t \hat Q_t \int_0^t \hat Q_s^{-1} \hptr_s^{-1} \ptr_s dk_s = h_t + \int_0^t dA_s h_s.$$
\end{lemma}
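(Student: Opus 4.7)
The plan is to view the identity as a stochastic realization of the smooth relation in Proposition~\ref{lemma:SmoothPathSpace}, namely $y_t = h_t + \int_0^t dA_s h_s$ when $Y_t = \ptr_t y_t = \hptr_t \hat y_t$ along a smooth horizontal curve, with $h_t = \pr_H y_t$. Accordingly, I would introduce the random vector field
\[
Y_t := \hptr_t \hat h_t, \qquad \hat h_t := \hat Q_t \int_0^t \hat Q_s^{-1} \hptr_s^{-1} \ptr_s\,dk_s,
\]
along $X_t^x$, so that the left-hand side of the lemma is precisely $y_t := \ptr_t^{-1} Y_t$ and the right-hand side is the smooth expression to be matched. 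The task then reduces to verifying that $y_t$ satisfies the Stratonovich SDE $\circ dy_t = dh_t + dA_t\, h_t$ with $y_0 = 0$.

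First I would compute the $\hat\nabla$-Stratonovich covariant differential of $Y_t$. Because $k$ is absolutely continuous, $\hat h_t$ solves a pathwise ODE with no martingale part, and a short calculation gives $\circ \hat DY_t = \hptr_t \circ d\hat h_t = \ptr_t\,dk_t - \tfrac12\Ric(Y_t)\,dt$. Next I would convert to $\nabla$ using the stochastic form of \eqref{Adjoint}, $\circ DY_t = \circ \hat DY_t + \mathbf{T}(\circ dX_t^x, Y_t)$, and combine with $\circ dX_t^x = \ptr_t \circ dB_t^x$ to obtain
\[
\circ dy_t = dk_t - \tfrac12 \ptr_t^{-1}\Ric(Y_t)\,dt + \mathbf{T}_{\ptr_t}(\circ dB_t^x, y_t).
\]

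The metric-preserving hypothesis enters at this stage. By Lemma~\ref{P-Ric} the Ricci operator annihilates $V$, and since both $H$ and $V$ are $\nabla$-parallel (Proposition~\ref{pro:NablaGV}), the term $\ptr_t^{-1}\Ric(Y_t)$ equals $\Ric_{\ptr_t}(\pr_H y_t)$. Since $\mathbf{T}(H,V)=0$ and $\mathbf{T}(H,H)\subseteq V$, the torsion term reduces to $dA_t(\pr_H y_t)$ and moreover takes values in $V$. Projecting the above SDE onto $H$ (using $dk_t\in H$ and $\Ric_{\ptr_t}H\subseteq H$) then yields a closed ODE for $\pr_H y_t$ with the same initial condition and coefficients as the ODE defining $h_t$; uniqueness forces $\pr_H y_t = h_t$. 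Substituting back produces $\circ dy_t = dh_t + dA_t\, h_t$, and integrating from $0$ completes the proof.

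The main obstacle is precisely the projection step: without the metric-preserving assumption (and the attendant vanishing properties of $\mathbf{T}$), the equation for $\pr_H y_t$ would couple to an unknown vertical component of $y_t$, and the identification $\pr_H y_t = h_t$ by uniqueness would fail. This is the stochastic shadow of the reason the smooth formula in Proposition~\ref{lemma:SmoothPathSpace} requires the structural features of $\nabla^{g,V}$ in the first place.
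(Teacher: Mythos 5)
Your proof is correct and follows essentially the same route as the paper: both reduce the identity to the observation that the left-hand side $a_t = U_t\hat Q_t\int_0^t\hat Q_s^{-1}U_s^{-1}dk_s$ (with $U_t=\ptr_t^{-1}\hptr_t=\id+A_t$) solves the linear equation $da_t = -\tfrac12\Ric_{\ptr_t}a_t\,dt + dA_t\,a_t + dk_t$, and then identify the solution with $h_t+\int_0^t dA_s\,h_s$ by uniqueness, exploiting $\Ric(V_x)=0$, $\mathbf{T}(H,V)=0$ and $\mathbf{T}(H,H)\subseteq V$. The only difference is presentational: you obtain this equation by covariant differentiation of $Y_t=\hptr_t\hat h_t$ followed by a projection onto $H$, whereas the paper computes $U_t\hat Q_t = Q_t+\int_0^t dA_r\,Q_r$ and its logarithmic derivative directly.
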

\begin{proof}
Define $U_t = \ptr_t^{-1} \hptr_t$. We note first that $dU_t =  \mathbf{T}_{\ptr_t}(\circ dB_t, U_t \, \cdot \,)$ and $U_0 = \id$, giving us that $U_t = \id + A_t$. Since $A_t^2 = 0$, we have that $U_t^{-1} = \id - A_t$. We will use this to find a formula for $\hat Q_t$ by
\begin{align*}
d \hat Q_t &= - \frac{1}{2} \Ric_{\hptr_t} \hat Q_t \, dt= - \frac{1}{2} (\id- A_t) \Ric_{\ptr_t} (\id + A_t) \hat Q_t \, dt \\
& = - \frac{1}{2} \Ric_{\ptr_t} \hat Q_t \, dt + \frac{1}{2} A_t \Ric_{\ptr_t} \hat Q_t \, dt.
\end{align*}
Hence, we have that $\hat Q_t = Q_t + \frac{1}{2} \int_0^t A_s \Ric_{\ptr_s} Q_s \, ds = Q_t - \int_0^t A_s dQ_s$. Since $Q_t w =w$ for any $w \in V$, the inverse of $\hat Q_t$   is $\hat Q_t^{-1}= (\id + \int_0^t A_s dQ_s) Q_t^{-1} $.

We use these identities to compute
\begin{align}\label{hatQ-eq}
U_t \hat Q_t = Q_t + \int_0^t dA_r Q_r, \quad (U_t \hat Q_t)^{-1} = Q_t^{-1} - \int_0^t dA_r Q_r Q_t^{-1},
\end{align}
and hence,
\begin{align} \label{UQ}
(d(U_t \hat Q_t))(U_t \hat Q_t)^{-1}  & = \left(  -\frac{1}{2} \Ric_{\ptr_t} Q_t dt+ dA_t Q_t \right) (U_t Q_t)^{-1} \\
 & =  -\frac{1}{2} \Ric_{\ptr_t} dt+ dA_t . \nonumber
\end{align}
If we write $a_t =  U_t \hat Q_t \int_0^t \hat Q_s^{-1} U_s^{-1} dk_s$, then
$$da_t = d(U_t \hat Q_t) (U_t \hat Q_t)^{-1} a_t + dk_t = -\frac{1}{2} \Ric_{\ptr_t} a_t dt+ dA_t a_t + dk_t, \quad a_0 = 0. $$
It follows that $a_t =  h_t + \int_0^t dA_s h_s$ since $h_t$ is the solution of $dh_t + \frac{1}{2} \Ric_{\ptr_t} h_t dt = dk_t$.
\end{proof}
Using this lemma, we prove Theorem \ref{IBPtildeD} as follows.
\begin{proof}[Proof of Theorem \ref{IBPtildeD}]
We consider $F \in \FC$. Note that
\begin{align*}
\int_0^T\tilde D_t F(\omega) dk_t & =\int_0^T \sum_{i=1}^n 1_{t \leq t_i} \langle \ptr_{t}^{-1} \hat Q_{t,t_i} \hptr_{t,t_i}^{-1} d_i f|_{(\omega_{t_1}, \dots, \omega_{t_n})} , d k_t \rangle_g \\
& = \int_0^T \sum_{i=1}^n 1_{t \leq t_i} \langle \ptr_{t_i}^{-1} d_i f|_{(\omega_{t_1}, \dots, \omega_{t_n})} ,  U_{t_i}\hat Q_{t_i} \hat Q_t^{-1} U_t^{-1} dk_t \rangle_g .
\end{align*}
Hence, we have that $\int_0^T \langle \tilde D_t F(\omega) , \dot{k} \rangle_g \,ds= \int_0^T \langle D_t F, \dot{h} \rangle_g\,ds$ by Lemma \ref{lemma:Conversion}, which then gives (a).

The relationship between $D_tF$ and $\tilde{D}_tF$ can be observed from
\begin{align*}
\int_0^T(\tilde{D}_tF-D_tF)\dot{k}_t \,dt&=-\frac{1}{2}\int_0^TD_tF \Ric_{\ptr_t}Q_t\int_0^tQ_s^{-1}dt\\
&=-\frac{1}{2}\int_0^T\int_s^TD_tF\Ric_{\ptr_t}Q_tQ_s^{-1}\,dtdk_s,
\end{align*}
which then implies (b).
\end{proof}

\subsection{Proof of Theorem~\ref{Dev-Intr-formulas}} \label{sec: Proof:Dev-Intr-formulas}
We first prove Theorem~\ref{Dev-Intr-formulas}~(a) for the case $n=1$.
\begin{lemma} \label{lemma:DampGradient}
For any $x \in M$, consider $\hat Q_t^x = \hat Q_t$ as the solution of
$$\frac{d}{dt} \hat Q_{t} = - \frac{1}{2} \Ric_{\hptr_t} \hat Q_t\,, \qquad \hat Q_0  = \id_{T_xM},$$
and let $\hat Q_t^*: T^*_xM \to T^*_xM$ be its dual. Define $U_t = \ptr_t^{-1} \hptr_t$.
Then we have 
$$dP_t f|_x = \E\left[ \hat Q_t^* \hptr_t^{-1} df|_{X_t^x}\right] =  \E\left[ \hat Q_t^* U_t^{-1} \ptr_t^{-1} df|_{X_t^x}\right].$$

\end{lemma}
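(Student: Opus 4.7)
The second equality is immediate from $\hptr_t = \ptr_t U_t$, which gives $\hptr_t^{-1} = U_t^{-1}\ptr_t^{-1}$. So the real content is the first equality, and the plan is to establish it by a martingale argument: define the $T_x^*M$-valued process
$$N_s := \hat Q_s^*\,\hptr_s^{-1}\,dP_{t-s}f|_{X_s^x}, \qquad 0\le s\le t,$$
so that $N_0 = dP_t f|_x$ is deterministic and $N_t = \hat Q_t^*\,\hptr_t^{-1}df|_{X_t^x}$. If $N$ is shown to be a martingale, then taking expectations at the endpoints yields
$$dP_t f|_x = \E[N_0] = \E[N_t] = \E\bigl[\hat Q_t^*\hptr_t^{-1}df|_{X_t^x}\bigr],$$
as desired.

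To show the martingale property, I would compute $dN_s$ via It\^o's formula, treating it as a semimartingale in the fixed fibre $T_x^*M$. Setting $g_s := P_{t-s}f$, the factor $\hat Q_s^*$ contributes the deterministic drift $-\tfrac12\hat Q_s^*\Ric^*_{\hptr_s}\,ds$ coming from its defining ODE. For the $\hat\nabla$-parallel transported covector $\hptr_s^{-1}dg_s|_{X_s^x}$, three contributions appear: the time dependence $\partial_s g_s = -\tfrac12 Lg_s$ gives a drift $-\tfrac12\hptr_s^{-1}d(Lg_s)|_{X_s^x}\,ds$; the It\^o correction along the diffusion produces a horizontal-trace second-order term $\tfrac12\hptr_s^{-1}(\tr_H \hat\nabla^2 dg_s)|_{X_s^x}\,ds$; and the stochastic part is a genuine martingale built from $\hat\nabla dg_s$ against $dB_s^x$. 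Assembling all terms, the drift of $N_s$ equals
$$\tfrac12\,\hat Q_s^*\,\hptr_s^{-1}\bigl[\tr_H \hat\nabla^2 dg_s - d(Lg_s) - \Ric^* dg_s\bigr]\big|_{X_s^x}\,ds.$$

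The decisive step, and the main obstacle, is to verify the sub-Riemannian Bochner--Weitzenb\"ock identity
$$\tr_H \hat\nabla^2 d\phi \;=\; d(L\phi) + \Ric^*\,d\phi, \qquad \phi\in C^\infty(M),$$
which forces this drift to vanish. I would prove it by writing $\hat\nabla = \nabla + \mathbf{T}$ to convert $\hat\nabla^2$-terms into $\nabla^2$-terms plus explicit torsion corrections, and then applying the standard commutator identity between $d$ and the horizontal trace of the Hessian. The inputs are precisely the ones the paper has set up: horizontal compatibility of $\nabla$, so that $L = \tr_H \nabla^2$ on functions; the torsion conditions $\mathbf{T}(H,H)\subseteq V$ and $\mathbf{T}(H,V)=0$ from Proposition~\ref{pro:NablaGV}; and metric preservation of $V$, which through Lemma~\ref{P-Ric} confines $\Ric$ to the horizontal bundle, so that the vertical torsion corrections do not produce spurious terms. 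With the identity in hand, the drift vanishes, so $N_s$ is a local martingale, and compactness of $M$ upgrades it to a true martingale, completing the proof.
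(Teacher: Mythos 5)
Your proposal is correct and follows essentially the same route as the paper: the paper's proof defines the identical process $\tilde N_s = \hat Q_s^*\hptr_s^{-1}dP_{t-s}f|_{X_s}$, invokes the adjoint Weitzenböck formula \eqref{GenWeitzHatNabla} (which with $V$ metric preserving reduces to exactly your identity $\hat L\, d\phi = dL\phi + \Ric^* d\phi$, since the tensor $q$ vanishes) to see that the drift cancels, and uses compactness to upgrade the local martingale to a true one. The only difference is that you propose to re-derive the Weitzenböck identity, which the paper has already established as Lemma~\ref{lemma:Weitz} in the appendix by the same torsion-commutator computation you outline (note only the sign convention $\hat\nabla_Y Z = \nabla_Y Z - \mathbf{T}(Y,Z)$).
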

\begin{proof}
For $t \in [0, T]$, consider the $T_x^* M$-valued process
$$\tilde N_s = \hat Q_s^* \hptr_s^{-1} dP_{t-s}f|_{X_s}.$$
 From \eqref{GenWeitzHatNabla} and the fact that $V$ is metric preserving, it follows that $\tilde N_s$ is a local martingale
\begin{eqnarray*}
d\tilde N_s & = &  \hat Q_s^* \hptr_s^{-1} \hat \nabla_{\ptr_s dB_s} dP_{t-s} f|_{X_s},
\end{eqnarray*}
and from our compactness assumption, it is a true martingale.
\end{proof}

\begin{proof}[Proof of Theorem \ref{Dev-Intr-formulas}]
 For part (a),
write $F(\omega)=f(\omega_{t_1},\dots, \omega_{t_n})$. We first consider the case when $t=0$. Write $\Phi(x) = \E_x[F]$. Then we need to prove
\begin{align*}
\sharp d\Phi=\E_x[\tilde D_0 F].
\end{align*}
By Lemma~\ref{lemma:DampGradient}, the desired assertion holds for $n=1$. Assume that
it holds for $n \geq 1$. We will prove that the assertion also holds for $n+1$. First let
\begin{align}\label{def-l}
g(x)=\E [f(x, X_{t_2-t_1}^x,X_{t_3-t_1}^x,\ldots, X_{t_{n+1}-t_1}^x)],\quad x\in M.
\end{align}
By our induction hypothesis,
\begin{align}\label{derivative-l}
dg(x)=\sum_{i=1}^{n+1}\E \left[\hat Q_{0,t_i-t_1}\hptr_{t_i-t_1}^{-1}d_if|_{(x, X_{t_2-t_1}^x,\ldots, X_{t_{n+1}-t_1}^x)}\right]
\end{align}
 for all $x\in M$. Using the strong Markov property of $X$, we look at the process $X$
 starting from $X_{t_1}^x$ at time $t_1$. From \eqref{derivative-l} and from the result at $n=1$,
\begin{align}\label{t0-derivative-formula}
&d \E f|_{X_{t_1}^x, X_{t_2}^x,\cdots, X_{t_{n+1}}^x}=d \E[g(X_{t_1}^x)]=\E \left[\hat Q_{0,t_1}\hptr_{t_1}^{-1}d g|_{X_{t_1}^x}\right]\notag\\
&=\sum_{i=1}^{n}\E \left[\hat Q_{0,t_1} \hptr_{t_1}^{-1} \hat Q_{t_1,t_i}\hptr_{t_1,t_i}^{-1}d_if|_{(X_{t_1}^x, X_{t_2}^x,\ldots, X_{t_{n+1}}^x)}\right] \notag\\
&=\sum_{i=1}^{n}\E \left[\hat Q_{0,t_i} \hptr_{t_i}^{-1}d_if|_{(X_{t_1}^x, X_{t_2}^x,\ldots, X_{t_{n+1}}^x)}\right] .
\end{align}
This completes the proof for $t=0$.

For $F(\omega) = f(\omega_{t_1}, \dots, \omega_{t_n})$, consider $G = \mathbb{E}_x[F| \mathscr{F}_t]$. If $t_{m-1} < t \leq t_{m} $, then $G(\omega) = g(\omega_{t_1}, \dots \omega_{t_{m-1}}, \omega_{t}) = \mathbb{E}_x[ f(\omega_{t_1} ,\dots, \omega_{t_{m-1}}, X_{t_{m}} , \dots, X_{t_n})| X_t=\omega_t]$.
Using the formula \eqref{t0-derivative-formula}, we obtain
\begin{align*}
&d_{\omega_t}g |_{(\omega_{t_1} , \dots, \omega_{t_{m-1}}, \omega_t)}\\
&=d_{\omega_t}\mathbb{E}[ f(\omega_{t_1} ,\dots, \omega_{t_{m-1}}, X_{t_{m}} , \dots, X_{t_n})| X_t=\omega_t]\\
&=d_{\omega_t}\mathbb{E}[ f(\omega_{t_1} ,\dots, \omega_{t_{m-1}-t}, X_{t_{m}-t} , \dots, X_{t_n-t})| X_0=\omega_t]\\
&=\mathbb{E}_{\omega_t}\Big[\sum_{i=m}^n  \hat Q_{0,t_i-t}^* \hptr_{t_i-t}^{-1} d_if |_{(\omega_{t_1} ,\dots, \omega_{t_{m-1}}, X_{t_{m}} , \dots, X_{t_n})}\Big].
\end{align*}
By the strong Markov properties,
\begin{align*}
&\mathbb{E}_{\omega_t}\Big[\sum_{i=m}^n  \hat Q_{0,t_i-t}^* \hptr_{t_i-t}^{-1} d_if|_{(\omega_{t_1} ,\dots, \omega_{t_{m-1}-t}, X_{t_{m}-t} , \dots, X_{t_n-t})}\Big]\\
&=\mathbb{E} \Big[\sum_{i=m}^n \hat Q_{t,t_i}^* \hptr_{t,t_i}^{-1} d_if|_{(\omega_{t_1} ,\dots, \omega_{t_{m-1}}, X_{t_{m}} , \dots, X_{t_n})}\,\big|\,X_t=\omega_t\Big].
\end{align*}
From this equality, we obtain
\begin{align*}
&  D_t \mathbb{E} [F\,|\,\mathscr{F}_t] =  \ptr_t^{-1} d_{\omega_t}g |_{(\omega_{t_1} , \dots, \omega_{t_{m-1}}, \omega_t)} \\
& =   \ptr_t^{-1} \mathbb{E}\Big[\sum_{i=m}^n \hat Q_{t,t_i}^* \hptr_{t,t_i}^{-1} d_if |_{(\omega_{t_1} ,\dots, \omega_{t_{m-1}}, X_{t_{m}} , \dots, X_{t_n})} \,\big|\, X_t=\omega_t\Big]\\
& = \mathbb{E}[ \tilde D_t F | \mathscr{F}_t ].
\end{align*}

For part (b), we first observe that $\E[F\,|\,\mathscr{F}_t]$ is a martingale according to the definition.
By martingale representation, we have
\begin{align*}
d \E[F\,|\, \mathscr{F}_t]&=\langle D_t\E[F\,|\, \mathscr{F}_t], dB_t\rangle = \langle \E[\tilde{D}_tF\,|\, \mathscr{F}_t], dB_t\rangle.
\end{align*}
Integrating from $0$ to $T$ gives
\begin{align}\label{F-formula}
F-\E[F]=\E[F\,|\,\mathscr{F}_T]-\E[F]=\int_0^T\langle \E[\tilde{D}_tF\,|\, \mathscr{F}_t], dB_t\rangle.
\end{align}

For part (c), we first take the formula about $F$ inside the term $\mathbb{E}\left[F \langle k, B^x \rangle_{\mathbb{H}} \right]$. By \eqref{F-formula},
\begin{align*}
\mathbb{E}\left[F \langle k, B^x \rangle_{\mathbb{H}} \right]&=
\mathbb{E}\left[\left(\E[F]+\int_0^T\langle \E[\tilde{D}_tF\,|\, \mathscr{F}_t], dB_t\rangle\right) \langle k, B^x \rangle_{\mathbb{H}} \right]\\
&=\mathbb{E}\left[\int_0^T\langle\E[\tilde{D}_tF\,|\, \mathscr{F}_t], \dot{k}_t \rangle dt  \right],
\end{align*}
giving the formula in (c). By letting $F(X_{[0,T]})=f(X_t)$ we have the equality in \eqref{IBPHatNabla}.
\end{proof}

\begin{proof}[Proof of Corollary \ref{cor-Dev-Int}]
For (a), we first observe from Lemma \ref{lemma:DampGradient}  that
\begin{align*}
\langle dP_tf, v\rangle&=\E\langle\hat{Q}_t^{*}\hptr_t^{-1} df|_{X_t},v \rangle=\E\langle \ptr_t^{-1}df|_{X_t}, U_t\hat{Q}_tv \rangle.
\end{align*}
Then by \eqref{hatQ-eq}, we have
\begin{align*}
\langle dP_tf, v\rangle= \E\left[ \left\langle \ptr_t^{-1} df , Q_t v + \int_0^t dA_s Q_s v\right\rangle \right], \quad v \in T_xM.
\end{align*}
The formula in (b) follows from Theorem \ref{IBPD}\,(b) by taking $F(X_{[0,T]})=f(X_t)$ directly.
\end{proof}

\subsection{Quasi-invariance}
We want to link $\langle \tilde DF, k\rangle_{\mathbb{H}}$ to the directional derivative induced by some quasi-invariant flow. We use techniques from \cite[Chapter~4.2]{Wbook2}. For $k \in \mathbb{H}^{T}_x$
and $s \in (-\ve, \ve)$, let $X^{s} = X^{x,s}$ solve the SDE
\begin{align} \label{X-SDE}
d X_{t}^{s}=\ptr_{t}^{s}\circ \ d B_t
- s \ptr_{t}^{s} d k_t, \qquad X_0^{s}=x,
\end{align}
where  $\ptr^{s}$ is the parallel transport along $X^{s}$ with respect to $\nabla$. This flow is quasi-invariant, i.e.,
the distribution of $X_{[0,T]}^{s}$ is absolutely continuous with respect to that of $X_{[0,T]}= X^x_{[0,T]}$. Let
\begin{align*}
R^{s}= \exp{\left(s \langle k,  B \rangle_{\mathbb{H}}-\frac{s^2}{2} \langle k , k \rangle_{\mathbb{H}}\right)} =\exp{\left(s \int_0^T \langle \dot k, B\rangle_{g}-\frac{s^2}{2}\int_0^T | \dot k_t|^2_{g}\ d t\right)}.
\end{align*}
If $\mathbb{P}$ denotes the Wiener measure on the path space of $H_x$, $d = \rank H_x$, then by the Girsanov theorem,
$$B_t^{s}:=B_t - s k_t$$
is a $d$-dimensional Brownian motion in $H_x$ under the probability measure $\mathbb{P}^s = R^{s}\mathbb{P} = (\xi_{s,k})_* \mathbb{P}$ with
$$\xi_{s,k}(u) = u + sk, \qquad u \in W_0(H_x).$$

\begin{proposition}\label{prop:quasi-invariant}
For any $x\in M$ and $F\in \FC$,
\begin{align*}
 \E_x[\langle \tilde D F, k \rangle_{\mathbb{H}}]=
\lim_{s \to 0}\mathbb{E}\frac{F(X_{[0,T]}^{s})-F(X_{[0,T]})}{s}
\end{align*}
holds for all $k\in \mathbb{H}^T$.
\end{proposition}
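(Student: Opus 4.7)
The plan is to differentiate the Girsanov identity furnished by the quasi-invariance in $s$ at $s=0$ and to match the resulting expression with the integration by parts formula of Theorem~\ref{Dev-Intr-formulas}(c); this mirrors the classical route from quasi-invariance to integration by parts, cf.~\cite[Chapter~4.2]{Wbook2}.

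Rewriting \eqref{X-SDE} as $dX^{s}_t = \ptr_t^{s}\circ dB^{s}_t$ with $B^{s}_t = B_t - sk_t$, and using that $B^{s}$ is a standard Brownian motion under $\mathbb{P}^{s}= R^{s}\mathbb{P}$, one sees that the law of $X^{s}_{[0,T]}$ under $\mathbb{P}^{s}$ coincides with the law of $X_{[0,T]}$ under $\mathbb{P}$ (since $X^{s}$ is built from $B^{s}$ in exactly the same way that $X$ is built from $B$). This yields the key identity
$$
\mathbb{E}\bigl[R^{s}\,F(X^{s}_{[0,T]})\bigr] = \mathbb{E}^{\mathbb{P}^{s}}\bigl[F(X^{s}_{[0,T]})\bigr] = \mathbb{E}\bigl[F(X_{[0,T]})\bigr]
$$
for every $F\in\FC$ and every sufficiently small $s$. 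Rearranging this identity and dividing by $s$ gives
$$
\mathbb{E}\!\left[\frac{F(X^{s}_{[0,T]}) - F(X_{[0,T]})}{s}\right] = -\,\mathbb{E}\!\left[\frac{R^{s}-1}{s}\,F(X^{s}_{[0,T]})\right].
$$
Passing to the limit $s\to 0$ using $(R^{s}-1)/s \to \langle k, B\rangle_{\mathbb{H}}$ and $F(X^{s}_{[0,T]}) \to F(X_{[0,T]})$, and then invoking Theorem~\ref{Dev-Intr-formulas}(c), which identifies $\mathbb{E}[F\,\langle k, B\rangle_{\mathbb{H}}]$ with $\mathbb{E}[\langle \tilde D F, k\rangle_{\mathbb{H}}]$, delivers the proposition.

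The principal technical hurdle is the justification of the interchange of limit and expectation. Two $L^{p}$-convergences are required: first, $(R^{s}-1)/s \to \langle k, B\rangle_{\mathbb{H}}$ in $L^{p}(\mathbb{P})$ for every $p<\infty$, which is a routine exponential-martingale estimate with deterministic drift $sk \in \mathbb{H}_x^T$; second, $F(X^{s}_{[0,T]}) \to F(X_{[0,T]})$ in $L^{p}(\mathbb{P})$, which follows from continuous dependence of the SDE \eqref{X-SDE} on the parameter $s$ combined with the boundedness and smoothness of the cylindrical function $F$. A Cauchy--Schwarz argument then closes the interchange, completing the proof.
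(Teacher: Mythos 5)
Your overall strategy---differentiate a Girsanov identity at $s=0$ and match the result with Theorem~\ref{Dev-Intr-formulas}(c)---is the same as the paper's, but the specific identity you extract is the transpose of the one the paper uses, and the two are not interchangeable: they differ by the sign of the final answer. You place the density on $F(X^s_{[0,T]})$, asserting that the law of $X^s_{[0,T]}$ under $\mathbb{P}^s$ equals the law of $X_{[0,T]}$ under $\mathbb{P}$, hence $\E[R^sF(X^s_{[0,T]})]=\E[F(X_{[0,T]})]$. Carrying your own rearrangement through the limit gives
\begin{align*}
\lim_{s\to0}\E\Big[\frac{F(X^s_{[0,T]})-F(X_{[0,T]})}{s}\Big]
=-\lim_{s\to0}\E\Big[\frac{R^s-1}{s}F(X^s_{[0,T]})\Big]
=-\E\big[F(X_{[0,T]})\langle k,B\rangle_{\mathbb{H}}\big],
\end{align*}
which by Theorem~\ref{Dev-Intr-formulas}(c) equals $-\E_x[\langle\tilde DF,k\rangle_{\mathbb{H}}]$, i.e.\ the \emph{negative} of the right-hand side of Proposition~\ref{prop:quasi-invariant}. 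So the last sentence of your proof does not follow from the preceding display; there is an unresolved sign. The paper instead uses the identity $\E[F(X^s_{[0,T]})]=\E[R^sF(X_{[0,T]})]$ (the law of $X$ under $\mathbb{P}^s$ agrees with that of $X^s$ under $\mathbb{P}$), which puts the density on $F(X_{[0,T]})$ and yields $+\E[F\langle k,B\rangle_{\mathbb{H}}]$ directly. To first order in $s$ the two identities are incompatible, and which one holds is decided by the sign of the drift: yours corresponds to the perturbation $u\mapsto u-sk$ of the driving noise (which is what \eqref{X-SDE} literally says), the paper's to $u\mapsto u+sk$ (which is what $\mathbb{P}^s=(\xi_{s,k})_*\mathbb{P}$ indicates is intended). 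You must either adopt the paper's identity, after reconciling it with the sign of the drift in \eqref{X-SDE}, or accept that your computation proves the statement with a minus sign; as written, the proof has a genuine gap at the final step.

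A smaller, comparative point: if one does use the paper's form of the identity, the second $L^p$-convergence you flag as a technical hurdle ($F(X^s_{[0,T]})\to F(X_{[0,T]})$ via continuous dependence of the SDE on $s$) is not needed at all, since $F(X^s_{[0,T]})$ no longer appears after the first step; only the convergence $(R^s-1)/s\to\langle k,B\rangle_{\mathbb{H}}$ in $L^2$ is required. That is a concrete advantage of placing the Radon--Nikodym density on $F(X_{[0,T]})$ rather than on $F(X^s_{[0,T]})$.
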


\begin{proof}
Let $B_t^{s}=B_t-s k_t$, which is the $d$-dimensional Brownian motion under $\mathbb{P}^s$.
By the weak uniqueness of \eqref{X-SDE}, we conclude that the distribution of
$X$ under $\mathbb{P}^s$  is consistent with that of $X^{s}$
under $\mathbb{P}$. In particular,
$$\mathbb{E} [F(X_{[0,T]}^{s})]=\mathbb{E} \left[R^{s} F(X_{[0,T]})\right].$$
Thus, we have
\begin{align*}
& \lim_{s \to 0}\mathbb{E}\frac{F(X_{[0,T]}^{s})-F(X_{[0,T]})}{s}=\lim_{s\rightarrow0}\mathbb{E}\left[F(X_{[0,T]})\frac{R^{s}-1}{s}\right]\\
&=\mathbb{E}\left[F(X_{[0,T]})\int_0^T\langle \dot k_t, d B_t \rangle_{g}\right]= \E_x[\langle \tilde D F, k \rangle_{\mathbb{H}}].
\end{align*}
\end{proof}

\subsection{Comments for the non-compact case} \label{sec:Noncompact}
In order to keep the exposition simpler, we assumed throughout this section that we are working over a compact manifold. This has the advantage that we can be assured that processes such as $X_t^x$, $A_t$ and $Q_t$ have infinite lifetime, all tensors are bounded and all local martingales that appear in our proofs are indeed true martingales. If one can find alternative ways to show that the same properties hold, our results hold without the compactness assumption. One way to ensure this on a non-compact manifold, is to verify that the following assumptions hold.
\begin{enumerate}[\rm (A)]
\item Assume that $X_t^x$ has infinite lifetime, i.e. assume that $P_t1 =1$.
\item We must be able to pick a taming Riemannian metric $\bar{g}$ such that $\mathbf{T}$ and $\Ric$ are bounded and such that $\sup_{0 \leq t \leq T}|\ptr_t|_{\bar{g} \otimes \bar{g}^*}$ is finite for every $T$. We note that since $\hptr_t = \ptr_t(\id+A_t) $, such assumptions are sufficient to bound parallel transport with respect to $\hat \nabla$ as well.
\item Our cylindrical functions will now be defined as $\FCc$ consisting of functions $F(\omega) = f(\omega_{t_1}, \dots, \omega_{t_n})$ where $f \in C^\infty_0(M^n)$ is of compact support. In order to differentiate expectations of such functions and in order that they remain bounded, we need to assume that $\sup_{t \in [0,T]} | d P_{t}f |_{\bar{g}^*}$ is bounded for every finite $T > 0$ and every compactly supported function $f \in C^\infty_0(M)$.
\end{enumerate}
For sufficient conditions for these assumptions to be satisfied on non-compact manifolds, more specifically on complete sub-Riemannian manifolds coming from totally geodesic Riemannian foliations, see \cite{GT16}.

For alternate approaches of dealing with path space analysis on non-compact spaces in the Riemannian case, we also refer to \cite{ChWu14}. For the rest of the paper, we will assume that properties (A) to (C) are satisfied or some other conditions are satisfied in order to ensure that the results of this section hold.

\section{Bounded curvature and functional inequalities on path space} \label{sec:BoundCurv}

\subsection{Inequalities equivalent to bounded curvature}
Inspired by Naber's work \cite{Naber}, we have the following characterization formulae for the boundedness of $\Ric_H$.
Let $V$ be a metric preserving complement with $\nabla = \nabla^{g,V}$ the corresponding connection. Recall that $\Ric_H:=\Ric|_H$ where $\Ric$ is defined as in \eqref{Ric}. We state the main result of the paper with proof given in the next section.

\begin{theorem}[Characterization of bounded horizontal Ricci curvature by functional inequalities]\label{main-th2} 
Let $K$ be some fixed non-negative constant. Consider the following bound for the horizontal Ricci curvature $\Ric_H$
\begin{align}\label{cur-cond}
 -K \leq \Ric_H \leq K.
  \end{align}
{\rm I.} The following functional inequalities for functions on path space are equivalent to curvature bound \eqref{cur-cond}:
\begin{enumerate}[\rm (i)]
  \item for any $F\in \mathscr{F}C_0^{\infty}$,
   \begin{align*}
   |D_0\mathbb{E}_x[F]|_{g} \leq \mathbb{E}_x\Big[|D_0F|_{g}+ \frac{K}{2} \int_0^T\e^{\frac{K}{2}s}|D_sF|_{g}\, d s\Big];
   \end{align*}
  \item for any $F\in \mathscr{F}C_0^{\infty}$,
   \begin{align*}
   |D_0\mathbb{E}_x[F]|^2_{g} \leq \e^{\frac{K}{2}T}\mathbb{E}_x\Big[|D_0F|^2_{g}+ \frac{K}{2}\int_0^T\e^{\frac{K}{2}s}|D_sF|^2_{g}\, d s\Big];
   \end{align*}

     \item \emph{(Log-Sobolev inequality)} for any $F\in \mathscr{F}C_{0}^{\infty}$ and
    $t>0$ in $[0,T]$,
    \begin{align*}
      \quad&\E_x\Big[\E_x[F^2|\mathscr{F}_{t}]\log \E_x[F^2|\mathscr{F}_{t}]\Big]
             -\E_x[F^2]\log \E_x[F^2]\\
           &\leq 2\int_{0}^{t}\e^{\frac{K}{2}(T-r)}\left(\E_x|{D}_rF|^2_{g}+\frac{K}{2}\int_r^T\e^{\frac{K}{2}(s-r)}\E_x|{D}_sF|^2_{g}\ d s\right)d r;
    \end{align*}
  \item \emph{(Poincar\'e inequality)} for any $F\in \mathscr{F}C_{0}^{\infty}$ and
    $t>0$ in $[0,T]$,
    \begin{align*}
      \quad&\E_x\Big[\E_x[F|\mathscr{F}_{t}]^2\Big]-\E_x[F]^2\\
           &\leq\int_{0}^{t}\e^{\frac{K}{2}(T-r)}\left(\E_x|{D}_rF|^2_{g}+\frac{K}{2}\int_r^T\e^{\frac{K}{2}(s-r)}\E_x|{D}_sF|^2_{g}\ d s\right) d r.
    \end{align*}
 \end{enumerate}
{\rm II.} The following functional inequalities on the manifold $M$ are equivalent to the curvature bound \eqref{cur-cond}:
\begin{enumerate}[\rm (i)] 
\setcounter{enumi}{4}
\item  for $f\in C_0^{\infty}(M)$,
  \begin{align*}
  &|d P_tf(x)|_{g^*}^2-\e^{\frac{K}{2}t}\E_x\left[\big|(\id+A_t)^*\ptr_t^{-1}df(X_t)\big|_{g^*}^2\right]\\
    &\quad \leq \E_x\left[\frac{K}{2}\int_0^t\e^{\frac{K}{2}(t+s)}|(\id+A_t-A_s)^*\ptr^{-1}_tdf(X_t)|_{g^*}^2\, ds\right];\\
    \intertext{and}
  &|2d f-d P_tf|_{g^*}^2(x)-\e^{\frac{K}{2}t}\E_x\left[\big|2df(x)-(\id+A_t)^*\ptr_t^{-1}df(X_t)\big|_{g^*}^2\right]\\
  &\quad \leq \E_x\left[\frac{K}{2}\int_0^t\e^{\frac{K}{2}(t+s)}|(\id+A_t-A_s)^*\ptr^{-1}_tdf(X_t)|_{g^*}^2\, ds\right].
  \end{align*}
\end{enumerate}
\end{theorem}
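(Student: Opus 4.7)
My plan is to follow Naber's strategy \cite{Naber} adapted to the sub-Riemannian setting, using as the three main tools the derivative formula $D_t\E_x[F|\mathscr{F}_t]=\E_x[\tilde D_tF|\mathscr{F}_t]$ and the Clark--Oc\^one formula of Theorem~\ref{Dev-Intr-formulas}, together with the conversion formula of Theorem~\ref{IBPtildeD}(b),
$$\tilde D_t F = D_t F - \frac{1}{2}\int_t^T (\Ric_{\ptr_s} Q_s Q_t^{-1})^*\, D_s F\, ds.$$
Under the hypothesis $-K\leq\Ric_H\leq K$, Lemma~\ref{P-Ric} reduces $\Ric_{\ptr_s}$ to an action on $H_x$, and Gronwall applied to \eqref{Q-equation} yields the operator bound $\|Q_sQ_t^{-1}\|_g\leq e^{K(s-t)/2}$; this is the source of the exponential weights in Theorem~\ref{main-th2}. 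Combining the conversion formula with this bound, and then applying Cauchy--Schwarz with respect to the probability measure of total mass $e^{K(T-s)/2}$ built from $\delta_s+\tfrac{K}{2}e^{K(r-s)/2}\mathbf{1}_{[s,T]}(r)\,dr$, produces the pointwise estimate
$$|\tilde D_sF|_g^2\leq e^{K(T-s)/2}\Big(|D_sF|_g^2+\tfrac{K}{2}\int_s^T e^{K(r-s)/2}|D_rF|_g^2\,dr\Big),$$
which will be the single building block from which all four inequalities in Part~I follow.

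For (i) and (ii), take $t=0$ in the derivative formula and apply the triangle inequality (for (i)), respectively the above pointwise bound combined with Jensen (for (ii)), to $|\E_x[\tilde D_0F]|$. For the log-Sobolev inequality (iii), set $N_t=\E_x[F^2|\mathscr{F}_t]$; Clark--Oc\^one applied to $F^2$ gives $dN_t=2\langle\E_x[F\tilde D_tF|\mathscr{F}_t],dB_t\rangle$, and It\^o's formula for $N_t\log N_t$ combined with the conditional Cauchy--Schwarz inequality $|\E_x[F\tilde D_rF|\mathscr{F}_r]|^2\leq N_r\,\E_x[|\tilde D_rF|^2|\mathscr{F}_r]$ yields
$$\E_x[N_t\log N_t]-\E_x[F^2]\log\E_x[F^2]\leq 2\int_0^t\E_x|\tilde D_rF|_g^2\,dr,$$
and substituting the pointwise bound on $|\tilde D_rF|^2$ produces (iii). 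The Poincar\'e inequality (iv) is identical, with $M_t^2=\E_x[F|\mathscr{F}_t]^2$ replacing $N_t\log N_t$ (the $1/N_s$-factor and hence the $2$ both drop). Part~II is then a specialization of (i), (ii) to $F(\omega)=f(\omega_t)$, for which $D_0F=\sharp(\id+A_t)^*\ptr_t^{-1}df|_{X_t}$, $D_sF=\sharp(\id+A_t-A_s)^*\ptr_t^{-1}df|_{X_t}$, and $D_0\E_x[F]=dP_tf|_x$; the second inequality in (v) follows from the same computation applied to the shifted martingale $2f(x)-M_s$ with $M_s=P_{t-s}f(X_s)$, using the Clark--Oc\^one representation of $f(X_t)-P_tf(x)$ in It\^o form. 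For the converse, I would test each inequality against $F(\omega)=f(\omega_t)$ and let $t\to 0$: the $O(1)$ terms match trivially, and the $O(t)$ correction isolates $\langle\Ric_Hv,v\rangle_g$ on $H_x$ via the sub-Riemannian Weitzenb\"ock-type identity satisfied by $\nabla^{g,V}$, forcing $-K\leq\Ric_H\leq K$.

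The main obstacle is bookkeeping the torsion-induced operator $A_t=\int_0^t\mathbf{T}_{\ptr_s}(\circ dB_s,\,\cdot\,)$, which has no Riemannian analogue. Because $A_t$ sends $H$ into $V$ and annihilates $V$, the operator $U_t=\id+A_t$ intertwining $\ptr_t$ with $\hptr_t$ is nilpotent but not an isometry, so quantities like $\hat Q_t^*\hptr_t^{-1}df|_{X_t}$ develop vertical components that must be controlled by Lemma~\ref{P-Ric} (ensuring $\Ric\,\pr_V=0$ kills the extra terms) together with the metric-preserving hypothesis on $V$ that makes $\nabla^{g,V}$ compatible with $(H,g)$. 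These $A_t$-corrections persist visibly in Part~II (v), which is why those inequalities differ from their Riemannian counterparts in \cite{Naber}; tracking them carefully in the converse direction, where one must still recover the \emph{horizontal} Ricci operator from asymptotic $O(t)$-behaviour despite the $A_t$-generated vertical noise, is the most delicate step of the argument.
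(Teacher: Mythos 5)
Your proposal is correct and follows essentially the same route as the paper: the forward implications rest on the derivative/Clark--Oc\^one formulas, the conversion formula $\tilde D_tF=D_tF-\tfrac12\int_t^T(\Ric_{\ptr_s}Q_sQ_t^{-1})^*D_sF\,ds$ with the Gronwall bound on $Q_sQ_t^{-1}$, and the weighted Cauchy--Schwarz estimate for $|\tilde D_sF|^2$ (the paper's inequality \eqref{DQ}), while the converse uses one-point cylinder functions and the small-time Weitzenb\"ock asymptotics of Lemma~\ref{prop1}. The only organizational difference is that the paper closes the equivalence through the chain (iii)$\Rightarrow$(iv)$\Rightarrow$(ii)$\Rightarrow$(v)$\Rightarrow$\eqref{cur-cond} (linearizing the log-Sobolev inequality via $F^2=1+\ve G$ and dividing the Poincar\'e inequality by the conditioning time) rather than extracting the curvature from each inequality separately, but this is a matter of bookkeeping rather than of method.
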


\begin{remark}\label{non-sym-bound}
Theorem \ref{main-th2} describes equivalent statements for a symmetric bound of $\Ric_H$.
For non-symmetric bounds, i.e.~if  $K_1 \leq \Ric_H \leq K_2$ for some constants $K_1 \leq K_2$, we can also give corresponding equivalent functional inequalities as in \cite{CT16} by modifying the gradient operator on the path space to:
\begin{align*}
\bar{D}_t F := \sum_{i=1}^n  1_{t \leq t_i}\e^{-\frac{K_1+K_2}{4}(t_i-t)}  \sharp (\id + A_{t_i} - A_t )^* \ptr_{t_i}^{-1}   d_iF.
\end{align*}
The functional inequalities will then be written in terms of $\bar{D}F$. We refer to \cite[Proposition 2.2]{CT16} for a more detailed discussion.
\end{remark}

\subsection{Proof of Theorem~\ref{main-th2}} We will show equivalence of the properties in Theorem~\ref{main-th2} by proving the relations
$$\xymatrix{
&  \text{\eqref{cur-cond}} \ar@{=>}[d] \ar@{=>}[rd] \ar@{=>}[r] & {\rm (iii)} \ar@{=>}[d] \\
{\rm (v)} \ar@{=>}[ru]  & {\rm (i)} \ar@{=>}[d] & {\rm  (iv)} \ar@{=>}[ld]  \\
 & {\rm (ii)} \ar@{=>}[lu]
}$$
We divide the proof into two parts.
\begin{proof}[Proof of Theorem \ref{main-th2}, Part I] In this part, we prove the implications ``\eqref{cur-cond} $\Rightarrow$ (i) $\Rightarrow$ (ii) $\Rightarrow$ (v)'', ``\eqref{cur-cond} $\Rightarrow$ (iii)'' and ``\eqref{cur-cond} $\Rightarrow$ (iv)''. \\
\paragraph{``\eqref{cur-cond}$\Rightarrow$ (i)"}
By Theorem \ref{IBPD} (a), we have
\begin{align*}
d_x\E_x[F]=\E_x[D_0F]-\frac{1}{2}\int_0^T\E[(\Ric_{\ptr_s}Q_s)^*D_sF]\,ds.
\end{align*}
Then using the condition \eqref{cur-cond}, we can prove (i) by controlling $Q_t$ and $\Ric_H$:
\begin{align*}
|d_x\E_x[F]| \leq|\E_x[D_0F]|+\frac{K}{2}\int_0^T\e^{\frac{1}{2}Ks}\E_x[|D_sF|]\,ds.
\end{align*}

\paragraph{``(i) $\Rightarrow$ (ii)"}
 It is easily observed that
\begin{align}\label{DQ}
\E_x\left[|\tilde D_tF |_{g}^2\Big|\mathscr{F}_t\right]&=\E_x\left[\Big(|D_tF|_{g}+\frac{K}{2}\int_t^T \e^{\frac{K}{2}(s-t)}|D_sF|_{g}ds\Big)^2\Big|\mathscr{F}_t\right]\notag\\
&\leq \e^{\frac{K}{2}(T-t)}\E_x\left[\Big(|D_tF|_{g}^2+ \frac{K}{2}\int_t^T \e^{\frac{K}{2}(s-t)}|D_sF|_{g}^2\ d s\Big)\Big|\mathscr{F}_t\right],
\end{align}
which implies (ii) with $t=0$.

\

\paragraph{``(ii)$\Rightarrow$ (v) "}  If $F(\omega)=f(\omega_t)$, then
$$D_0F(X_t)=\sharp(\id+A_t)^*\ptr_t^{-1}df(X_t), \quad D_sF(X_t)=\sharp (\id+A_t-A_s)^*\ptr_t^{-1}df(X_t).$$
 We obtain from (ii) that
\begin{align}\label{gra-eq1}
|dP_tf|_{g^*}^2 & \leq \e^{Kt}\E_x\left[|(\id+A_t)^*\ptr_t^{-1}df(X_t)|_{g^*}^2 \right] \\ \nonumber
& \qquad  +\frac{K}{2} \E\left[ \int_0^t\e^{\frac{K}{2}s}|(\id+A_t-A_s)^*\ptr_t^{-1}df(X_t)|_{g^*}^2ds\right].
\end{align}
Moreover, if $F(\omega)=2f(x)-f(\omega_t)$, then
\begin{align}\label{gra-eq2}
&|2d f-d P_tf|_{g^*}^2(x)-\e^{\frac{K}{2}t}\E_x\left[\big|2df(x)-(\id+A_t)^*\ptr_t^{-1}df(X_t)\big|_{g^*}^2\right]\notag\\
  &\quad \leq \E_x\left[\frac{K}{2}\int_0^t\e^{\frac{K}{2}(t+s)}|(\id+A_t-A_s)^*\ptr^{-1}_tdf(X_t)|_{g^*}^2\, ds\right].
\end{align}

\paragraph{``\eqref{cur-cond}$\Rightarrow$(iii)(iv)"}

We now prove
(iii) and (iv) by using the estimate \eqref{DQ} above and the It\^{o} formula,
\begin{align}\label{Log-S-I}
d( \mathbb{E}_x[F^2|\mathscr{F}_t]\log \mathbb{E}_x[F^2|\mathscr{F}_t])&=d M_t+ \frac{\big|\mathbb{E}_x[\tilde D_t F^2|\mathscr{F}_t] \big|_{g}^2}{2\mathbb{E}_x[F^2|\mathscr{F}_t]} dt\notag\\
&\leq d M_t+ 2 \E_x\left[ \big| \tilde D_tF\big|^2_{g} |\mathscr{F}_t\right]dt,
\end{align}
where $M_t$ is a local martingale such that
\begin{align*}
dM_t=(1+\log \mathbb{E}_x[F^2 |\mathscr{F}_t]) \langle \mathbb{E}(\tilde D_t F^2|\mathscr{F}_t), d B_t \rangle.
\end{align*}
 Integrating from $0$ to $t$ and taking expectation of both sides, we prove the inequality (iii).
Similarly, we can prove (iv) by taking into consideration of the process $\E_x[F|\mathscr{F}_t]^2$ and using the following It\^{o} formula:
\begin{align}\label{P-I}
d \E_x[F|\mathscr{F}_t]^2=d \tilde{M}_t+\E_x \left[\big| \tilde D_tF\big|^2_{g} |\mathscr{F}_t \right] dt,
\end{align}
where $\tilde{M}_t$ is a local martingale such that
\begin{align*}
d\tilde{M}_t=2 \mathbb{E}_x[F |\mathscr{F}_t] \langle  \mathbb{E}(\tilde D_t F|\mathscr{F}_t), d B_t \rangle.
\end{align*}
 Integrating from $0$ to $t$ and taking expectation of both sides, we prove the inequality (iv).
\end{proof}

To give the second part of the proof, we first need to include the following lemmas. For the Riemannian manifold case, the corresponding results can be found for instance in \cite{Wbook2,CT16}.
\begin{lemma}\label{B-W-Formula}
\emph{(Bochner-Weitzenb\"{o}ck formula)}
Let $L= \tr_H \nabla_{\times,\times}^2$ be the connection sub-Laplacian on tensors. For any $f\in C^{\infty}(M)$, we then have
\begin{align*}
L df(Z) - dLf(Z)& =  - 2 \tr_H \nabla_\times df(\mathbf{T}(\times, Z) ) + df(\Ric(Z) + \delta_H \mathbf{T}(Z)) .
\end{align*}
In particular,
\begin{align*}
\frac{1}{2} L|df|^2_{g^*} &= \langle dLf, df \rangle_{g^*}+ \langle (\Ric+ \delta_H \mathbf{T})^*(df), df \rangle_{g^*} \\
& \qquad+|\nabla df|_{g^* \otimes g^*}^2 -2 \tr_H \nabla_\times df(\mathbf{T}(\times, \sharp df)).
\end{align*}
\end{lemma}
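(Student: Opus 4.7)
The plan is to prove the first identity by the standard Bochner commutator trick, adapted to the present setting where $\nabla = \nabla^{g,V}$ has nontrivial torsion. I fix $x \in M$ and pick a local $g$-orthonormal frame $e_1,\ldots,e_r$ of $H$ together with an extension of an arbitrary test vector $Z \in T_xM$ to a vector field, all arranged to be $\nabla$-parallel at $x$. Since $H$ is parallel under $\nabla$ (by Proposition~\ref{pro:NablaGV}), covariant differentiation commutes with $\tr_H$, so the identity reduces at $x$ to the comparison of $\sum_i (\nabla^3 f)(e_i, e_i, Z)$ with $\sum_i (\nabla^3 f)(Z, e_i, e_i)$, where I write $(\nabla^3 f)(X,Y,Z) := (\nabla^2_{X,Y} df)(Z)$.

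The bridge between these two sums is the intermediate term $\sum_i (\nabla^3 f)(e_i, Z, e_i)$, and two commutator identities are applied in turn. First, differentiating the pointwise relation $(\nabla df)(Y,Z) - (\nabla df)(Z,Y) = -df(\mathbf{T}(Y,Z))$ along $X$ produces the swap of the last two indices,
\[
(\nabla^3 f)(X,Y,Z) - (\nabla^3 f)(X,Z,Y) = -(\nabla_X df)(\mathbf{T}(Y,Z)) - df\bigl((\nabla_X \mathbf{T})(Y,Z)\bigr);
\]
tracing over $H$ contributes $-\tr_H(\nabla_\times df)(\mathbf{T}(\times,Z)) + df(\delta_H \mathbf{T}(Z))$. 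Second, the Ricci identity for the 1-form $df$ with a torsion correction gives the swap of the first two indices,
\[
(\nabla^3 f)(X,Y,Z) - (\nabla^3 f)(Y,X,Z) = -df(\mathbf{R}(X,Y)Z) - (\nabla_{\mathbf{T}(X,Y)} df)(Z);
\]
tracing over $H$ contributes $df(\Ric(Z))$ by the definition of $\Ric$, together with a second torsion-trace term.

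The main obstacle is reconciling this second torsion-trace $\sum_i (\nabla_{\mathbf{T}(e_i,Z)} df)(e_i)$ with the first torsion-trace $\sum_i (\nabla_{e_i} df)(\mathbf{T}(e_i,Z))$, in order to obtain the factor of $2$ in the final formula. Here the hypothesis that $V$ be metric preserving is essential: Proposition~\ref{pro:NablaGV} guarantees $\mathbf{T}(H,V) = 0$ and $\mathbf{T}(H,H) \subseteq V$, so in particular $\mathbf{T}(e_i, \mathbf{T}(e_i,Z)) = 0$. Combining this vanishing with the skew-symmetry identity $(\nabla df)(Y,W) - (\nabla df)(W,Y) = -df(\mathbf{T}(Y,W))$ collapses $(\nabla_{\mathbf{T}(e_i,Z)} df)(e_i)$ to $(\nabla_{e_i} df)(\mathbf{T}(e_i,Z))$, which is what produces the final $-2\tr_H(\nabla_\times df)(\mathbf{T}(\times,Z))$.

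For the second identity, on $\tfrac{1}{2} L|df|^2_{g^*}$, I would first establish the auxiliary formula
\[
\tfrac{1}{2} L|df|^2_{g^*} = \langle Ldf, df \rangle_{g^*} + |\nabla df|^2_{g^* \otimes g^*}
\]
by a short calculation at $x$ in the parallel horizontal frame, using compatibility of $\nabla$ with $(H,g)$ (again equivalent to $V$ being metric preserving) through the relation $\nabla_X \sharp df = \sharp \nabla_X df$, which is verified by pairing against horizontal vector fields. Inserting the first part of the lemma into $\langle Ldf, df\rangle_{g^*}$ and rewriting $\langle df(\Ric(\cdot) + \delta_H \mathbf{T}(\cdot)), df\rangle_{g^*}$ as $\langle (\Ric + \delta_H \mathbf{T})^* df, df\rangle_{g^*}$ then yields the stated Weitzenb\"ock-type formula; this last step is routine.
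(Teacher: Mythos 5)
Your proof is correct and follows essentially the same commutator computation as the paper's: the paper proves the general Weitzenb\"ock formula (Lemma~\ref{lemma:Weitz} in the Appendix) by exactly this sequence of index swaps with torsion and curvature corrections, and then obtains the lemma by specializing to a metric preserving complement. One small misattribution worth fixing: the torsion identities $\mathbf{T}(H,H)\subseteq V$ and $\mathbf{T}(H,V)=0$ that kill the term $df(\mathbf{T}(e_i,\mathbf{T}(e_i,Z)))$ hold for $\nabla^{g,V}$ with \emph{any} complement (Proposition~\ref{pro:NablaGV}\,(c),(d)), so they are not where metric preservation enters; that hypothesis is instead what allows you to choose the $g$-orthonormal horizontal frame to be $\nabla$-parallel at $x$ in \emph{all} directions (otherwise the extra $q_Z$-terms of the general formula \eqref{GenWeitzNabla} appear) and what justifies the Leibniz computation for $\tfrac12 L|df|^2_{g^*}$ in the second identity.
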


\begin{proof}
The result follows from Lemma~\ref{lemma:Weitz}, Appendix, for the case of a metric preserving complement and from the property of the torsion for our choice of connection. \end{proof}

\begin{lemma}\label{prop1}
For $x\in M$, let $f\in C_0^{\infty}(M)$ be such that $\nabla df|_x = 0$ and $df(V_x) =0$.
Then the following limits hold:

  \begin{enumerate}[\rm (a)]
  \item
   $\displaystyle\frac{1}{2} \langle \sharp df, \Ric \sharp df\rangle_{g}(x)  =\lim_{t\downarrow 0}\frac{\big<d f, \E_x[(\id+A_t)^* \ptr_{t}^{-1} d f|_{X_t
  }]\big>_{g^*}(x)-\left<d f, d P_t f\right>_{g^*}(x)}{t};$
  \item 
  $\displaystyle\lim_{t\downarrow 0} \frac{1}{t^2} \mathbb{E}\left[ |A_t^*\ptr_t^{-1} df|_{g^*}^2 \right] =0;$
  \item 
    $\displaystyle\frac{1}{2} \langle \sharp df,  \Ric \sharp df\rangle_{g}(x)=\lim_{t\downarrow 0}\frac{\E_x\left[\big|(\id+A_t)^*\ptr_t^{-1}df|_{X_t}\big|_{g^*}^2\right]-|d P_{t}f|^2_{g^*}(x)}{t}.
    $
  \end{enumerate}
\end{lemma}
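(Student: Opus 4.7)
The plan is to handle all three limits as short-time expansions at the base point $x$, exploiting the two hypotheses to kill most of the terms that would otherwise be of order $t$. The common technical input is: by It\^o's formula on the cotensor $\ptr_t^{-1}df|_{X_t}$,
\begin{equation*}
\ptr_t^{-1}df|_{X_t} = df|_x + \eta_t, \quad \eta_t = \int_0^t \ptr_s^{-1}\nabla_{\ptr_s dB_s}df + \tfrac12\int_0^t \ptr_s^{-1} Ldf\,ds,
\end{equation*}
where the assumption $\nabla df|_x=0$ makes the martingale integrand of order $d(x,X_s)=O(\sqrt s)$, so BDG gives $\|\eta_t\|_{L^2}=O(t)$ (rather than the generic $O(\sqrt t)$), and similarly $\|\eta_t\|_{L^4}=O(t)$. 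Concurrently $A_t=O(\sqrt t)$ in $L^2$ with $A_t(H_x)\subseteq V_x$ and $A_t|_{V_x}=0$, and $Q_t=\id-\tfrac{t}{2}\Ric|_x+O(t^2)$ in $L^2$. Parallel transport by $\nabla$ is a $g^*$-isometry because $\nabla$ is horizontally compatible (the adjoint of the $g$-isometry on $H$).

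For (b), fix $v\in H_x$; since $A_tv\in V_x$ and $df|_x(V_x)=0$,
\begin{equation*}
(A_t^*\ptr_t^{-1}df|_{X_t})(v) = (\ptr_t^{-1}df|_{X_t})(A_tv) = df|_x(A_tv)+\eta_t(A_tv) = \eta_t(A_tv).
\end{equation*}
Summing over an orthonormal basis of $H_x$ and applying Cauchy--Schwarz in $L^4\times L^4$ with $\|A_tv\|_{L^4}^2=O(t)$ and $\|\eta_t\|_{L^4}^2=O(t^2)$ gives $\E[|A_t^*\ptr_t^{-1}df|_{g^*}^2]=O(t^3)$, which divided by $t^2$ tends to $0$. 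For (a), I would substitute $v=\sharp df|_x$ in Corollary~\ref{cor-Dev-Int}(a) to rewrite the difference as
\begin{equation*}
\E\bigl[\langle\ptr_t^{-1}df|_{X_t},Z_t\rangle\bigr], \quad Z_t=(\id-Q_t)\sharp df+\int_0^t dA_s(\id-Q_s)\sharp df.
\end{equation*}
The integral term belongs to $V_x$, so it is annihilated when paired with $df|_x$; the remaining piece in $H_x$ satisfies $\E[df|_x((\id-Q_t)\sharp df)] = \tfrac{t}{2}\langle\sharp df,\Ric\sharp df\rangle_g|_x+O(t^2)$. The $\eta_t$ contribution to the pairing is $O(t^2)$ by Cauchy--Schwarz, which yields the claimed limit after dividing by $t$.

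For (c), expand
\begin{equation*}
|(\id+A_t)^*\ptr_t^{-1}df|_{g^*}^2 = |\ptr_t^{-1}df|_{X_t}|_{g^*}^2 + 2\langle\ptr_t^{-1}df|_{X_t}, A_t^*\ptr_t^{-1}df|_{X_t}\rangle_{g^*}+|A_t^*\ptr_t^{-1}df|_{g^*}^2.
\end{equation*}
The last term is $o(t)$ by (b). The cross term equals $(\ptr_t^{-1}df|_{X_t})(A_t\sharp\ptr_t^{-1}df|_{X_t})$; since $A_t$ lands in $V_x$, the $df|_x$-part of $\ptr_t^{-1}df|_{X_t}$ vanishes against it, leaving only an $\eta_t$ contribution which is $O(t^{3/2})$ and hence $o(t)$. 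By the $g^*$-isometry of $\ptr_t$, the leading term equals $P_t(|df|_{g^*}^2)|_x$, so
\begin{equation*}
P_t(|df|_{g^*}^2)|_x-|dP_tf|_{g^*}^2|_x = \tfrac{t}{2}L|df|_{g^*}^2|_x - t\langle df,dLf\rangle_{g^*}|_x + O(t^2).
\end{equation*}
Applying Bochner--Weitzenb\"ock (Lemma~\ref{B-W-Formula}) at $x$, the terms $|\nabla df|^2$ and $\tr_H\nabla df(\mathbf{T}(\cdot,\sharp df))$ vanish because $\nabla df|_x=0$, and $\delta_H\mathbf{T}(\sharp df)\in V_x$ (since for $X,v\in\Gamma(H)$, each summand of $(\nabla_X\mathbf{T})(X,v)$ lies in $V$ by Proposition~\ref{pro:NablaGV}), so $df|_x(\delta_H\mathbf{T}(\sharp df))=0$. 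What remains is $t\langle\sharp df,\Ric\sharp df\rangle_g|_x+O(t^2)$, giving (c).

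The main obstacle I anticipate is the bookkeeping that identifies exactly which combinations of $A_t$, $Q_t$ and $\eta_t$ land in $H_x$ versus $V_x$, since this is what lets the hypotheses $\nabla df|_x=0$ and $df(V_x)=0$ conspire to suppress everything except the desired $\Ric$ contribution; in particular verifying $\delta_H\mathbf{T}(\sharp df)\in V_x$ is crucial to cancel the otherwise surviving torsion-divergence term in Bochner--Weitzenb\"ock.
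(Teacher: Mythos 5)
Your overall strategy --- short-time It\^o/Taylor expansions at $x$, using $\nabla df|_x=0$ to make the fluctuation $\eta_t$ of $\ptr_t^{-1}df|_{X_t}$ of order $t$ rather than $\sqrt t$, and $df(V_x)=0$ to annihilate everything that $A_t$ or $\delta_H\mathbf{T}$ pushes into $V_x$ --- is the same as the paper's, and your three-term expansion in (c) mirrors the paper's exactly. You diverge genuinely in (a): the paper computes $\E_x[\ptr_t^{-1}df|_{X_t}]$ by It\^o plus the Weitzenb\"ock identity and disposes of the $A_t^*$ contribution by noting that $Ldf|_V=dLf|_V$ makes $s\mapsto\langle \ptr_s^{-1}dP_{t-s}f|_{X_s},v\rangle$, $v\in V_x$, a martingale, whereas you feed $v=\sharp df$ into Corollary~\ref{cor-Dev-Int}(a) and read the Ricci term off $\id-Q_t=\tfrac t2\Ric|_x+o(t)$. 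That is legitimate (the corollary is proved independently of this lemma) and arguably cleaner; likewise your $L^4$ Cauchy--Schwarz argument for (b) is tidier than the paper's term-by-term It\^o expansion. The one analytic input you assert without proof is the family of moment bounds $\|\eta_t\|_{L^2},\|\eta_t\|_{L^4}=O(t)$ and $\|A_t\|_{L^4}=O(\sqrt t)$: these require control of $\E[d_{\bar g}(x,X_s)^p]$, and the paper devotes the first half of its proof to the exit-time estimate $\mathbb{P}(\sigma_r\leq t)\leq\exp(c_1/4-r^2/(8t))$ precisely to justify such statements; you should supply that localization.

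The substantive issue is in (c). Your computation correctly yields $t\langle\sharp df,\Ric\sharp df\rangle_g+o(t)$ for the numerator --- indeed Lemma~\ref{B-W-Formula} gives $\tfrac12 L|df|^2_{g^*}-\langle df,dLf\rangle_{g^*}=\langle\Ric^*df,df\rangle_{g^*}$ at $x$, the \emph{full} Ricci form --- and yet you conclude ``giving (c)'', although the statement carries a prefactor $\tfrac12$. This factor of two cannot be waved away. A check independent of Weitzenb\"ock: up to $o(t)$ the numerator of (c) equals $\langle \E\ptr_t^{-1}df+dP_tf,\,\E\ptr_t^{-1}df-dP_tf\rangle_{g^*}$ plus the variance of $\eta_t$ (which is $o(t)$ since $\nabla df|_x=0$), hence is asymptotically \emph{twice} the numerator of (a); so if (a) holds with the factor $\tfrac12$, then (c) cannot. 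The paper's own proof hides this by asserting $\tfrac12L|df|^2_{g^*}-\langle df,dLf\rangle_{g^*}=\tfrac12\langle\Ric^*df,df\rangle_{g^*}$, which contradicts its own Lemma~\ref{B-W-Formula}. So your argument, carried through consistently, proves (a) and (b) as stated but proves (c) with the constant $1$ in place of $\tfrac12$; before declaring (c) done you must either correct the constant in the statement or locate a compensating factor, and you should track how the change propagates into the step ``(v) $\Rightarrow$ \eqref{cur-cond}'' of Theorem~\ref{main-th2}, which is the only place Lemma~\ref{prop1} is used.
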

\begin{proof}
Let $ \nabla^Hf = \sharp df$ denote the horizontal gradient of a function $f$. Choose a taming Riemannian metric $\bar{g}$, i.e.~a Riemannian metric $\bar{g}$ such that $\bar{g}|_H = g$. We can always choose $\bar{g}$ so that $H$ and $V$ are orthogonal. We will use this taming metric to construct a relatively compact neighborhood of $t$ where we have reasonable estimates for the first exit time. If $\bar{\nabla} f$ is the gradient with respect to $\bar{g}$, we have that $\pr_H \bar{\nabla} f = \nabla^H f$. Let $d_{\bar{g}}$ be the Riemannian distance of $\bar{g}$. Choose sufficient small $r$ such that the ball $B_{\bar{g}}(x,r)$ of $\bar{g}$-radius $r$ centered at $x$ is outside the cut-locus of $x$.
 Let $\rho_t:=d_{\bar{g}}(x,X_t)$. Then
there exists a constant $c_1>0$ such that $Ld_{\bar{g}}^2(x,\cdot)\leq 2c_1$. By It\^{o}'s formula,
\begin{align*}
d \rho^2_t&=2\rho_t \langle \nabla^H \rho_t, \ptr_t d B_t \rangle_{g^*}
+ \frac{1}{2} L\rho^2_td t\\
&\leq 2\rho_t \langle \nabla^H\rho_t, \ptr_t d B_t \rangle_{g^*}
+c_1 dt,\quad  t<\sigma_r,
\end{align*}
where $\sigma_r= \inf\{t\geq 0: X_t \notin B_{\bar{g}}(x, r)\}$. For fixed $t>0$ and $\delta>0$, define
\begin{align*}
Z_s:=\exp{\left(\frac{\delta}{t}\rho_s^2-\frac{\delta }{t}c_1s-\frac{2\delta^2}{t^2}\int_0^s\rho_u^2 \ du \right)},\quad s\leq \sigma_r.
\end{align*}
Then as $|\nabla^H (d_{\bar{g}}(x,\cdot))|_{g} \leq | \bar{\nabla} (d_{\bar{g}}(x,\cdot))|_{\bar{g}} = 1$, we have
\begin{align*}
dZ_s&=\exp{\left(\frac{\delta}{t}\rho_s^2-\frac{\delta }{t}c_1s-\frac{2\delta^2}{t^2}\int_0^s\rho_u^2 \, du \right)}\\
 &\quad \times\left(\frac{2\delta}{t}\rho_s \langle\nabla^H\rho_s,\ptr_sd B_s \rangle_{g^*} -\frac{2\delta^2}{t^2}\rho_s^2\ d s+\frac{2\delta^2}{t^2}\rho_s^2|\nabla^H\rho_s|_{g^*}^2d s\right)\\
&\leq  \exp{\left(\frac{\delta}{t}\rho_s^2-\frac{\delta }{t}c_1s-\frac{2\delta^2}{t^2}\int_0^s\rho_u^2 \ du \right)} \left(\frac{2\delta}{t}\rho_s \langle\nabla^H \rho_s,\ptr_sd B_s \rangle_{g^*} \right),
\end{align*}
and hence $Z_s$ is a supermartingale. Therefore,
\begin{align*}
\mathbb{P}(\sigma_r\leq t)&=\mathbb{P}\left(\max_{s\in [0,t]}\rho_{s\wedge \sigma_r}\geq r\right)\\
&\leq \mathbb{P}\left(\max_{s\in [0,t]}Z_{s\wedge \sigma_r}\geq \e^{\delta r^2/t-\delta c_1-2\delta^2r^2/t}\right)\\
&\leq \exp \left(c_1\delta-\frac{1}{t}(\delta r^2-2\delta^2r^2)\right).
\end{align*}
If we take $\delta=1/4$, then
$$\mathbb{P}(\sigma_r\leq t)\leq \exp\left(\frac{1}{4}c_1-\frac{r^2}{8t}\right).$$
\begin{enumerate}[\rm (a)] 
\item 
By It\^o's formula and the estimate for $\sigma_r$,
  for small $t$ and $f\in C_0^{\infty}(M)$ such that
  $\nabla d f(x)=0$, we get
  \begin{align*}
    \E\big[\ptr_{t}^{-1}d f|_{X_t}\big]
    &= d f(x)+\E\left[\int_0^{t\wedge\sigma_r}\ptr_{s}^{-1} L (d f)|_{X_s}\, ds\right]+\text{\rm o}(t)\\
    &=d f|_x+ L (d f)|_x \,t+\text{\rm o}(t).
  \end{align*}
 From this equality,  the result
is easily derived using Taylor expansion:
  \begin{align*}
    &\big<d f, \E \ptr_{t}^{-1}df|_{X_t}\big>_{g^*}(x)-\left<d f, d P_tf\right>_{g^*}(x)\\
    &\quad=\big[\left<d f, Ld f\right>_{g^*}-\left<df, dLf\right>_{g^*}\big](x)t+\text{\rm o}(t)\\
    &\quad= \frac{1}{2} \langle \Ric^{*}df, df\rangle_g(x)\,t+\text{\rm o}(t),
  \end{align*}
where again we have used the Weitzenb\"ock identity and that $(\delta_H \mathbf{T})^* df|_x = 0$ since $df(V_x) = 0$.

Next, we observe from the Weitzenb\"ock identity that $L df| V = dLf|V$. Hence, the process $N_s$ in $T_x^* M$ given by $N_sv = \langle  \ptr_s^{-1} dP_{t-s} f|_{X_t}, v \rangle$ where $v \in V_x$, is a martingale. As a consequence, for any $v \in T_xM$,
\begin{align*}
& \mathbb{E}_x[\langle A_t^* \ptr_t^{-1} df|_{X_t}, v \rangle] = \mathbb{E}_x[\langle N_t, A_t v]\\
 &= \mathbb{E}_x\left[ \int_0^{t \wedge \sigma_r} \tr_H \langle  \nabla_{\ptr_s \times} dP_{t-s} f, \mathbf{T}(\ptr_s \times, \ptr_s v)  \rangle ds \right] + \text{o}(t),
\end{align*}
which is $\text{o}(t)$ since $\nabla df(x) = 0$.
\item Using that $\mathbb{E}[A_t^* \ptr_t^{-1} df|_{X_t} ] = \text{o}(t)$, that $df$ vanishes at $V_x$ and that $\nabla df =0$, we have
\begin{align*}
\qquad&\mathbb{E}_x\big[ | A_t^* \ptr_t^{-1} df|^2 \big]\\
& =  \mathbb{E}\left[ \int_0^{t\wedge \sigma_r}  \big|\ptr_s^{-1} \nabla_{\ptr_t ^{\mathstrut}\fatdot} df|_{X_s}( A_s \, \cdot \,) \big|^2_{g^* \otimes g^*} ds \right] + \mathbb{E}\left[ \int_0^{t\wedge \sigma_r}  \big|df \mathbf{T}(\ptr_s \,\cdot , \ptr_s \, \cdot) \big|^2 ds \right]  \\
& \qquad +2 \mathbb{E}\left[  \int_0^{t\wedge \sigma_r} \big\langle \nabla_{\ptr_s^{\mathstrut} \fatdot} df(A_s \,\cdot\,), df(\mathbf{T}(\ptr_s^{\mathstrut} \,\cdot ,  \ptr_s \,\cdot)) \big\rangle \right] + \text{o}(t^2) \\
& = \text{o}(t^2).
\end{align*}
As a consequence,
\begin{align*}
\lim_{t \downarrow 0} \frac{1}{t^2} \mathbb{E}\left[ |A_t^*\ptr_t^{-1} df|_{g^*}^2 \right] =0.
\end{align*}
\item
Since  $\nabla df(x) = 0$, we have $|\nabla df|_{g^* \otimes g^*}(x)=0$ and $$\tr_H \nabla_\times df(\mathbf{T}(\times, \sharp df))(x)=0.$$ By the Weitzenb\"ock formula in Proposition \ref{B-W-Formula},
\begin{align*}
\frac{1}{2}L|df|^2_{g^*}(x)-\left<d f, d Lf\right>_{g^*}(x)
&= \frac{1}{2} \langle{\rm Ric}^*(df),df\rangle_{g^*}(x).
\end{align*}
 Thus, by the Taylor expansions at the point $x$ (we drop $x$ below for simplicity):
\begin{align*}
\E_x\left[|\ptr_t^{-1}df(X_t)|^2_{g^*} \right]
& =|df|_{g^*}^2+ \frac{1}{2} L|df|_{g^*}^2\,t+{\rm o}(t),
\end{align*}
and
\begin{align*}
|dP_{t}f|_{g^*}^2&=|df|_{g^*}^2+\left<dLf, df\right>_{g^*} \, t+{\rm o}(t),\\
\end{align*}
we obtain 
\begin{align}\label{Ric-H-formula}
 \frac{1}{2} \langle \Ric^*df, df\rangle_{g^*}(x)
      &=\lim_{t\downarrow 0}\frac{\E_x\left[|\ptr_t^{-1}df|_{X_t}|^2_{g^*} \right]-|d P_{t}f|^2_{g^*}(x)}{t}.
\end{align}
On the other hand,
\begin{align*} 
\quad&\E_x\left[\big|(\id+A_t)^*\ptr_t^{-1}df|_{X_t}\big|_{g^*}^2\right] =\E_x\left[\big|(\id+A_t^*)\ptr_t^{-1}df|_{X_t}\big|_{g^*}^2\right]\\
&=\E_x\left[|\ptr_t^{-1}df|_{X_t}|^2_{g^*} \right]+2\E_x\langle \ptr_t^{-1}df||_{X_t}, A_t^*\ptr_t^{-1}df|_{X_t} \rangle+\E_x\left[|A_t^*\ptr_t^{-1}df|_{X_t}|^2\right]\\
&=\E_x\left[|\ptr_t^{-1}df(X_t)|^2_{g^*} \right]+2\E_x\langle \ptr_t^{-1}df(X_t)-dP_tf(x), A_t\ptr_t^{-1}df(X_t) \rangle\\
&\quad+\E_x\left[|A_t^*\ptr_t^{-1}df(X_t)|^2_{g^*}\right]\\
&=\E_x\left[|\ptr_t^{-1}df|_{X_t}|^2_{g^*} \right]+2\E_x\int_0^{t \wedge \sigma_r} \tr_H \langle  \nabla_{\ptr_s \times} dP_{t-s} f, \mathbf{T}(\ptr_s \times, \ptr_{t-s}^{-1}df|_{X_t}))  \rangle ds\\
&\quad+\E_x\left[|A_t^*\ptr_t^{-1}df|_{X_t}|^2_{g^*}\right] + \text{o}(t). 
\end{align*}
Since $\nabla d f=0$ and $df(V_x)=0$, then
\begin{align}\label{A-terms}
\lim_{t\downarrow 0}\left[\frac{2\E_x\int_0^{t\wedge \sigma_r} \tr_H \langle  \nabla_{\ptr_s \times} dP_{t-s} f, \mathbf{T}(\ptr_s \times, \ptr_{t-s}^{-1}df|_{X_t})  \rangle ds}{t} \right]=0.
\end{align}
Combining \eqref{A-terms}, (b) and \eqref{Ric-H-formula}, we finish the proof.
\qedhere
\end{enumerate}
\end{proof}

\begin{proof}[Proof of Theorem \ref{main-th2}, Part II]\ \\
We finish the remaining implications ``(iii)$\Rightarrow$(iv)$\Rightarrow$(ii)" and ``(v) $\Rightarrow$ \eqref{cur-cond}". \\

\paragraph{``(iv)$\Rightarrow$(ii)"}

By It\^{o}'s formula, we have
\begin{align}\label{PI-ineq}
&\E_x[\E_x[F|\mathscr{F}_t]^2]-\E_x[F]^2 =\int_0^t\E_x|D_s\E_x[F|\mathscr{F}_s]|^2\, ds\notag\\
&\leq \int_0^t\e^{\frac{K}{2}(T-s)}\left(\E_x|D_sF|_{g}^2+\frac{K}{2}\int_s^T\e^{\frac{K}{2}(r-s)}\E_x|D_rF|_{g}^2\, dr\right)\,ds.
\end{align}
For any function $F(\omega) = f(\omega_{t_1}, \dots, \omega_{t_n})$ with $t_1 >0$, the function $r\mapsto \E_x|D_rF|_{g}^2$
and $s\mapsto \E_x|D_s\E_x[F |\mathscr{F}_s]|^2$ are continuous at time $0$. Hence, we can divide both sides of \eqref{PI-ineq} by $t$ and take the limit as $t$ goes to $0$ to obtain (ii).

 If $t_1=0$, i.e. if $F(\omega)=f(x,\omega_{t_2},\ldots,\omega_{t_n})$, then
 $r\mapsto \E_x|D_rF|^2$ is not continuous at time $0$. We construct a family of functions
 $$F_{\varepsilon}(\omega)=f(\omega_\ve, \omega_{t_2},\ldots, \omega_{t_n})$$ for $0<\ve<t_2$.
 For this family of functions, $r\mapsto \E_x|D_rF_{\varepsilon}|^2$ is  continuous at time $0$ and
  \begin{align*}
   |d_x\mathbb{E}_x[F_{\varepsilon}]|_{g^*}^2\leq \e^{\frac{K}{2}T}\mathbb{E}_x\left[|D_0F_{\varepsilon}|_{g}^2+ \frac{K}{2}\int_0^T\e^{\frac{K}{2}s}|D_sF_{\varepsilon}|_{g}^2\ d s\right].
   \end{align*}
By considering the limit $\varepsilon \downarrow 0$, we prove (ii).

\

\paragraph{``(iii)$\Rightarrow$(iv)"} Applying the log-Sobolev inequality (iii) for $F^2=1+\ve G$, we have
\begin{align*}
      \quad&\E_x\Big[\E_x[(1+\ve G)|\mathscr{F}_{t}]\log \E_x[(1+\ve G)|\mathscr{F}_{t}]\Big]
             -\E_x[(1+\ve G)]\log \E_x[(1+\ve G)]\\
           &\leq 2\int_{0}^{t}\e^{\frac{K}{2}(T-r)}\left(\E_x| {D}_r\sqrt{1+\ve G}|^2_{g}+\frac{K}{2}\int_r^T\e^{\frac{K}{2}(s-r)}\E_x| {D}_s\sqrt{1+\ve G}|^2_{g}\ d s\right)\, d r.
    \end{align*}
Using the Taylor expansion at $\ve=0$, we have
\begin{align*}
  \quad&\E_x\Big[\ve \E[G|\mathscr{F}_t]+\frac{\ve^2}{2}\E[G|\mathscr{F}_t]^2\Big]-\Big[\ve\E_x(G)+\frac{\ve^2}{2}(\E_x G)^2\Big]+ \text{o}(\ve^2)\\
           &\leq 2\int_{0}^{t}\e^{\frac{K}{2}(T-r)}\left(\E_x| {D}_r\sqrt{1+\ve G}|^2_{g}+\frac{K}{2}\int_r^T\e^{\frac{K}{2}(s-r)}\E_x| {D}_s\sqrt{1+\ve G}|^2_{g}\ d s\right)\, d r\\
           &=\frac{1}{2}\int_{0}^{t}\e^{\frac{K}{2}(T-r)}\left(\ve^2\E_x|{D}_rG|^2_{g}+\frac{K}{2}\int_r^T\e^{\frac{K}{2}(s-r)}\ve^2\E_x| {D}_sG|^2_{g}\ d s\right)\, d r+\text{o}(\ve^2).
\end{align*}
Dividing both sides with $\ve^2$  and letting $\ve\rightarrow 0$, we then obtain (iv).

\
%
%
\

\paragraph{``(v)$\Rightarrow$\eqref{cur-cond}"}
For any point $x \in M$ and any $\alpha \in H_x^*$ with $\alpha(V_x) =0$, we choose a function $f:M \to \mathbb{R}$ such that
$$df(x) = \alpha, \quad \nabla df(x) = 0.$$
We note that then $\langle (\delta_H\mathbf{T})^*(df), df\rangle(x)=0$. Observe also that the inequalities of (v) are equivalent to:
\begin{align*}
&\frac{|dP_tf|_{g^*}^2 -\E_x\left[|(\id+A_t)^*\ptr_t^{-1}df(X_t)|_{g^*}^2 \right]}{t}\\
 & \leq \frac{(\e^{Kt}-1)}{t}\E_x\left[|(\id+A_t)^*\ptr_t^{-1}df(X_t)|_{g^*}^2 \right] \\ \nonumber
& \qquad  +\frac{K}{2t} \E\left[ \int_0^t\e^{\frac{K}{2}s}|(\id+A_t-A_s)^*\ptr_t^{-1}df(X_t)|_{g^*}^2ds\right],
\end{align*}
and
\begin{align*}
&\frac{4(1-\e^{\frac{K}{2}t})}{t}|df|_{g^*}^2(x)-\frac{4(1-\e^{\frac{K}{2}t})}{t}\E_x\langle df(x), (\id+A_t)^*\ptr_t^{-1}df(X_t)\rangle\\
&-\frac{4\langle df(x), dP_tf(x) \rangle-4\E_x\langle df(x), (\id+A_t)^*\ptr_t^{-1}df(X_t)\rangle}{t}\\
&+\frac{|dP_tf|^2-\e^{\frac{K}{2}t}\E_x|(\id+A_t)^*\ptr_t^{-1}df(X_t)|_{g^*}^2}{t} \notag\\
  &\quad \leq \frac{K}{2t}\E_x\left[\int_0^t\e^{\frac{K}{2}(t+s)}|(\id+A_t-A_s)^*\ptr^{-1}_tdf(X_t)|_{g^*}^2\, ds\right].
\end{align*}
Letting $t$ tend to $0$ and using Lemma \ref{prop1}, we obtain
\begin{equation*}
-K|\sharp \alpha|^2_g(x)\leq \langle \sharp \alpha , \Ric \sharp \alpha \rangle_g (x)\leq K|\sharp \alpha|^2_g(x).\qedhere
\end{equation*}
\end{proof}


\subsection{The Ornstein-Uhlenbeck operator}
For cylindrical functions $F, G \in \FCc$ define a bilinear form
\begin{align*}
\mathscr{E}(F,G)&= \mathbb{E} \langle  DF,  DG \rangle_\mathbb{H}=\mathbb{E}\int_0^T \langle  D_tF,  D_t G \rangle_{g} \,d t.
\end{align*}
Then $(\mathscr{E}, \FCc)$ is a positive bilinear form on $L^2(W^T_x;\mathbb{P}^{x,T})$. It is standard that the integration by parts formula in Theorem~\ref{IBPD} implies closability of the form (see e.g. the argument in \cite[Lemma 4.3.1.]{Wbook2}). We shall use $(\mathscr{E},\Dom(\mathscr{E}))$ to denote the closure of $(\mathscr{E},\FCc)$. Let $(\mathscr{L}, \Dom(\mathscr{E}))$ be the analogue of the Ornstein-Uhlenbeck operator as the generator of the Dirichlet
form $\mathscr{E}$. Let $\gap(\mathscr{L})$ denote the spectral gap of the Ornstein-Uhlenbeck operator.  The following is then a consequence of Theorem~\ref{main-th2}.
\begin{corollary}\label{cor}
Assume that there exists  some non-negative  constant $K$ such that
\begin{align*}
-K \leq \Ric_H \leq K.
\end{align*}
  Then
\begin{enumerate}[\rm (a)]
 \item for any $F\in \Dom(\mathscr{E})$ with $\mathbb{E}_x[F]=0$,
  \begin{align*}
  \mathbb{E}_x(F^2)\leq \frac{1}{2}(\e^{KT}+1)\, \mathscr{E}_x(F,F);
  \end{align*}
  \item for any $F\in \Dom(\mathscr{E})$ with $\mathbb{E}_x[F^2]=1$,
  \begin{align*}
  \mathbb{E}_x(F^2\log F^2)\leq (\e^{KT}+1)\, \mathscr{E}_x(F,F);
  \end{align*}
 \item the spectral gap has the following estimate:
 \begin{align*}
 {\rm gap}(\mathscr{L})^{-1}\leq \frac{1}{2}(\e^{KT}+1).
 \end{align*}
\end{enumerate}
\end{corollary}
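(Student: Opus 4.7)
The plan is to derive (a) and (b) by specializing the path-space Poincaré and log-Sobolev inequalities of Theorem~\ref{main-th2}(iv) and (iii) to $t = T$, and then to bound the resulting double integral by the Dirichlet form $\mathscr{E}(F,F)$ via a Fubini-type rearrangement. Part (c) will follow from (a) by the standard equivalence between a Poincaré inequality and the spectral gap of the Dirichlet form generator.

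First I would handle (a). Take $t = T$ in Theorem~\ref{main-th2}(iv). Then $\mathbb{E}_x[F|\mathscr{F}_T] = F$ and by assumption $\mathbb{E}_x[F] = 0$, so the left-hand side equals $\mathbb{E}_x[F^2]$. For the right-hand side, swap the order of integration in the inner double integral:
\begin{align*}
\frac{K}{2}\int_0^T \int_r^T e^{\frac{K}{2}(T-r)}e^{\frac{K}{2}(s-r)}\mathbb{E}_x|D_sF|_g^2\,ds\,dr
&= \frac{K}{2}\int_0^T e^{\frac{K}{2}(T+s)}\mathbb{E}_x|D_sF|_g^2 \int_0^s e^{-Kr}\,dr\,ds\\
&= \tfrac12 \int_0^T \mathbb{E}_x|D_sF|_g^2\left(e^{\frac{K}{2}(T+s)} - e^{\frac{K}{2}(T-s)}\right)ds.
\end{align*}
Adding the single integral $\int_0^T e^{\frac{K}{2}(T-r)}\mathbb{E}_x|D_rF|_g^2\,dr$ gives the clean expression
\[
\mathbb{E}_x[F^2] \;\leq\; \int_0^T \varphi(s)\,\mathbb{E}_x|D_sF|_g^2\,ds, \qquad \varphi(s):=\tfrac12\bigl(e^{\frac{K}{2}(T+s)} + e^{\frac{K}{2}(T-s)}\bigr).
\]
Since $K \geq 0$, the map $s \mapsto \varphi(s)$ is non-decreasing on $[0,T]$ (its derivative is $\frac{K}{4}(e^{\frac{K}{2}(T+s)} - e^{\frac{K}{2}(T-s)}) \geq 0$), so $\varphi(s) \leq \varphi(T) = \tfrac12(e^{KT}+1)$. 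Hence the right-hand side is at most $\tfrac12(e^{KT}+1)\,\mathbb{E}_x\!\int_0^T |D_sF|_g^2\,ds = \tfrac12(e^{KT}+1)\,\mathscr{E}_x(F,F)$, giving (a) for $F \in \mathscr{F}C_0^\infty$. A density argument using the closability of $(\mathscr{E},\mathscr{F}C_0^\infty)$ extends the bound to all $F \in \Dom(\mathscr{E})$.

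For (b), I would run the identical computation on Theorem~\ref{main-th2}(iii) at $t = T$. Because $\mathbb{E}_x[F^2] = 1$, the left-hand side collapses to $\mathbb{E}_x[F^2 \log F^2]$, while the right-hand side differs from the Poincaré version only by an overall factor of $2$; the same rearrangement and monotonicity of $\varphi$ therefore yield $\mathbb{E}_x[F^2\log F^2] \leq (e^{KT}+1)\,\mathscr{E}_x(F,F)$, again extended by density. Finally, (c) is immediate: by the variational characterization of $\gap(\mathscr{L})$ as the best constant in $\mathbb{E}_x[(F - \mathbb{E}_x F)^2] \leq \gap(\mathscr{L})^{-1}\,\mathscr{E}_x(F,F)$, part (a) forces $\gap(\mathscr{L})^{-1} \leq \tfrac12(e^{KT}+1)$.

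The only mildly delicate step is the Fubini exchange and the verification that the resulting kernel $\varphi(s)$ attains its maximum on $[0,T]$ at the right endpoint; everything else is a routine reduction from the path-space inequalities already established in Theorem~\ref{main-th2}, together with a standard closure/density argument for passing from $\mathscr{F}C_0^\infty$ to $\Dom(\mathscr{E})$.
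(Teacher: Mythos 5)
Your proposal is correct and follows essentially the same route as the paper: specialize Theorem~\ref{main-th2}(iv) and (iii) at $t=T$, rearrange the double integral to get the kernel $\tfrac12\bigl(\e^{\frac{K}{2}(T+s)}+\e^{\frac{K}{2}(T-s)}\bigr)$, bound it by its value at $s=T$, and read off (c) from the variational characterization of the spectral gap. The only difference is that you spell out the Fubini step, the monotonicity of the kernel, and the density extension to $\Dom(\mathscr{E})$, which the paper leaves implicit.
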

\begin{proof}
The inequalities in (a) and (b) are derived by using Theorem \ref{main-th2} (iii) and (iv) with $t=T$:
\begin{align*}
\E_x[F^2]-(\E_x[F])^2&\leq \int_0^T\e^{\frac{K}{2}(T-t)}\mathbb{E}_x\left[|D_tF|_{g}^2+ \frac{K}{2}\int_t^T\e^{\frac{K}{2}(s-t)}|D_sF|_{g}^2\ d s\right] dt\\
&\leq  \frac{1}{2}\ \mathbb{E}_x\left[\int_0^T(\e^{\frac{K}{2}(T+t)}+\e^{\frac{K}{2}(T-t)})|D_tF|_{g}^2\ d t\right]\\
&\leq \frac{1}{2}(\e^{KT}+1)\mathbb{E}_x\left[\int_0^T|D_tF|_{g}^2\ d t\right].
\end{align*}
The estimate in (c) is derived according to the definition of the spectral gap.
\end{proof}


\section{On the geometry of path space and complements} \label{sec:Geometry}
\subsection{Integrable complements}
Let $(M,H,g)$ be a sub-Riemannian manifold and let $V$ be a metric preserving complement that is also Frobenius integrable, i.e. $[V,V] \subseteq V$. Let $\Phi$ be the corresponding foliation of $V$. Then $M/\Phi$ locally has the structure of a Riemannian manifold. More precisely, any $x \in M$ has a neighborhood $U$ such that $\pi_U: U \to U/\Phi|_U$ is a submersion of differentiable manifolds. Since $V$ is metric preserving, there exists a Riemannian metric $\check{g}_U$ on $U/\Phi_U$ such that
$$\langle v, w \rangle_g = \langle (\pi_U)_* v, (\pi_U)_* w \rangle_{\check{g}_U}, \quad v,w \in H_x,\ x \in U.$$

Consider the special case when $\Phi$ is a regular foliation, i.e. when $\check{M} = M /\Phi$ is a differentiable manifold. Write the corresponding projection as $\pi: M \to \check{M}$. Let $\check{g}$ be the corresponding complete Riemannian metric on $\check{M}$. For the sake of simplicity, we assume that $(M, H, g)$ is complete, which implies that $(\check{M}, \check{g})$ is complete, as this is a distance decreasing map. Write $\check{\nabla}$ for the Levi-Civita connection of $\check{g}$. Let $x \in M$ be a given point with $\check{x} = \pi(x)$. Let $W_{x,H}^\infty$ and $W^\infty_{\check{x}}$ be the space of smooth curves with domain 
$[0, \infty)$ 
starting at $x$ and $\check{x}$, respectively, where the curves starting at $x$ are required to be horizontal. Then since $H$ is an Ehresmann connection on $\pi$, curves starting at $\check{x}$ have unique horizontal lifts to $x$. Hence, the map $W_{x,H}^\infty \to W^\infty_{\check{x}}$, $\omega \mapsto \pi(\omega)$, is a bijection.

Next, let $hY$ denote the horizontal lift of a vector field $Y$ on $\check{M}$, that is $hY$ is the unique section of $H$ satisfying $\pi_* hY = Y$. We can then describe the connection $\nabla = \nabla^{g,V}$ as
\begin{equation} \label{NablaCheckNabla} \nabla_{hY} hY_2 = h \check{\nabla}_{Y} Y_2, \quad \nabla_Z hY =0, \quad \nabla_{hY}Z= [hY,Z],\end{equation}
for $Y, Y_2 \in \Gamma(T\check{M})$, $Z \in \Gamma(V)$. Hence, if $\Dev_{\check{x}}: T_{\check{x}} \check{M} \to \check{M}$ is the development map of $\check{\nabla}$, then we have the following commutative diagram:
$$\xymatrix{W_0^\infty(H_x) \ar[r]^-{\Dev_x} \ar[d]_{\pi_{*,x} } & W_{x,H}^\infty \ar[d]^\pi \\ W_0^\infty(T_{\check{x}} \check{M}) \ar[r]_-{\Dev_{\check{x}}} & W_{\check{x}}^\infty }$$
with every map in the diagram being a bijection.

Going from smooth curves to continuous curves, the concept of horizontal curves will no longer be well defined. However, if $\check{B}_t^{\check{x}}$ is the standard Brownian motion in $T_{\check{x}}\check{M}$ and $\check{X}_t^{\check{x}}$ denotes the Brownian motion in $\check{M}$, then we can still make sense of the following diagram
$$\xymatrix{B_t^x \ar@{|->}[r]^-{\Dev_x} \ar@{|->}[d]_{\pi_{*,x} } & X_t^x \ar@{|->}[d]^\pi \\  \check{B}_t^{\check{x}} \ar@{|->}[r]_-{\Dev_{\check{x}}} & \check{X}^{\check{x}}_t },$$

We finally note that by \eqref{NablaCheckNabla} we have $\Ric_H = \pi^* \check{\Ric}|_H$ where $\check{\Ric}$ denotes the Ricci operator on $\check{M}$. In summary, if we consider the path space $W_x(M)$ with the probability distribution given by the sub-Riemannian Brownian motion, then, viewed from the connection $\nabla = \nabla^{g,V}$, the path space has a geometry similar to that of the path space of $M/\exp(V)$. See \cite{GT16a} for more details.

\subsection{An instructive example}
Let $(M^{(1)},g^{(1)})$ and $(M^{(2)}, g^{(2)})$ be two oriented Riemannian manifolds, both of dimension $n$. Let $\SO(TM^{(1)})$ and $\SO(TM^{(2)})$ be the oriented orthonormal frame bundles. With respect to the diagonal action of $\SO(n)$ on $\SO(TM^{(1)}) \times \SO(T M^{(2)})$, we define
$$\mathbf{M} = (\SO(TM^{(1)}) \times \SO(TM^{(2)}))/\SO(n).$$
We can consider elements $q \in \mathbb{M}$ as linear isometries $q: T_{x^{(1)}}M^{(1)} \to T_{x^{(2)}} M^{(1)}$ where $(\varphi^{(1)}, \varphi^{(2)})\cdot \SO(n)$, $\varphi^{(1)} \in \SO(TM)_x$, $\varphi^{(2)} \in \SO(T M)_{x^{(2)}}$ can be identified with $q = \varphi^{(2)} \circ (\varphi^{(1)})^{-1}$.
Define $\pi^{(1)}: \mathbf{M} \to M^{(1)}$ and $\pi^{(2)}: \mathbf{M} \to M^{(2)}$ such that $q: T_{x^{(1)}} M^{(1)} \to T_{x^{(2)}} M^{(2)}$ is mapped to~$x^{(1)}$ and~$x^{(2)}$, respectively. We can then define a subbundle $H \subseteq T\mathbf{M}$ by
$$H = \left\{ \dot q_t \,\colon\, \begin{array}{l} \text{$\pi^{(1)}_* \dot q_t = \pi^{(2)}_* \dot q_t$,}\\ \text{$q_t Y_t$ is a parallel vector field for every parallel $Y_t$ along $\pi(q_t)$}\end{array}\right\}.$$
Then $H$ is an Ehresmann connection on both $\pi^{(1)}$ and $\pi^{(2)}$.  Furthermore, for any element $v \in H$, we have
$$|\pi^{(1)}_* v|^2_{g^{(1)}} = |\pi^{(2)}_* v |^2_{g^{(2)}} = : |v|^2_{\bf g}.$$
Consider the sub-Riemannian manifold $(\mathbf{M},H,\mathbf{g})$. This corresponds to the optimal control problem of rolling $M^{(1)}$ on $M^{(2)}$ without twisting or slipping along a minimizing curve. For more details and conditions for $H$ being bracket-generating see e.g. \cite{CK10,Gro12,Gro16}.

Consider the choices of complement $V^{(1)} = \ker \pi^{(1)}_*$ and $V^{(2)} = \ker \pi^{(2)}_*$. Both of $V^{(1)}$ and $V^{(2)}$ are metric preserving complements by definition. If $\nabla^{(1)} = \nabla^{\mathbf{g},V^{(1)}}$ and $\nabla^{(2)} = \nabla^{\mathbf{g},V^{(2)}}$ are the corresponding compatible connections with horizontal Ricci operator $\Ric_H^{(1)}$ and $\Ric_H^{(2)}$ respectively, then we have that $\Ric_H^{(1)} = (\pi^{(1)})^* \Ric_{g^{(1)}}$ and $\Ric^{(2)}_H = (\pi^{(2)})^* \Ric_{g^{(2)}}$, where $\Ric_{g^{(1)}}$ and $\Ric_{g^{(2)}}$ are the respective Ricci curvatures of $g^{(1)}$ and $g^{(2)}$. This illustrates that our formalism for path space of sub-Riemannian manifolds really depends on the choice of complementary subbundle.

\subsection{A non-integrable complement}\
We will include an example from~\cite{BeGr19}. Consider the Lie algebra $\mathfrak{so}(4)$. If $e_1, \dots, e_4$ is the standard basis of $\mathbb{R}^4$, we write $e_{ij} \in \mathfrak{so}(4)$ for the matrices satisfying $e_{ij}e_k = \delta_{ik} e_j - \delta_{jk} e_i$. Consider the inner product on $\mathfrak{so}(4)$ given by $\langle Y_1, Y_2\rangle = - \frac{1}{2} \tr Y_1Y_2$. Consider an orthogonal decomposition $\mathfrak{so}(4) = \mathfrak{h} \oplus \mathfrak{v}$ where $\mathfrak{v} =\spn \{ e_{12}, e_{23}\}$. On $\SO(4)$, define subbundles $T\SO(4) = H \oplus V$ where $H$ and $V$ are respective left translations of $\mathfrak{h}$ and $\mathfrak{v}$. We define a sub-Riemannian metric $g$ by left translation of the restriction of inner product of $\so(4)$ to $\mathfrak{h}$.

The subbundle $V$ is not integrable, but it is metric preserving from the bi-invariance of the inner product on $\so(4)$. Furthermore, if we define $\nabla = \nabla^{g,V}$, then
$$\frac{1}{2} \leq \Ric_H \leq 2.$$
See \cite[Example~3.1]{BeGr19} for detailed calculations.

\subsection{How to understand the curvature bounds} Let $(M,H,g)$ be a given sub-Riemannian manifold. As the above calculations show, our curvature bounds will in general depend on the choice of complement $V$ which determines the connection $\nabla = \nabla^{g,V}$. This dependence can be understood in the following way. Firstly, our connection sub-Laplacian $L = \tr_H \nabla^2_{\times,\times}$ depends on the choice of complement, and hence the same is true for the underlying diffusion $X_t^x$. See e.g. \cite{GoLa16,GT16a} for more details relating connections and sub-Laplacians. Furthermore, even for complements that define the same sub-Laplacian $L$,  the derivatives $\frac{d}{ds} \Dev(B_{\fatdot}^x - s k_{\fatdot})|_{s=0}$ on cylindrical functions will differ. In this sense, the curvature $\Ric_H$ can be seen as a curvature of the development map.

\appendix
\section{General geometric formulas} \label{sec:Any}
In most of our previous result, we restricted ourselves to sub-Riemannian manifolds $(M,H,g)$ equipped with a choice of metric preserving complement $V$. In this appendix, we include formulas without this assumption to point out additional complications that exist in general and for the benefit of future research.
\subsection{Weizenb\"ock formulas}
Let $(M,H,g)$ be a given sub-Riemannian manifold. Let $\nabla$ be an arbitrary connection with torsion $\mathbf{T}$ and curvature $\mathbf{R}$. Write
$$\delta_H \mathbf{T}(Z) = - \tr_H (\nabla_\times \mathbf{T})(\times ,Z), \quad \Ric(Z) = - \tr_H \mathbf{R}(\times, Z) \times.$$
Assume that $H$ is parallel with respect to $H$, and hence $\nabla g$ is well defined. For any vector field $Z \in \Gamma(TM)$, define $q_Z\colon H \to H$ by the formula
\begin{equation} \label{qmap} \langle q_Z v_1, v_2 \rangle_g = \frac{1}{2} (\nabla_Z g)(v_1, v_2), \quad v_1, v_2 \in H.\end{equation}
We note that $Z \mapsto q_Z$ is a tensorial map, so we can consider $q \in \Gamma(T^*M \otimes \End H)$ as a tensor.

\begin{lemma}[Weitzenb\"ock formula] \label{lemma:Weitz}
Let $\hat\nabla$ denote the adjoint connection of $\nabla$ as in \eqref{Adjoint}.
Write
$$L = \tr_H \nabla^2_{\times, \times} \quad \text{and} \quad \hat L = \tr_H \hat \nabla_{\times, \times}^2 $$
for the Laplacians on tensors. Then, for any function $f \in C^\infty(M)$, we have
\begin{align} \label{GenWeitzNabla}
L df(Z) - dLf(Z)& =  - 2 \tr_H \nabla_\times df(\mathbf{T}(\times, Z) - q_Z \times) \\ \nonumber
& \quad + df(\Ric(Z) + \delta_H \mathbf{T}(Z) - \tr_H \mathbf{T}(\times, \mathbf{T}(\times, Z))); \\
\hat L df(Z) - dLf(Z)& =  2 \tr_H \nabla_\times df( q_Z \times) + df(\Ric(Z) ) \label{GenWeitzHatNabla}.
\end{align}
\end{lemma}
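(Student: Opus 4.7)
The lemma is a pointwise identity, so fix $x \in M$ and work in a local $g$-orthonormal frame $X_1,\dots,X_d$ of $H$. Since $H$ is parallel under $\nabla$, every $\nabla_Z X_i$ lies in $\Gamma(H)$, and the only coupling between $\nabla$ and $g|_H$ is
\begin{equation*}
g(\nabla_Z X_i, X_j) + g(X_i, \nabla_Z X_j) = -(\nabla_Z g)(X_i, X_j) = -2\,\langle q_Z X_i, X_j\rangle_g,
\end{equation*}
which will be the sole source of the $q_Z$-contributions below.

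For \eqref{GenWeitzNabla}, the plan is to expand both $(Ldf)(Z) = \sum_i(\nabla^2_{X_i,X_i}df)(Z)$ and $(dLf)(Z) = Z\bigl(\sum_i(\nabla^2 f)(X_i,X_i)\bigr)$ and subtract. Using the short identity $(\nabla^2_{X,Y}df)(W) = (\nabla_X\nabla^2 f)(Y,W)$, the difference splits into a \emph{commutator} piece $\sum_i[(\nabla_{X_i}\nabla^2 f)(X_i,Z) - (\nabla_Z\nabla^2 f)(X_i,X_i)]$ and a \emph{frame-rotation} piece $-\sum_i[\nabla^2 f(\nabla_Z X_i, X_i) + \nabla^2 f(X_i, \nabla_Z X_i)]$. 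The commutator piece is reduced by (i) swapping the last two Hessian arguments via the antisymmetry $\nabla^2 f(Y,W) - \nabla^2 f(W,Y) = -df(\mathbf{T}(Y,W))$, whose covariant derivative supplies $df(\delta_H\mathbf{T}(Z))$ together with a first copy of $-\tr_H\nabla_\times df(\mathbf{T}(\times,Z))$, and (ii) commuting the first two slots with the standard identity
\begin{equation*}
(\nabla_X\nabla^2 f)(Y,W) - (\nabla_Y\nabla^2 f)(X,W) = -df(\mathbf{R}(X,Y)W) - (\nabla_{\mathbf{T}(X,Y)}df)(W),
\end{equation*}
which after summation produces $df(\Ric(Z))$ from $\sum_i\mathbf{R}(X_i,Z)X_i = -\Ric(Z)$ and a residual $\sum_i(\nabla_{\mathbf{T}(X_i,Z)}df)(X_i)$; the residual is again symmetrized by (i) to yield a second $-\tr_H\nabla_\times df(\mathbf{T}(\times,Z))$ and the quadratic torsion correction $-df(\tr_H\mathbf{T}(\times,\mathbf{T}(\times,Z)))$. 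The frame-rotation piece reduces, upon expanding $\nabla_Z X_i$ in the orthonormal frame, to $-2\tr_H\nabla_\times df(q_Z\times)$: the skew part of the matrix $g(\nabla_Z X_i,X_j)$ annihilates both the symmetric Hessian and the antisymmetric $df\circ\mathbf{T}$, while its symmetric part equals $-g(q_Z X_i,X_j)$ by the displayed identity. Adding everything gives \eqref{GenWeitzNabla}.

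For \eqref{GenWeitzHatNabla}, the slickest route is to apply \eqref{GenWeitzNabla} with $\nabla$ replaced by $\hat\nabla$. Because $\mathbf{T}(X_i,X_i)=0$, the function Laplacians coincide, $\hat L f = L f$, so $d\hat L f = dLf$ on the left. On the right, $\hat{\mathbf{T}}=-\mathbf{T}$, and direct calculation expresses $\hat\Ric$, $\delta_H^{\hat\nabla}\hat{\mathbf{T}}$ and $\hat q_Z$ in terms of $\Ric$, $\nabla\mathbf{T}$, $\mathbf{T}\circ\mathbf{T}$ and $q_Z$ (for instance $(\hat\nabla_Z g)(v_1,v_2) = (\nabla_Z g)(v_1,v_2) + g(\mathbf{T}(Z,v_1),v_2) + g(v_1,\mathbf{T}(Z,v_2))$ relates $\hat q_Z$ to $q_Z$ plus a symmetrized torsion term). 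The resulting combinations collapse: every torsion-only and torsion-squared contribution from \eqref{GenWeitzNabla} applied to $\hat\nabla$ is absorbed by terms produced via these conversion formulas, the coefficient of the $q_Z$-piece flips sign, and only $df(\Ric(Z))$ and $+2\tr_H\nabla_\times df(q_Z\times)$ survive.

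The main technical obstacle is the bookkeeping of torsion terms, which appear from multiple sources in both identities and must regroup exactly into the canonical blocks $\Ric$, $\delta_H\mathbf{T}$, $\tr_H\mathbf{T}(\times,\mathbf{T}(\times,Z))$ and $\tr_H\nabla_\times df(\mathbf{T}(\times,Z))$. A convenient sanity check at every stage is the Riemannian case $\mathbf{T}=0$, $q=0$, in which both formulas collapse to the classical Bochner identity $Ldf(Z) - dLf(Z) = df(\Ric(Z))$.
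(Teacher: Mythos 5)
Your treatment of \eqref{GenWeitzNabla} is essentially the paper's argument (orthonormal frame chosen suitably at a point, swap the Hessian slots via the torsion, Ricci identity in the first two slots, absorb the frame derivative into $q_Z$), and the bookkeeping you describe does regroup into the stated blocks. One slip: the frame-rotation piece as you defined it, \emph{including} its leading minus sign, evaluates to $+2\tr_H \nabla_\times df(q_Z \times)$, not $-2\tr_H \nabla_\times df(q_Z \times)$ --- the minus coming from the symmetric part $-g(q_Z X_i, X_j)$ and the minus in front of the sum cancel --- and it is this $+$ sign that matches the $-q_Z\times$ sitting inside the trace in \eqref{GenWeitzNabla}. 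As written, your pieces do not add up to the stated identity.

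The genuine gap is in your derivation of \eqref{GenWeitzHatNabla}. You cannot simply ``apply \eqref{GenWeitzNabla} with $\nabla$ replaced by $\hat\nabla$'': that identity is proved under the standing hypothesis that $H$ is parallel for the connection, and $H$ is never $\hat\nabla$-parallel in the genuinely sub-Riemannian case, since $\hat\nabla_Y Z = \nabla_Y Z - \mathbf{T}(Y,Z)$ and $\mathbf{T}(Y,Z)$ cannot always lie in $H$ when $H$ is bracket-generating and proper. Consequently $\hat\nabla g$, and hence $\hat q_Z$, is not defined (the metric $g$ lives only on $H$), and your proposed conversion formula already invokes $g(\mathbf{T}(Z,v_1),v_2)$ with $\mathbf{T}(Z,v_1)$ possibly outside $H$. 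The asserted wholesale cancellation of all torsion contributions is precisely the content that has to be proved, and it is not demonstrated. The paper sidesteps all of this by a direct two-line computation: for $Y \in \Gamma(H)$ with $\nabla Y(x) = 0$ one has
$$\hat\nabla^2_{Y,Y} df(Z) = (\nabla^2_{Y,Y} df)(Z) + 2(\nabla_Y df)(\mathbf{T}(Y,Z)) + df\bigl((\nabla_Y \mathbf{T})(Y,Z)\bigr) + df\bigl(\mathbf{T}(Y,\mathbf{T}(Y,Z))\bigr),$$
and tracing over $H$ the added terms cancel exactly the torsion terms of \eqref{GenWeitzNabla}, leaving $2\tr_H \nabla_\times df(q_Z\times) + df(\Ric(Z))$. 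You should replace your second paragraph by this computation.
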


\begin{proof}
For a given point $x$ and any elements $v \in H_x$ and $w \in T_xM$, choose arbitrary vector fields $Y \in \Gamma(H)$, $Z \in \Gamma(TM)$ such that $Y(x) =v$, $Z(x) = w$, $\nabla Y(x) = 0$ and $\nabla Z(x) = 0$. We remark that this is possible since we assumed that $H$ was parallel with respect to $\nabla$. Then at $x\in M$,
\begin{align*}
& (\nabla^2_{Y,Y} df)(Z)  = Y \nabla_Y df(Z) = Y \nabla_Z df(Y) + Y df(\mathbf{T}(Z,Y)) \\
& = (\nabla_{Y,Z}^2 df)(Y) - (\nabla_Y df)(\mathbf{T}(Y,Z)) - df( (\nabla_Y \mathbf{T})(Y,Z)) \\
& = (\nabla_{Z,Y}^2 df)(Y) + (\mathbf{R}(Y,Z) df)(A) - (\nabla_{\mathbf{T}(Y,Z)} df)(Y) \\
& \qquad - (\nabla_Y df)(\mathbf{T}(Y,Z)) - df( (\nabla_Y \mathbf{T})(Y,Z)) \\
& = (\nabla_{Z,Y}^2 df)(Y) - df(\mathbf{R}(Y,Z)Y)  \\
& \qquad - 2 (\nabla_Y df)(\mathbf{T}(Y,Z)) - df( (\nabla_Y \mathbf{T})(Y,Z) + \mathbf{T}(Y,\mathbf{T}(Y,Z))).
\end{align*}
Next, let us insert an orthonormal basis $Y_1, \dots, Y_k$ of $H$. We can choose this orthonormal basis such that $\nabla_Z Y_i(x) = q_Z Y_i(x)$ for some given point $x$. Evaluated at $x \in M$, we have
\begin{align*}
(\nabla_{Z,Y_i} df)(Y_i) & = Z ( \nabla_{Y_i} df)(Y_i) - (\nabla_{q_Z Y_i} df)(Y_i) - (\nabla_{Y_i} df)(q_Z Y_i) .
\end{align*}
Summing over this basis and using the symmetry of $q_Z$ gives us \eqref{GenWeitzNabla}. The result in \eqref{GenWeitzHatNabla} then follows from the identity
\begin{align*}
\hat \nabla_{Y,Y} df(Z) &= (\nabla_{Y,Y}^2 df)(Z) + 2 (\nabla_Y  df)(\mathbf{T}(Y,Z)) \\
& \qquad  + df((\nabla_Y \mathbf{T})(Y,Z)) +    df(\mathbf{T}(Y,\mathbf{T}(Y,Z))) .
\end{align*}
\end{proof}

\subsection{The smooth horizontal path space seen from an arbitrary complement} \label{sec:HorPathSmoothAny}
Let $(M, H, g)$ be a complete sub-Riemannian manifold and let $V$ be an arbitrary choice of complement. Let $\nabla = \nabla^{g,V}$ be the corresponding connection horizontally compatible with $(H,g)$ and with torsion $\mathbf{T}$ and curvature $\mathbf{R}$. Define the development map $\Dev$ relative to this connection. For any $Z \in \Gamma(TM)$, define $q_Z$ as in \eqref{qmap} and note that $q_{Z} = q_{\pr_V Z}$ since the connection i horizontally compatible. We note the following result.
\begin{lemma} \label{lemma:SmoothPathSpaceAny}
Let $t \mapsto \omega_t$ be a smooth horizontal curve with $u = \Dev^{-1}(\omega) \in W_0^\infty(H_x)$. Define $A_t = A_t^\omega \colon T_x M \to T_x M$ by $A_t = \int_0^t \mathbf{T}_{\ptr_s}(du_s, \, \cdot \,)$. Consider $\omega^s_t = \Dev(u+ sk)_t$ and define $Y_t = \frac{d}{ds} \omega^s_t |_{s=0}$. Then $Y_t = \ptr_t y_t = \hptr_t \hat y_t$ with
$$y_t = h_t + \int_0^t dA_s h_s, \quad \hat y_t = h_t - \int_0^t A_s dh_s.$$
where $h_t = \pr_H y_t$ is the solution of
\begin{align}\label{k-h-R}
k_t & = h_t  - \int_0^t\int_0^s \mathbf{R}_{\ptr_r} \left( du_r , h_r \right) du_s \notag\\
& \qquad  -\frac{1}{2} \int_0^t\int_0^s \left( (\nabla_{du_r} q)_{\ptr_r, \int_0^{r} dA_{r_2} h_{r_2}} du_s + (\nabla_{du_s} q)_{\ptr_r, \int_0^{r} dA_{r_2} h_{r_2}} du_r \right)\notag \\
& \qquad + \frac{1}{2}\int_0^t\int_0^s \sharp \left\langle (\nabla_{\!\fatdot} q)_{\ptr_r, \int_0^{r} dA_{r_2} h_{r_2}} du_r, du_s \right\rangle_g. \end{align}
\end{lemma}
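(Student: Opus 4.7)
The starting point is Lemma~\ref{lemma:SmoothPathSpace_0} applied to the connection $\nabla = \nabla^{g,V}$, which gives
\begin{equation*}
k_t = y_t + \int_0^t \mathbf{T}_{\ptr_s}(y_s, du_s) - \int_0^t\int_0^s \mathbf{R}_{\ptr_r}(du_r, y_r)\, du_s.
\end{equation*}
The plan is to extract the equations for $h_t = \pr_H y_t$ and $v_t = \pr_V y_t$ by projecting this identity onto $H_x$ and $V_x$ separately. The structural properties of $\nabla^{g,V}$ from Proposition~\ref{pro:NablaGV}, namely $\mathbf{T}(H,V) = 0$, $\mathbf{T}(H,H) \subseteq V$ and the $\nabla$-parallelism of both $H$ and $V$, are the only inputs needed for the splitting.

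For the vertical projection, antisymmetry of the torsion together with $\mathbf{T}(H,V) = 0$ collapses $\mathbf{T}_{\ptr_s}(y_s, du_s)$ to $-\mathbf{T}_{\ptr_s}(du_s, h_s)$, which lies in $V_x$ with integral $-\int_0^t dA_s h_s$; meanwhile the curvature term is horizontal because $H$ is $\nabla$-parallel. Since $k_t \in H_x$, projecting onto $V_x$ yields $v_t = \int_0^t dA_s h_s$, giving the stated expression for $y_t$. The formula for $\hat y_t$ then follows from $\hat y_t = \hptr_t^{-1}\ptr_t\, y_t = (\id - A_t) y_t$, where $\ptr_t^{-1}\hptr_t = \id + A_t$ is derived exactly as in Lemma~\ref{lemma:Conversion} (whose argument uses only $\hat\nabla - \nabla = -\mathbf{T}$ and the nilpotency $A_t^2 = 0$, not the metric-preserving hypothesis), combined with $A_t|_{V_x} = 0$ and the integration-by-parts identity $A_t h_t = \int_0^t dA_s h_s + \int_0^t A_s\, dh_s$.

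Projecting onto $H_x$ kills the torsion term and, after substituting $y_r = h_r + \int_0^r dA_{r_2} h_{r_2}$, leaves
\begin{equation*}
k_t = h_t - \int_0^t\int_0^s \mathbf{R}_{\ptr_r}(du_r, h_r)\, du_s - \int_0^t\int_0^s \mathbf{R}_{\ptr_r}\Big(du_r,\, \textstyle\int_0^r dA_{r_2} h_{r_2}\Big) du_s.
\end{equation*}
The main obstacle, and the only place where the correction $q$ genuinely enters, is to rewrite the last double integral. Specifically, I need the tensorial identity
\begin{equation*}
\mathbf{R}_{\ptr_r}(X, Z) X' = \tfrac{1}{2}(\nabla_X q)_{\ptr_r, Z}\, X' + \tfrac{1}{2}(\nabla_{X'} q)_{\ptr_r, Z}\, X - \tfrac{1}{2}\sharp\bigl\langle (\nabla_{\fatdot}\, q)_{\ptr_r, Z}\, X, X'\bigr\rangle_g
\end{equation*}
for $X, X' \in H_x$ and $Z \in V_x$. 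The symmetric-in-$(X,X')$ part is produced by applying the second covariant derivative commutator $\nabla^2_{X,Z} - \nabla^2_{Z,X} = \mathbf{R}(X,Z) + \nabla_{\mathbf{T}(X,Z)}$ (acting as a derivation) to the horizontal cometric $g^H$, and exploiting horizontal compatibility $(\nabla_{X'} g^H)|_{H\times H} = 0$ along with the defining identity $(\nabla_Z g^H)|_{H\times H} = 2\langle q_Z\,\cdot\,,\,\cdot\,\rangle_g$; the inhomogeneous term vanishes because $\mathbf{T}(H,V) = 0$. The antisymmetric-in-$(X,X')$ part is obtained from the first Bianchi identity with torsion applied to the cyclic triple $(X, X', Z)$, whose pure-torsion contributions again drop out by Proposition~\ref{pro:NablaGV}, followed by dualization via $\sharp$. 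Feeding this identity with $X = \dot u_r$, $X' = \dot u_s$, $Z = \int_0^r dA_{r_2} h_{r_2}$ into the remaining double integral then reproduces exactly \eqref{k-h-R}. The main technical delicacy is the bookkeeping of the torsion contributions in the Bianchi step and confirming that no residual terms survive the projection back to $H_x$.
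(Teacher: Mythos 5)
Your proposal is correct and follows essentially the same route as the paper's own proof: project the identity of Lemma~\ref{lemma:SmoothPathSpace_0} onto $V_x$ and $H_x$ using the structural properties of $\nabla^{g,V}$, solve $\ptr_t^{-1}\hptr_t = \id + A_t$ to get $\hat y_t$, and convert the residual curvature term $\mathbf{R}(\,\cdot\,,Z)\,\cdot$ with $Z \in V$ into $\nabla q$-terms by combining the action of the curvature on $g$ with the first Bianchi identity, exactly as in the paper's derivation of the identity $2S_z(v_1,v_2,v_3) = \langle (\nabla_{v_1} q)_z v_2, v_3 \rangle_g + \langle (\nabla_{v_2} q)_z v_1, v_3 \rangle_g - \langle (\nabla_{v_3} q)_z v_1, v_2 \rangle_g$. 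The ``technical delicacy'' you flag in the Bianchi step (the cyclic torsion terms, in particular the one involving $\mathbf{T}(\mathbf{T}(v_2,v_1),z)$ with both arguments in $V$) is present in the paper's argument as well and is handled there in the same way.
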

\begin{proof}
Write $Y_t = \ptr_t y_t$, $\pr_H y_t = h_t$. Observe that from Lemma~\ref{lemma:SmoothPathSpace_0}, we must have
$$0 = \pr_V k_t = \pr_V y_t - \int_0^t dA_s h_s.$$
Then
$$y_t=\pr_Hy_t+\pr_Vy_t=h_t+\int_0^t dA_s h_s.$$
Furthermore, we have that
$$d(\ptr_t^{-1} \hptr_t) =  \ptr_t^{-1} \mathbf{T}(\ptr_t d u_t, \hptr_t \, \cdot \, ).$$
The solution of this equation is $\ptr_t^{-1} \hptr_t = \id + A_t$ and $\hptr_t^{-1} \ptr_t = \id- A_t$, since $A_t$ vanishes on $V$. As a consequence
$$\hat y_t = \hptr_t^{-1} \ptr_t y_t = h_t + \int_0^t dA_s h_s - A_t h_t = h_t - \int_0^t A_s dh_s. $$
Finally, we will prove \eqref{k-h-R} by first observing that
\begin{align}\label{k-h-R-1}
k_t&=h_t-\int_0^t\int_0^s\mathbf{R}_{\ptr_r}(du_r, y_r) du_s\notag\\
&=h_t-\int_0^t\int_0^s\mathbf{R}_{\ptr_r}(du_r, h_r) du_s-\int_0^t\int_0^s\mathbf{R}_{\ptr_r}(du_r, \pr_Vy_r) du_s.
\end{align}
Note that for arbitrary $z \in V_x$ and $v_1, v_2, v_3 \in H_x$, we have that
\begin{align*}
(\mathbf{R}(v_1, z)g)(v_2,v_3) &=2 \langle (\nabla_{v_1} q)_z v_2, v_3 \rangle_g = \langle \mathbf{R}(v_1, z) v_2, v_3 \rangle_g + \langle v_2, \mathbf{R}(v_1, z) v_3 \rangle_g,
\end{align*}
and hence from the first Bianchi identity
\begin{align*}
\langle \mathbf{R}(v_1, z) v_2, v_3 \rangle_g  &  = \langle \circlearrowright \mathbf{R}(v_1, z) v_2, v_2 \rangle_g  + \langle \mathbf{R}(v_2, z) v_1, v_3 \rangle_g \\
& = \langle \circlearrowright (\nabla_{v_1} \mathbf{T})( z, v_2) +  \circlearrowright \mathbf{T}(\mathbf{T}(v_1,z) v_2), v_3 \rangle_g  + \langle \mathbf{R}(v_2, z) v_1, v_3 \rangle_g \\
 &=  \langle \mathbf{R}(v_2, z) v_1, v_3 \rangle_g .
\end{align*}
Define $S_z(v_1, v_2, v_3) = \langle  \mathbf{R}(v_1, z) v_2 , v_3 \rangle_g$. We conclude that
\begin{align*}
\langle (\nabla_{v_1}q)_z v_2, v_3 \rangle_g & = S_z(v_1, v_2, v_3) + S_z(v_1, v_3, v_2), \\
0 & = S_z(v_1, v_2, v_3) - S_z (v_2, v_1, v_3).
\end{align*}
Considering
\begin{align*}
S_z(v_1, v_2, v_3) - S_z(v_2, v_1, v_3) & =0, \\
S_z(v_2, v_3, v_1) - S_z(v_3, v_2, v_1) & =0, \\
S_z(v_3, v_1, v_2) - S_z(v_1, v_3, v_2) & =0,
\end{align*}
and subtracting the second line from the sum of the first and the third, we obtain
\begin{align*}
S_z(v_1, v_2, v_3) - S_z(v_1, v_3, v_2) - \langle (\nabla_{v_2} q_z) v_1, v_3 \rangle + \langle (\nabla_{v_3} q)_z v_2, v_1 \rangle_g & =0.
\end{align*}
In conclusion
$$2S_z(v_1, v_2, v_3) = \langle (\nabla_{v_1} q)_z v_2, v_3 \rangle_g + \langle (\nabla_{v_2} q)_z v_1, v_3 \rangle_g - \langle (\nabla_{v_3} q)_z v_1, v_2 \rangle_g.$$
Combining this with the formula \eqref{k-h-R-1}, we prove \eqref{k-h-R}.
\end{proof}

\bibliographystyle{habbrv}
\bibliography{Bibliography}

\end{document}